\newtheorem{theorem}{Theorem}%
\newtheorem{proposition}[theorem]{Proposition}% 
\newtheorem{lemma}[theorem]{Lemma}
\newtheorem{conjecture}[theorem]{Conjecture}
\newtheorem{corollary}[theorem]{Corollary}
\newtheorem{example}{Example}%
\newtheorem{remark}{Remark}%
\newtheorem{definition}{Definition}%
\def\rar{\rightarrow}
\def\mc{\mathcal}
\begin{document}
\title[Structures of Monoids Motivated by DNA Origami]{Structures of Monoids Motivated by DNA Origami}

\author
{\fnm{Peter} \sur{Alspaugh}}

\author
{\fnm{James} \sur{Garrett}}

\author
{\fnm{Nata\v sa} \sur{Jonoska}}

\author
{\fnm{Masahico} \sur{Saito}}

\affil
{\orgdiv{Department of Mathematics and Statistics}, \orgname{University of South Florida}, \orgaddress{\street{4202 E. Fowler Av.}, \city{Tampa}, \postcode{33620}, \state{FL}, \country{USA}}}

\abstract{We construct a class of monoids, called {\it origami monoids},  motivated by Jones monoids and by strand organization in DNA origami structures. Two types of basic building blocks of DNA origami closely associated with the graphical representation of Jones monoids are identified and are taken as generators for the origami monoid. Motivated by plausible modifications of the DNA origami structures and the relations of the well studied Jones monoids, we then identify a set of relations that characterize the origami monoid. These relations expand the relations of the Jones monoids and include a new set of relations called {\it contextual commutation}. With contextual commutation, certain generators commute only when found within a given context. 
	 We prove that the origami monoids are finite and  propose 
	 a normal form representation of their elements.
	 We establish a correspondence 
     between the Green's classes of the origami monoid and the Green's classes of a direct product of Jones monoids.}

\keywords{DNA origami,  Jones monoid, rewriting system, origami monoid, contextual commutation.}

\maketitle              

\section{Introduction}\label{intro-sec}

DNA origami, introduced by Rothemund \cite{rothemund-folding-2006} in 2006,
typically involves combining a
single-stranded cyclic viral molecule called a {\it scaffold}  with 200-250 short
{\it staple strands} to produce about  100{\it nm} diameter 2D shapes~\cite{rothemund-folding-2006}.
Fig.~\ref{origami_clean} shows a schematic of an origami structure,
where the thick black line
represents the scaffold, and the colored lines represent the staple strands that keep the scaffold folded in the shape.
Although success of DNA origami assemblies has been 
achieved experimentally (e.g.,~\cite{HPND11,Marchi2014,HQMW17,Bathe-hex}), 
theoretical understanding of strand organization within those shapes and characterizations of these structures are still lacking.
As algebraic systems that model DNA origami structures, monoids called {\it origami} monoids 
have been proposed in \cite{GarrettJKS19},  and  several alternative systems are suggested in \cite{NACO-Garrett2021}.
With this paper, we extend  results 
and prove some of the conjectures
stated in  \cite{GarrettJKS19}, thereby providing a better understanding of the origami monoids as algebraic structures.  
We prove the finiteness of the origami monoids, 
propose a normal form,
and study their 
Green's classes.

\begin{wrapfigure}{r}{0.42\textwidth}
 \vskip -.5cm
 \centering
 \includegraphics[scale=.3]{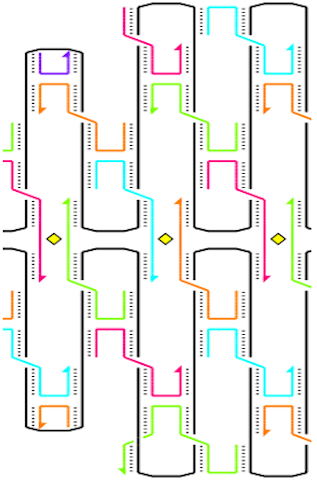}
 \captionsetup{width=\linewidth}
 \captionof{figure}{\small{A schematics  of DNA origami structure with scaffold in black and staples in color (edited from~\cite{rothemund-folding-2006}).}}.\label{origami_clean} \vskip-1cm
\end{wrapfigure}
More specifically, to model DNA origami structures,  we divide the general origami
structure (Fig.~\ref{origami_clean}) into small building blocks consisting of local scaffold-staple interactions,  and such building blocks are taken as   generators of an origami monoid.
 This approach is closely related to the diagrammatic description of generators in the
{\it Jones monoid} ~\cite{BDP,Jones83,East3} which is 
a monoid variant of the well studied Temperley-Lieb algebra~\cite{temperley}.  Temperley-Lieb algebras and Jones monoids have been  studied in physics and knot theory, in particular in relation to  the celebrated {\it Jones polynomial} of knots.
 The number of
generators
of an origami monoid ${\cal O}_n$ depends on the number $n$ of parallel folds of the scaffold in the DNA origami structure.
 The  correspondence between diagrams and monoid generators follows that of  the Jones monoid ${\cal J}_n$ of $n$ strings (Fig.~\ref{lieb1}). 
A DNA origami structure can be associated to  an element of an origami monoid, 
and 
a set of relations of the monoid 
 are derived in such a way that they are plausible for DNA segments to conform in DNA origami, as well as motivated from the set of  relations of Jones monoids.

\vskip -10pt

\begin{figure}[h]
	\centering
	\includegraphics[scale=0.45]{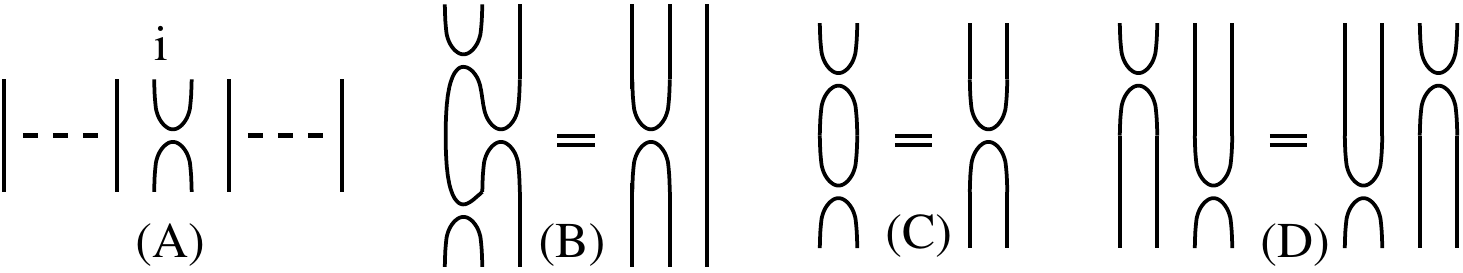}
	\caption{(A) The generators $h_i$ of $\mc J_n$ and relations (B,C,D) of the Jones monoid $\mc J_n$
		\cite{temperley}. 
	}
	\label{lieb1}\vskip 15pt
\end{figure}

\vskip -.5cm
Although the presentation of the origami monoids is motivated by DNA origami structures, our method of obtaining 
origami monoids
${\cal O}_n$  from
the Jones monoid ${\cal J}_n$
can be regarded as a general algebraic construction and can be applied to other algebraic structures.  
The set of generators is doubled into two types; to each generator $h_i$ (Fig.~\ref{lieb1} (A))
of ${\cal J}_n$, we assign two generators $\alpha_i$ and $\beta_i$ (Fig.~\ref{generators2}) of ${\cal O}_n$, $i=1, \ldots, n-1$. 
The original set of relations of the Jones monoid is imposed for each type of generators.
For example, for the idempotence rule $h_i^2=h_i$ for ${\cal J}_n$ (Fig.~\ref{lieb1} (C)), 
we impose idempotence on $\alpha_i$ and $\beta_i$, respectively. In addition, 
we impose idempotence on the products $\alpha_i \beta_i$, $\beta_i \alpha_i$, as well as the commutation of two types of generators with distinct indices reflecting the commutation in $\mc J_n$ (Fig.~\ref{lieb1}(D)). Algebraically,  this  definition can thus be seen as a construction of new monoids from old ones  by doubling  generators and expanding the relations
on all generators and products of pairs of respective generators. 

We propose a normal form for the origami monoids,
utilizing normal forms of the Jones monoids~\cite{BDP}.
For  $n=3, 4$, as computed by  a {\it GAP} program~\cite{GAP4}, we show that the number of elements according to our proposed normal form agrees with the origami monoid sizes.
In general, we prove that ${\cal O}_n$ is finite 
for all $n$.
The orders of ${\cal O}_n$
computed by ${\it GAP}$,
in fact, form an integer sequence that is not found in the {\it Online Encyclopedia of Integer Sequences (OEIS) }~\cite{OEIS}. 
Although the sizes of $\mc O_n$ are much larger than those of $\mc J_n$, we show a natural relationship between the Green's relations of an origami monoid and those of a direct product of Jones monoids~\cite{IdemP}.
In particular, we show that 
there is a bijective correspondence of Green's $\mc D$ classes between $\mc O_n$ and ${\cal J}_n \times {\cal J}_n$.

The paper is organized as follows. Background 
materials
are reviewed and notation is established in Section~\ref{sec:prelim}.
A definition of the origami monoids and motivations from DNA origami structures are explained in Section~\ref{sec:monoid}.
Two aspects of monoid structures of ${\cal O}_n$ are studied. A normal form 
representation for elements in $\mc O_n$ is proposed in Section~\ref{sec:normal}, while a bijective correspondence of Greens classes between 
${\cal O}_n$ and $ {\cal J}_n\times {\cal J}_n$  and $\mathcal{H}$-triviality of $\mathcal{O}_n$ are shown in Section~\ref{sec:Greens}.

%%%%%%%%%%%%%%%%%%%%%%%%%%%%%%%%%%%%%%%%%%%%%
\section{Preliminaries}\label{sec:prelim}
%%%%%%%%%%%%%%%%%%%%%%%%%%%%%%%%%%%%%%%%%%%%

\subsection{Jones monoids}

The Jones monoid~\cite{IdemP,BDP,LF} $\mathcal J_n$ is
a monoid version of the Temperley-Lieb algebra \cite{temperley,Kauff}) that has been used in many fields, particularly in physics and  knot theory, 
and is defined with generators and relations as follows~\cite{BDP,LF}. 
The monoid $\mathcal J_n$  is generated by $h_i$, $i=1, \dots, n-1$, and has relations
$$
(B) \,\, h_ih_jh_i = h_i\ \text{ for } \ |i-j|=1,\
(C) \,\, (h_i)^2 =  h_i \ 
\
(D) \,\, h_ih_j =  h_jh_i \ \text{ for } \ |i-j|\geq2.
$$
The elements of $\mathcal J_n$ may be represented as planar diagrams with non-crossing lines connecting $n$ points at the top and $n$ points at the bottom of the diagrams. 
The diagram for the generator $h_i$ is depicted in Fig.~\ref{lieb1} (A) \cite{LF}. For each $h_i$, parallel vertical lines connect the top $j$th and bottom $j$th points ($j\not = i, i+1$) of the diagram for all but the $i$th and $(i{+}1)$st points, while the top $i$th and $(i{+}1)$st points are connected, and the bottom $i$th and $(i{+}1)$st points are connected. Multiplication of two elements is represented   by concatenation of diagrams by placing the diagram of the first element on top of the second and removing  closed loops. Diagramatic representations of the monoid  relations are depicted  in Fig.~\ref{lieb1} (B), (C) and (D). More details can be found in~\cite{BDP,LF}. An alternate presentation for $\mathcal{J}_n$ is found in \cite{East4}.

%%%%%%%%%%%%%%%%%%%%%%%%%
\subsection{String rewriting systems}\label{sec:string}
%%%%%%%%%%%%%%%%%%%%%%%%%

An alphabet $\Sigma$ is a non-empty, finite set of symbols. 
A word over $\Sigma$ is a finite sequence of elements (symbols) from $\Sigma$, and $\Sigma^*$ is the set of all words over $\Sigma$.
This set includes the empty string, the word containing no symbols, which is often written as $1$. A word $u$ is called a {\it factor of a word $v$} if there exist words $x$ and $y$, which may be empty, such that $v = xuy$. Note that this is also sometimes referred to as a subword.

A string rewriting system $(\Sigma,R)$ consists of an alphabet $\Sigma$ and a set of rewriting rules $R$ which is a binary relation on $\Sigma^*$. An element $(x,y)$ of $R$ is called a rewriting rule and is written $x \rightarrow y$.

If there is a sequence of words $u=x_1 \rightarrow x_2 \rightarrow \cdots \rightarrow x_n=v$ in a rewriting system
$(\Sigma^*, R)$, we will simply write $u \rightarrow v$.
An element $x \in \Sigma^*$  is {\it confluent} if for all $y, z \in \Sigma^*$ such that
$x \rightarrow y$ and $x \rightarrow z$, there exists $w \in \Sigma^*$
such that $y \rightarrow w$ and $z \rightarrow w$. If all words in $\Sigma^*$ 
are confluent,
then $(\Sigma^*, R)$ is called {\it confluent}. In particular, if $R$ is symmetric, then the system $(\Sigma^*, R)$ is confluent.

%%%%%%%%%%%%%%%%%%%%%%%%%%%%%%%%%%%%%%%%%%
\subsection{Monoids and  Green's relations}
%%%%%%%%%%%%%%%%%%%%%%%%%%%%%%%%%%%%%%%%%

A monoid is a pair $(M,\cdot)$ where $M$ is a set and $\cdot$ is an 
associative binary operation on $M$ that has an identity element $1$. The set $\Sigma^*$
is a (free) monoid generated by  $\Sigma$ 
with word concatenation as the binary operation and the empty string as the identity element.
Presentations of monoids are defined from the free monoid in a manner similar to presentations of groups.
Rewriting systems define monoid relations by taking the equivalence closure of the rewriting rules, which makes the rewriting system confluent. 

For a monoid $M$, the {\it principal left (resp. right) ideal} generated by  $a \in M$  is defined by $Ma=\{ xa\ | \ x \in M\}$ (resp. $aM$), and the {\it principal two-sided ideal} is
$MaM$. Green's relations $\mathscr{L}$, $\mathscr{R}$, and $\mathscr{J}$ are defined for $a, b \in M$ by
$a \mathscr{L} b$ if $Ma=Mb$, $a \mathscr{R} b $ if $aM=bM$ and $a \mathscr{J} b$ if $MaM=MbM$.
Green's $\mathscr{H}$ relation is defined by $a \mathscr{H} b$ if $a \mathscr{L} b$ and $a \mathscr{R} b$. 
Green's $\mathscr{D}$ relation is defined by $a \mathscr{D} b$ if there is $c$ such that $a \mathscr{L} c $ and $c \mathscr{R} b$. The equivalence classes of $\mathscr{L}$ are called $\mathscr{L}$-classes, and similarly for the other relations. In a finite monoid, $\mathscr{D}$ and $\mathscr{J}$ coincide. 
The $\mathscr{D}$-classes can be represented in a matrix form called {\it egg boxes}, where the rows represent $\mathscr{R}$-classes, columns $\mathscr{L}$-classes, 
and each entry is 
an $\mathscr{H}$-class. 
See \cite{Pin} for more details.

\medskip
\begin{example}\label{ex:J3}
	{\rm
		\begin{sloppypar}
		In \cite{IdemP}, $\mathscr{D}$-classes are obtained for Jones and related monoids. $\mathcal{J}_3$ has two $\mathcal{D}$-classes. One contains only the identity element and the other is given by the $(2 \times 2)$-matrix below. Each cell in the matrix is a singleton $\mathscr H$-class containing only the indicated element.
		\[
		\left[
		\begin{array}{cc}
		h_1 & h_1 h_2 \\
		h_2 h_1 & h_2 
		\end{array}
		\right]
		\]
		The rows   $\{ h_1 , h_1 h_2 \} $, $\{ h_2 h_1, h_2 \}$, are the  $\mathscr{R}$-classes 
		and  columns  $\{ h_1 , h_2 h_1 \} $, $\{ h_1 h_2, h_2 \}$ are the $\mathscr{L}$-classes in this $\mathscr D$-class.  
		For instance, we see that multiplying $h_1$ and $h_2h_1$ by $h_i$ to the right 
		gives rise to the same right ideal. 
\end{sloppypar}
}
\end{example}

%%%%%%%%%%%%%%%%%%%%%%%%%%%%%%%%%%%%%
\section{Origami monoid $\mc O_n$}\label{sec:monoid}
%%%%%%%%%%%%%%%%%%%%%%%%%%%%%%%%%%%%%

In this section, we recall the definition of 
origami monoids
from \cite{GarrettJKS19}, and briefly relate them to 
  DNA origami structures.  
A more in depth discussion of the material in this section may be found in \cite{GarrettJKS19}.

%%%%%%%%%%%%%%%%%%%%%%%%%%%%%%%%%%%%%
\subsection{Generators}\label{sec:gen}
%%%%%%%%%%%%%%%%%%%%%%%%%%%%%%%%%%%%%

In DNA origami structures,  repeated patterns of 
simple building blocks whose concatenation builds  a larger structure can be observed, as in  Fig.~\ref{origami_clean}.
One type of these patterns is a cross-over by the staple strands, and the other is a cross-over of the scaffold strand.
Thus, a natural approach of describing DNA origami 
structures symbolically is to associate generators  of an algebraic system to 
simple building blocks  in which either a strand or staple crossing occurs, and to take multiplication in the system to be presented as 
concatenation of the blocks.

For a positive integer $n$, we define a monoid ${\mathcal O}_n$,  
where $n$ represents the number of vertical double stranded DNA strands, or equivalently, the number  of parallel folds of the scaffold. For the structure in Fig.~\ref{origami_clean}, observe that $n=6$. The generators of ${\mathcal O}_n$ are denoted by $\alpha_i$, which correspond to an antiparallel {\it{staple}} strand cross-over between strings $i$ and $i+1$ and $\beta_i$, which correspond to an antiparallel {\it{scaffold}} strand cross-over between strings $i$ and $i+1$ for $i=1, \ldots, n-1$,
as depicted in Fig.~\ref{generators2}.
The subscript  $i$ represents the position of the left scaffold corresponding to $\alpha_i$ and $\beta_i$, respectively, by starting at 1 from the leftmost scaffold strand fold and counting 
right (Fig.~\ref{context}).

\begin{figure}[h!]
    \begin{minipage}{.3\textwidth}
		\centering
		\includegraphics[scale=.2]{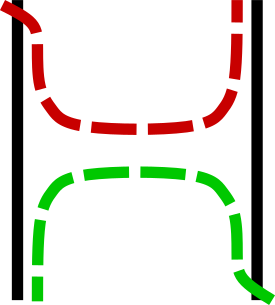}
		\subcaption{$\alpha_i$}
		\label{alpha2}
    \end{minipage}
	\begin{minipage}[h]{.3\textwidth}
		\centering
		\includegraphics[scale=.2]{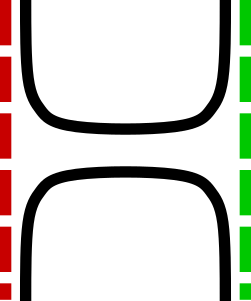}
		\subcaption{$\beta_i$}
		\label{beta2}
	\end{minipage}%	
	\begin{minipage}{.35\textwidth}
	    \centering
	\includegraphics[scale=.2]{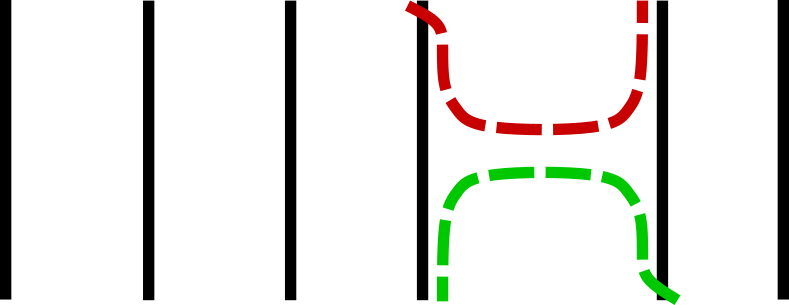}
	\subcaption{$\alpha_4$ in the context of a 6-fold stranded structure}
	\label{context}
	\end{minipage}
\caption{The generators identified}\label{generators2}	\vskip-.5cm
\end{figure}
Fig. \ref{context} shows a diagram corresponding to $\alpha_4$ as an example of the ``full picture'' of one of these generators. For the sake of brevity, 
we neglect to draw the extra scaffold and staple strands in most diagrams, but it may be helpful to imagine them when we describe their concatenation. In addition, we often use $\alpha_i$ and $\beta_i$ to refer to the corresponding diagrams.
In Fig.~\ref{context}, parallel scaffolds in generator diagrams are missing counterpart parallel staples.

%%%%%%%%%%%%%%%%%%%%%%%%%%%%%%%%%%%%%%
\subsection{Concatenation as a monoid operation}\label{concat}
%%%%%%%%%%%%%%%%%%%%%%%%%%%%%%%%%%%%%

For a natural number $ n\geq2$, the set of generators of the monoid $\mc O_n$ is 
the set $\Sigma_{n}=\{\alpha_{1},\alpha_{2},\dots,\alpha_{n-1},\beta_{1},\beta_{2},\dots,\beta_{n-1}\}$.
For a product of two generators $x_i$ and $y_j$
in $\Sigma_n$,
we place the diagram of the  first generator above the second, lining up and connecting the scaffold strings of the two generators.
 If the two generators are adjacent, that is, if for indices $i$ and $j$ 
it holds that $|i-j|\leq1$, then we also connect their staples as described below. Otherwise, if $|i-j|\ge 2$, no staple connection is performed and the concatenation 
is finished.

We define a convention of connecting staples for adjacent generators, which is motivated by the manner in which
staples connect in Fig.~\ref{origami_clean}. 
Note how the staples of $\alpha$-type protrude ``outside" of the scaffold in the top-left and bottom-right corners of the diagram in Fig.~\ref{alpha2}. We refer to these ends of a staple as ``extending staple-ends'', and all other staple ends as ``non-extending staple-ends''. 
We connect staples everywhere \textit{except} when two non-extending staple-ends would have to cross a scaffold to connect (recall that the scaffold strands are connected first), as can be seen in Fig.~\ref{alphabeta} and Fig.~\ref{combinationgroup6}. As a result of this convention, $\alpha_i \beta_i$ and $\beta_i \alpha_i$ are two distinct words, i.e.,  $\alpha$'s and $\beta$'s do not freely commute.
Note that concatenation of three or more generators is associative because once the diagrams are placed on top of each other according to the concatenation order of the generators, the order of connecting the scaffold strands is not relevant, i.e., 
  can be done in an associative manner.

\begin{figure}[h!]
	\centering
	\begin{minipage}[h]{.33\textwidth}
		\centering
		\includegraphics[scale=.12]{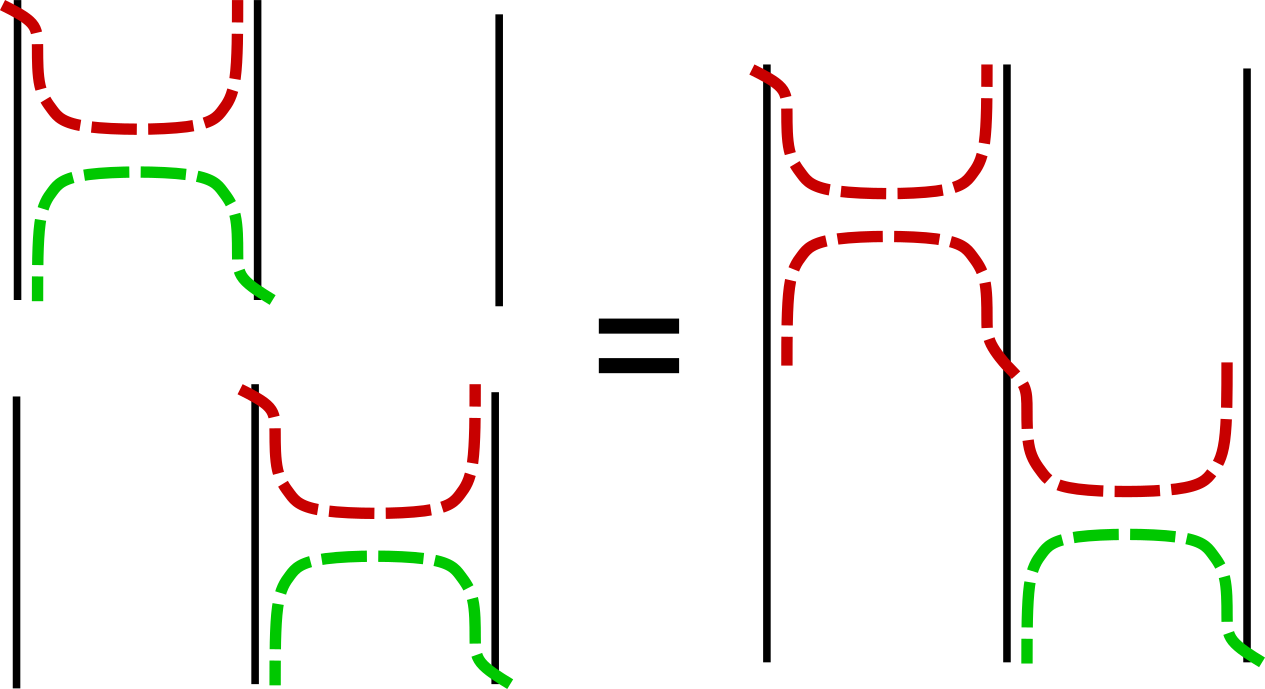}
		\subcaption{$\alpha_i\alpha_{i+1}$}
		\label{concatexample1}
	\end{minipage}%
	\begin{minipage}[h]{.33\textwidth}
		\centering
		\includegraphics[scale=.12]{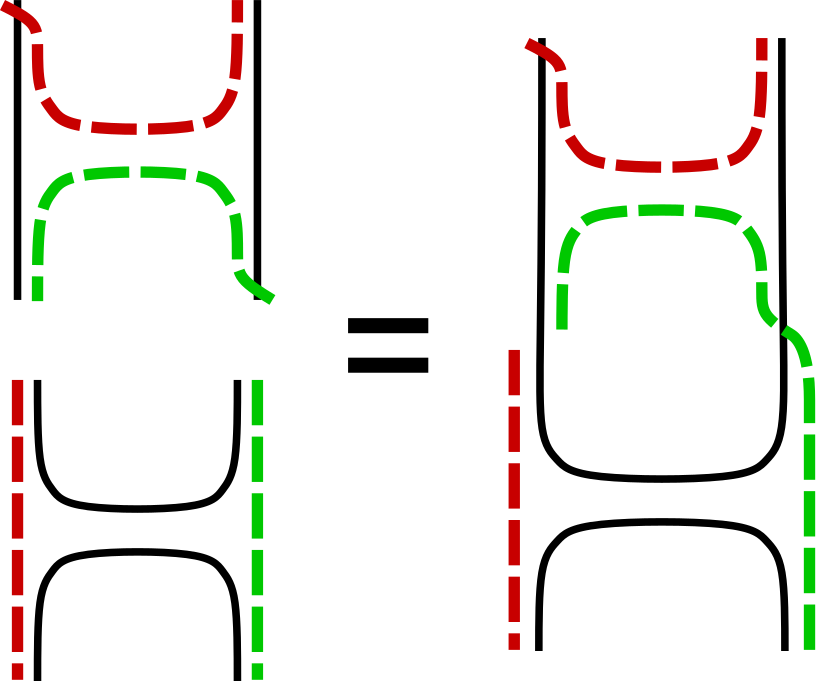}
		\subcaption{$\alpha_i\beta_i$}
		\label{alphabeta}
	\end{minipage}
	\begin{minipage}[h]{.33\textwidth}
		\centering
		\includegraphics[scale=.11]
  {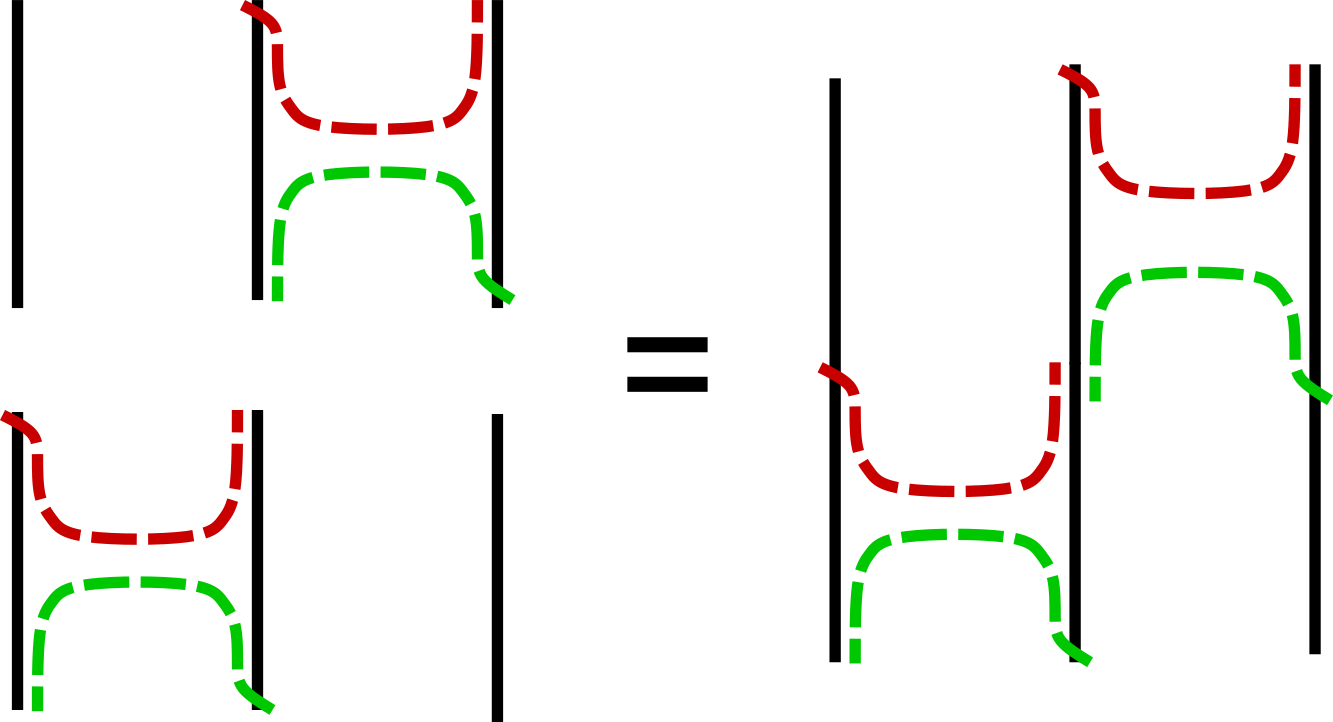}
		\subcaption{$\alpha_i\alpha_{i-1}$}
		\label{combinationgroup6}
	\end{minipage}
    \caption{Diagrams corresponding to products of origami monoid generators}
\end{figure}

%%%%%%%%%%%%%%%%%%%%%%%%%%%%%%%%%%%%%%%%%%%%
\subsection{Relations in $\mc O_n$}\label{sec:writing}
%%%%%%%%%%%%%%%%%%%%%%%%%%%%%%%%%%%%%%%%%%%%

\begin{sloppypar}
	The rewriting rules (which generate the relations within the monoids) 
are motivated by the similarity between the DNA origami structures (Fig.~\ref{origami_clean})
    and the diagrams of Jones 
    monoids (Fig.~\ref{lieb1}).
	It is deemed that the relations of Jones monoids simplify the DNA origami structure, and may be useful for designing efficient and more solid structures by the rewriting rules proposed below.
\end{sloppypar}

%%%%%%%%%%%%%%%%%%%%%%%%%%%%%%%%%%%%
\subsubsection{Defining relations of $\mc O_n$}
%%%%%%%%%%%%%%%%%%%%%%%%%%%%%%%%%%%%
 Given a set of generators
$\Sigma_{n}=\{\alpha_{1},\alpha_{2},\dots,\alpha_{n-1},\beta_{1},\beta_{2},\dots,\beta_{n-1}\}$, we  define a set of rewriting rules that simplify DNA origami structures similarly to how the relations of Jones monoids do to their respective diagrams. 
Using such simplification of diagrammatic representations,  in \cite{GarrettJKS19} we proposed  a set of rewriting rules on the alphabet $\Sigma_n$.
We define a string rewriting system $(\Sigma_{n},R)$ as follows.

 To ease the notation, we define an involution  on $\Sigma_{n}$, denoted by a bar, such that  $\overline{\alpha_i}=\beta_i$ and $\overline{\beta_i}=\alpha_i$, and  extend this operation to the free monoid $\Sigma^*_n$ by defining $\overline{w}$ for a word $w$ to be the word obtained from $w$ by applying a bar to each letter of $w$.
Let $\gamma \in \{\alpha,\beta\}$ and $i \in \{1,\dots,n-1\}$. Then we have:

$$
\begin{array}{llrlll}
{\rm (1)}\  & {\rm (Idempotence)} & (\gamma_{i})^2 & \rightarrow &\gamma_{i} & \\
{\rm (2)}\  & {\rm (Left \ Jones \ relation)} &\gamma_{i}\gamma_{i+1}\gamma_{i} & \rightarrow & \gamma_{i} & \\
{\rm (3)}\  & {\rm (Right\  Jones \ relation)} &\gamma_{i}\gamma_{i-1}\gamma_{i} & \rightarrow & \gamma_{i} & \\
{\rm (4)}\  & {\rm (Inter-commutation)} &\gamma_{i}\overline{\gamma_{j}} & \rightarrow & \overline{\gamma_{j}} \gamma_{i}, \ {\rm for} \ i\ne j \\
{\rm (5)}\  & {\rm (Intra-commutation)} &\gamma_{i}\gamma_{j} &\rightarrow & \gamma_{j}\gamma_{i}, \ {\rm for} \  |i-j|\geq2
\end{array}
$$

\noindent
The rules are extended to $\Sigma_{n}^*$ as described in Section~\ref{sec:string}.

Although the rewriting rules are inspired by Jones monoids, they also reflect the diagrams seen in DNA origami. In particular, rule $(4)$ is a consequence of the diagrams in Fig. \ref{rule3}, and is not related to Jones monoids. 
Moreover, this rule allows partial commutation between $\alpha$'s and $\beta$'s that results in a monoid $\mc O_n$ which is not merely a product of Jones monoids.

\begin{figure}[h!]
	\centering
		\includegraphics[scale=.1]{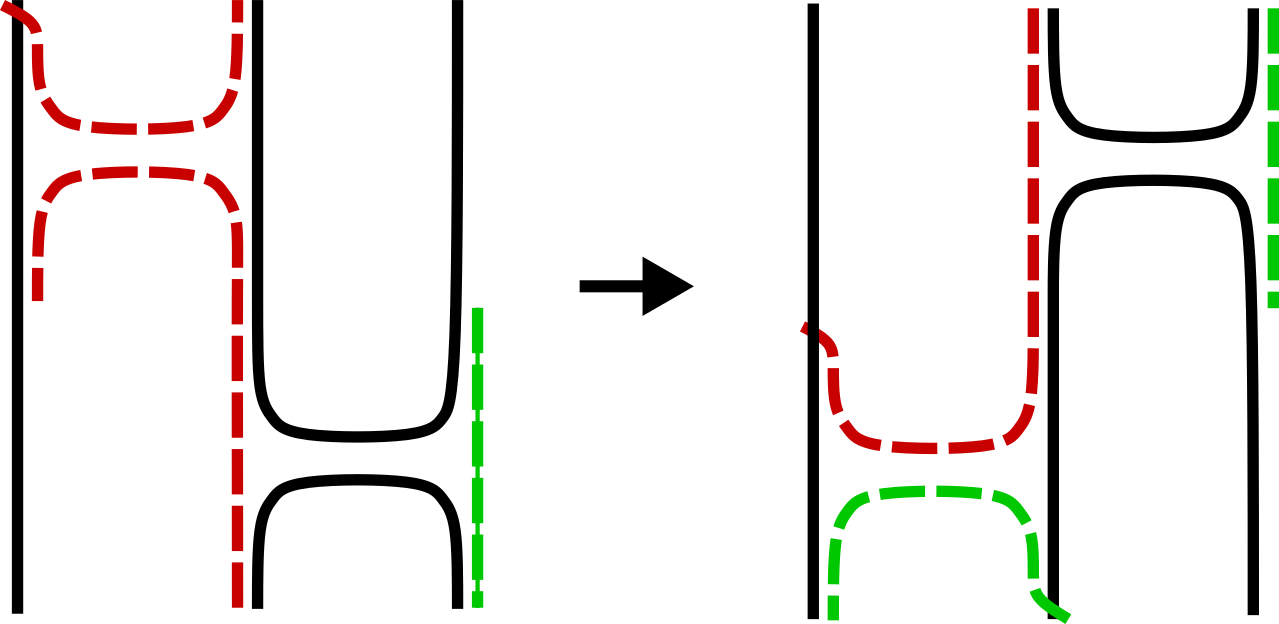}
		 \caption{{\small An example of the inter-commutation rewriting rule, $\alpha_i\beta_{i+1}\rightarrow \beta_{i+1}\alpha_i$, rule (4)}}
		\label{rule3}
\end{figure}

%%%%%%%%%%%%%%%%%%%%%%%%%%%%%%%%%%%%%
\subsubsection{Additional rewriting rules derived by substitution}\label{sec:rewriting2}
%%%%%%%%%%%%%%%%%%%%%%%%%%%%%%%%%%%%

We note that a DNA origami structure has no internal loops, neither for the scaffold strand nor for the staples. Therefore, rules similar to $(1)$--$(5)$ can be applied to  products of $\alpha$'s and $\beta$'s. As shown in \cite{GarrettJKS19}, more  general substitution rules for the product of generators $\alpha_i$ and $\beta_i$ can be observed directly from the diagrams to extend the original rules to a larger set of rules. 
For example, we can use $\gamma_i=\alpha_i\beta_i$ in the idempotence rule (1) (see Fig.~\ref{substitution1})
and obtain a new rewriting rule that cannot be derived from the original set. 
These substitutions within the rules cannot always be justified by the diagrams. For example, substituting $\gamma_i=\alpha_i\beta_i$ in rule (4) produces inequivalent diagrams (Fig.~\ref{wrong_rule}). The expansion of rule (5) on $\gamma_i=\alpha_i\beta_i$ is a consequence of (4) and (5) so it does not become a new rule. We complete the set of rules and the definition of $\mc O_n$ with the following definition.

\begin{figure}[H]
     \begin{minipage}[h]{.28\textwidth}
		\centering
		\includegraphics[scale=.1]{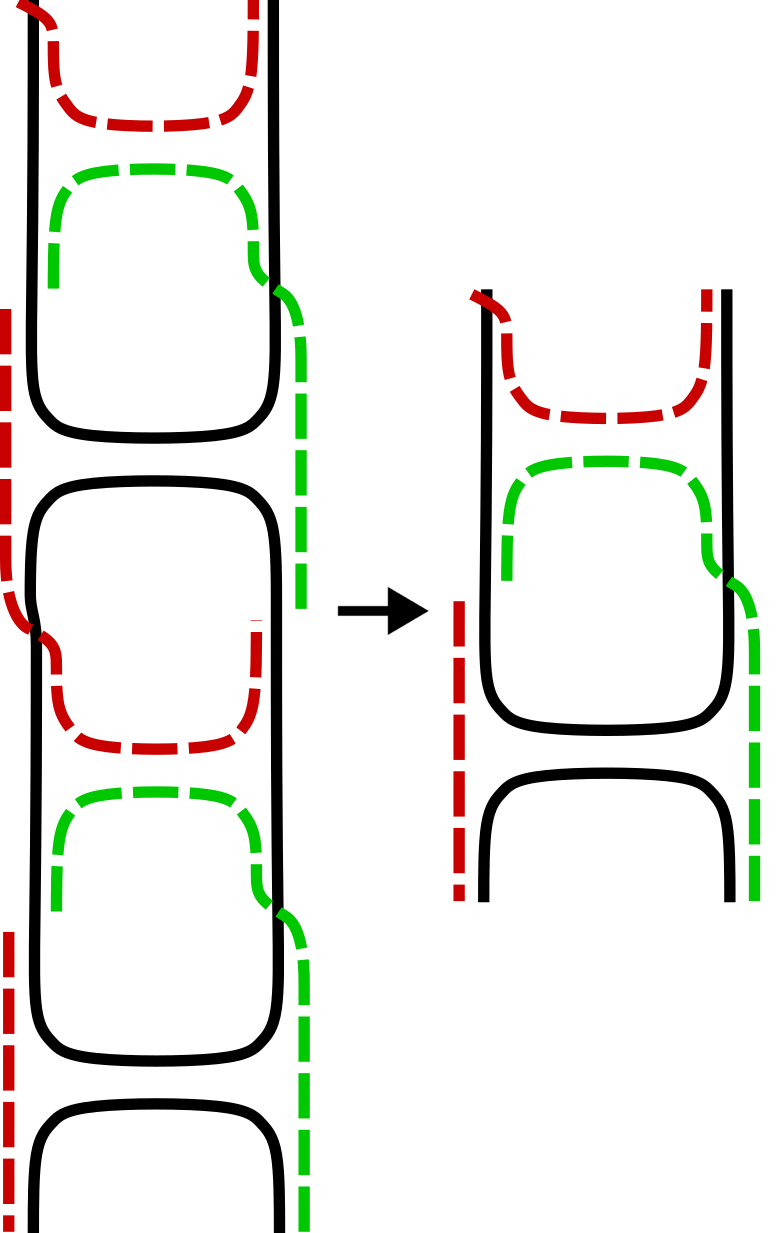}
\subcaption{Substitution of $\alpha\beta$ into the first rewriting rule: $(\alpha_i\beta_i)^2 \rightarrow \alpha_i\beta_i$}
		\label{abab}
	\end{minipage}\quad
	\begin{minipage}[h]{.28\textwidth}
		\centering
		\includegraphics[scale=.10]{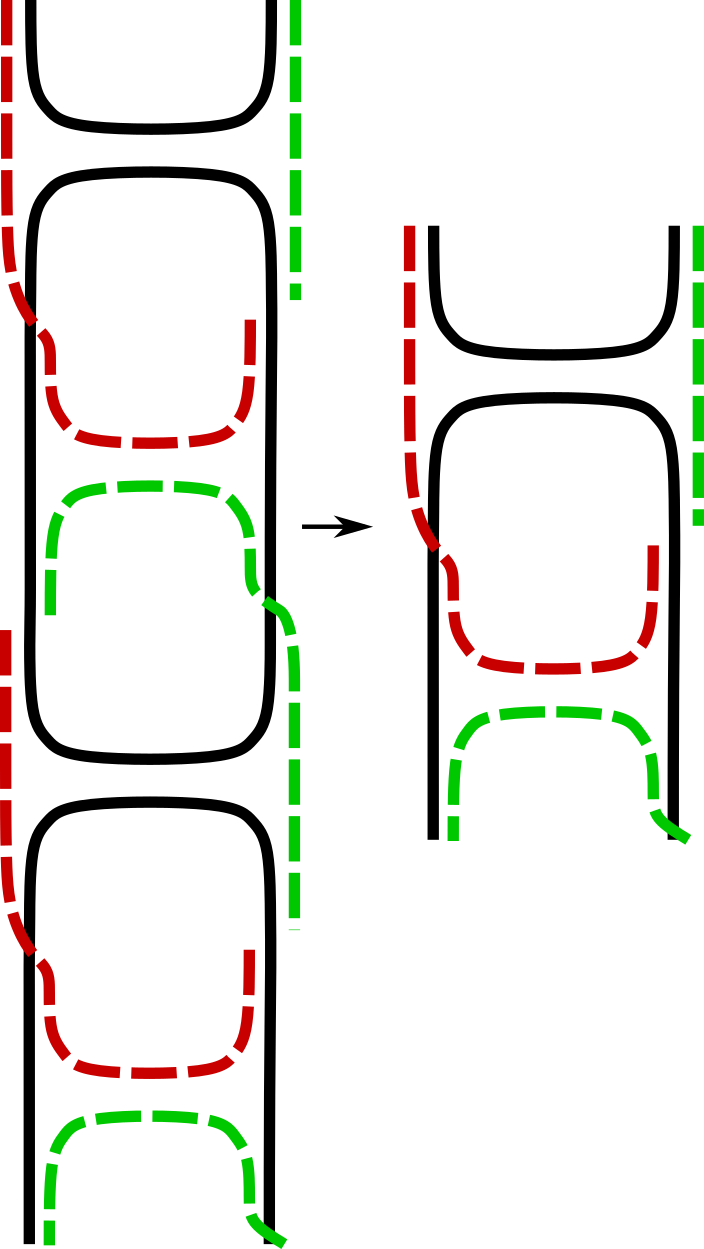}
		\subcaption{Substitution of  $\beta\alpha$ into the first rewriting rule: $(\beta_i\alpha_i)^2 \rightarrow \beta_i\alpha_i$}
		\label{baba}
	\end{minipage}
  \quad  
 \begin{minipage}[h]{.35\textwidth}
        \centering
	\includegraphics[scale=.09]{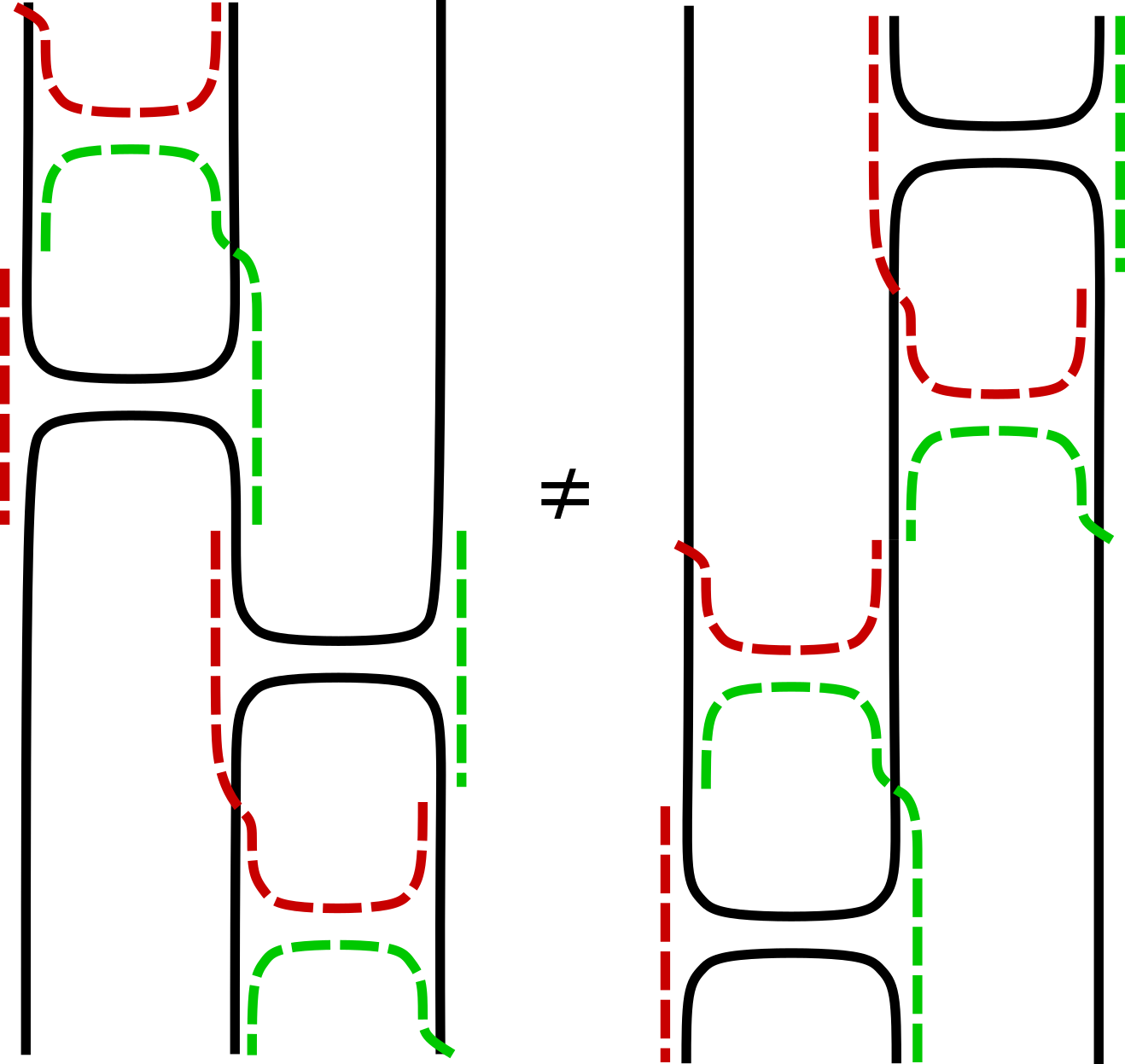}
	\subcaption{{\small Conflicting substitution of $\gamma=\alpha\beta$ into rewriting rule (4)
	}}
	\label{wrong_rule}
    \end{minipage}
    \caption{Substitution of $\gamma=\alpha\beta$ into rewriting rules (1) and (4). Substitution in (4) conflicts with the structure of the scaffold: the scaffold strand at the top left is connected to the second strand only on the left side of the figure, and on the right hand side of the figure it is connected to the third strand.}\label{substitution1}
\end{figure}

\medskip
\begin{definition}\label{origami-monoid}
	{\rm
		The \emph {origami monoid}  ${\mathcal O}_n$  is the monoid with a set of generators $\Sigma_n$  and relations generated by
		the rewriting rules (1) through (5), and for $\gamma \in \{\alpha, \beta \}$ and $i \in \{1, \dots, n-1\}$, the following additional rules:
		
		(1a) $(\gamma_i\overline{\gamma_i})^2 
		\rightarrow
		\gamma_i\overline{\gamma_i}$
		
		(2a) $\gamma_i\overline{\gamma_i}\gamma_{i+1}\overline{\gamma_{i+1}}\gamma_i\overline{\gamma_i} \rightarrow
		\gamma_i\overline{\gamma_i}$
		
		(3a) $\gamma_i\overline{\gamma_i}\gamma_{i-1}\overline{\gamma_{i-1}}\gamma_i\overline{\gamma_i} \rightarrow
		\gamma_i\overline{\gamma_i}$
		
		(2b) $\gamma_i\overline{\gamma_i}\gamma_i \gamma_{i+1}\overline{\gamma_{i+1}}\gamma_{i+1}\gamma_i\overline{\gamma_i}\gamma_i \rightarrow
		\gamma_i\overline{\gamma_i}\gamma_i   $
		
		(3b) $\gamma_i\overline{\gamma_i}\gamma_i \gamma_{i-1}\overline{\gamma_{i-1}}\gamma_{i-1}\gamma_i\overline{\gamma_i}\gamma_i \rightarrow 
		\gamma_i\overline{\gamma_i}\gamma_i   $
	}
\end{definition}
 
We focus on the algebraic study of the structures of origami monoids defined by the above presentations.

Based on GAP computations, we observe that the following rewriting rules hold in $\mathcal{O}_4$: 
$$
\alpha_3\alpha_1 \beta_1  \leftrightarrow\alpha_ 3 \beta_1 \alpha_1 , \quad  \alpha_1 \alpha_3 \beta_3 \leftrightarrow \alpha_1 \beta_3 \alpha_3, \quad  \beta_3\alpha_1 \beta_1  \leftrightarrow\beta_ 3 \beta_1 \alpha_1, \quad \beta_1 \alpha_3 \beta_3 \leftrightarrow \beta_1 \beta_3 \alpha_3,$$ $$\text{ and} \quad x\alpha_2\beta_2y \leftrightarrow x \beta_2 \alpha_2 y \text{, where } x,y \in \{ \alpha_1, \alpha_2, \alpha_3, \beta_1, \beta_2, \beta_3\}.  
$$ 
These computations motivated the proof that the following rewriting rules can be derived from those defining $\mc O_n$.

\medskip
\begin{remark}
    {\rm
    Note that all rewriting rules are symmetric, so $v\rightarrow w$ implies $w\rightarrow v$.
Because each of the original rewriting rules given in the monoid presentation is closed under the reverse operation,
given any new rewriting rule $v\rightarrow w$, $w^R\rightarrow v^R$ also holds. 
We specify when the reverse $w_1^R\to w_2^R$ of a rewriting rule $w_1\to w_2$ is used by writing \textit{rev.} next to the rule over the arrow denoting the rewriting.} 
    \end{remark}

\medskip
\begin{lemma}\label{lemma:identities}
Let $i,j \in \{1,2,\dots ,n-1\}$ such that $|i-j|=1$ and let $\gamma \in \{\alpha, \beta\}$. Then we have the following:
    \begin{itemize}
    \item[(a)]$\overline{\gamma_i}\gamma_i\overline{\gamma_j}\gamma_j\overline{\gamma_i}\rightarrow\overline{\gamma_i}\gamma_i\gamma_j$.\label{lem:ident-a}
    \item[(b)]$\overline{\gamma_i}\gamma_i\gamma_j\overline{\gamma_i}\rightarrow\overline{\gamma_i}\gamma_i\gamma_j$. \label{lem:ident-b}
    \item[(c)]
 $\overline{\gamma_i}\gamma_i\overline{\gamma_i}\gamma_j\rightarrow\overline{\gamma_i}\gamma_i\gamma_j$.
 \label{lem:ident-c}
    \end{itemize}
\end{lemma}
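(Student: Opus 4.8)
The plan is to establish the three identities in the stated order, each time reducing to the defining rules (1)--(5), (1a)--(3b) and to the three parts of the lemma proved before it. Throughout I will freely use that every rule is symmetric (so $\to$ really means $\leftrightarrow$) and the reversal remark, since the left-hand sides of (a), (b), (c) are not symmetric and the reversal will be what lets me attack the word from the correct end.

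For part (a), $\overline{\gamma_i}\gamma_i\overline{\gamma_j}\gamma_j\overline{\gamma_i}\to\overline{\gamma_i}\gamma_i\gamma_j$ with $|i-j|=1$, I would first try to move the trailing $\overline{\gamma_i}$ leftward past the adjacent letters using inter-commutation (4): the factor $\gamma_j\overline{\gamma_i}$ has $i\neq j$, so $\gamma_j\overline{\gamma_i}\to\overline{\gamma_i}\gamma_j$ — wait, (4) is $\gamma_i\overline{\gamma_j}\to\overline{\gamma_j}\gamma_i$, and here both indices are adjacent but distinct, which is exactly the $i\neq j$ case of (4), so this is legal. That rewrites the word to $\overline{\gamma_i}\gamma_i\overline{\gamma_j}\,\overline{\gamma_i}\gamma_j$; applying (4) again to $\overline{\gamma_j}\,\overline{\gamma_i}$ (same type, distinct adjacent indices — this time it is intra-type, so I instead need rule (5) or the Jones structure, which does \emph{not} apply since $|i-j|=1$, so this move is blocked). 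The blockage tells me the right approach is instead to recognize $\overline{\gamma_i}\gamma_i$ as an idempotent block $\overline{\gamma_i}\gamma_i=\overline{\gamma_i}\gamma_i$ and apply the substituted Jones relation. Concretely, set $\delta=\overline{\gamma_i}$ so $\overline{\delta}=\gamma_i$; then $\overline{\gamma_i}\gamma_i=\delta\overline{\delta}$, $\overline{\gamma_j}\gamma_j=\delta'\overline{\delta'}$ with $|i-j|=1$, and the prefix $\delta\overline{\delta}\,\delta'\overline{\delta'}$ is the left side of (2a) missing its final $\delta\overline{\delta}$. I would multiply on the right by the idempotent, use (2a) in reverse to contract, then expand again; the cleanest route is to show $\overline{\gamma_i}\gamma_i\overline{\gamma_j}\gamma_j\overline{\gamma_i}\gamma_i\to\overline{\gamma_i}\gamma_i$ from (2a) and then cancel the final $\gamma_i$ by idempotence after re-inserting it via (1). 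So part (a) follows by: append $\gamma_i$ using (1) (i.e. $\overline{\gamma_i}\to\overline{\gamma_i}\gamma_i$ is not a rule, but $\overline{\gamma_i}\gamma_i\overline{\gamma_i}\to\overline{\gamma_i}\gamma_i$ via a Jones-type contraction followed by idempotence — I will need to chase exactly which combination of (1),(2),(3) gives $xy\overline{\gamma_i}\to xy$ when $xy$ ends appropriately). The honest main-obstacle sentence is below.

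Part (b), $\overline{\gamma_i}\gamma_i\gamma_j\overline{\gamma_i}\to\overline{\gamma_i}\gamma_i\gamma_j$, I expect to be the workhorse and I would prove it directly from (b)'s own right end: the suffix $\gamma_j\overline{\gamma_i}$ has distinct adjacent indices, so by inter-commutation (4) it becomes $\overline{\gamma_i}\gamma_j$, giving $\overline{\gamma_i}\gamma_i\overline{\gamma_i}\gamma_j$, and now the prefix $\overline{\gamma_i}\gamma_i\overline{\gamma_i}$ is a length-three alternating word in the two generators of index $i$, which collapses to $\overline{\gamma_i}\gamma_i$ — this is exactly part (c) read from its statement (or a consequence of (1),(1a): $\overline{\gamma_i}\gamma_i\overline{\gamma_i}\to\overline{\gamma_i}\gamma_i$ should follow by appending $\gamma_i$ via idempotence, applying (1a) to the block $(\overline{\gamma_i}\gamma_i)(\overline{\gamma_i}\gamma_i)$ after re-bracketing, then removing the extra $\gamma_i$). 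So (b) reduces to (c) plus rule (4). Then part (c), $\overline{\gamma_i}\gamma_i\overline{\gamma_i}\gamma_j\to\overline{\gamma_i}\gamma_i\gamma_j$, is proved in isolation: show $\overline{\gamma_i}\gamma_i\overline{\gamma_i}\to\overline{\gamma_i}\gamma_i$ and append $\gamma_j$. For that sub-identity, multiply on the right by $\gamma_i$ (legal as a rewriting only in the sense of congruence — actually I cannot "append" in a rewriting system, so instead I use that $\overline{\gamma_i}\gamma_i\overline{\gamma_i}\gamma_i\to\overline{\gamma_i}\gamma_i$ by (1a) after re-bracketing $(\overline{\gamma_i}\gamma_i)(\overline{\gamma_i}\gamma_i)\to\overline{\gamma_i}\gamma_i$, and separately $\overline{\gamma_i}\gamma_i\overline{\gamma_i}\gamma_i\to\overline{\gamma_i}\gamma_i\overline{\gamma_i}$ by idempotence (1) on the final $\gamma_i$; combining the two gives $\overline{\gamma_i}\gamma_i\overline{\gamma_i}\leftrightarrow\overline{\gamma_i}\gamma_i$). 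With that, (c) is immediate and then (b) and (a) follow as above.

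The main obstacle I anticipate is bookkeeping rather than depth: because we work with a string rewriting system and not a free-group-style calculus, I cannot "cancel" or "append" letters at will, so every step must be realized as an honest application of one of the listed rules (possibly reversed), and the subtle point is getting the reversed forms and the bracketing of $(\gamma_i\overline{\gamma_i})$-blocks exactly right so that rules (1a)--(3b) apply on the nose. In particular I should be careful that in part (a) the two occurrences of $\overline{\gamma_i}\gamma_i$ that I want to merge via (2a) are genuinely adjacent to a $\overline{\gamma_j}\gamma_j$ block with $|i-j|=1$ and in the correct order; if the order is reversed I use (3a) instead of (2a), and the reversal remark handles the asymmetry of the lemma's statement. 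I would present (c) first as a standalone sub-lemma, then (b), then (a), reversing the order in which they are listed, since that is the logical dependency; alternatively, all three can be bundled by first proving $\overline{\gamma_i}\gamma_i\overline{\gamma_i}\leftrightarrow\overline{\gamma_i}\gamma_i$ and $\overline{\gamma_i}\gamma_i\overline{\gamma_j}\gamma_j\leftrightarrow\overline{\gamma_i}\gamma_i\gamma_j\,(\text{after absorbing})$ as the two atomic facts and deriving everything from rule (4) and idempotence.
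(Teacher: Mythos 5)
There is a genuine gap, and it sits at the heart of your reduction. You propose to prove (c) (and hence (b)) by first establishing the context-free collapse $\overline{\gamma_i}\gamma_i\overline{\gamma_i}\leftrightarrow\overline{\gamma_i}\gamma_i$ and then ``appending'' the remaining letter. That identity is false in $\mathcal{O}_n$: taking $\gamma=\alpha$ and $n=2$ it reads $\beta_1\alpha_1\beta_1=\beta_1\alpha_1$, but $\beta_1\alpha_1\beta_1$ and $\beta_1\alpha_1$ are distinct elements of $\mathcal{O}_2$ (the six non-identity elements of $\mathcal{O}_2$ listed in Section~\ref{sec:normal} include both). Your derivation of it also misapplies rule (1): in $\overline{\gamma_i}\gamma_i\overline{\gamma_i}\gamma_i$ no letter is doubled, so idempotence cannot strip the final $\gamma_i$; rule (1) only collapses a factor $\gamma_i\gamma_i$, it never deletes a single occurrence. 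The whole point of the lemma is that these collapses are valid only \emph{in the context of} an adjacent index-$j$ letter with $|i-j|=1$, which is what feeds rules (2a)/(3a); a context-free proof cannot exist. The paper's logical order is consequently the reverse of yours: (a) is proved first, directly from the defining rules, and (b), (c) are derived from it, precisely because (a) is the one statement whose left-hand side can be massaged into the pattern of (2a)/(3a).

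For (a) you do identify the right target --- produce $\overline{\gamma_i}\gamma_i\overline{\gamma_j}\gamma_j\overline{\gamma_i}\gamma_i$ as a prefix and contract it by (2a)/(3a) --- and you correctly observe that naive commutation is blocked. But you never supply the missing step, and you flag this yourself. The trick you are missing is: first commute the trailing $\overline{\gamma_i}$ past $\gamma_j$ by (4), then expand the now-trailing $\gamma_j$ into $\gamma_j\gamma_i\gamma_j$ by the reversed Jones relation (2)/(3), and commute once more by (4); this manufactures the $\overline{\gamma_i}\gamma_i$ block needed to complete the (2a)/(3a) pattern while leaving a spare $\gamma_j$ at the end, which is exactly the right-hand side. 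Once (a) is in hand, (b) follows by applying (a) in reverse to the prefix $\overline{\gamma_i}\gamma_i\gamma_j$, absorbing the resulting doubled $\overline{\gamma_i}$ by (1), and applying (a) forward again; (c) then follows from (b) by a single application of (4). As written, your proposal does not establish any of the three parts.
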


\begin{proof}
 (a)
    In the proof of this and the following lemmas, we underline the subword to be rewritten in each step and indicate the rewriting rule used 
    above the arrow. Note that rewriting rules (2) and (3) are always cited together because the lemma applies to both scenarios where $j=i+1$ or $j=i-1$. We compute
$$
\begin{array}{ccccccc}
\overline{\gamma_i}\gamma_i\overline{\gamma_j}\underline{\gamma_j\overline{\gamma_i}}
& 
\xrightarrow{(4)}
&
\overline{\gamma_i}\gamma_i\overline{\gamma_j}\overline{\gamma_i}\underline{\gamma_j}
&
\xrightarrow{(2)(3)}
&
\overline{\gamma_i}\gamma_i\overline{\gamma_j}\underline{\overline{\gamma_i}\gamma_j}\gamma_i\gamma_j 
&
\xrightarrow{(4)}
&
\underline{\overline{\gamma_i}\gamma_i\overline{\gamma_j}\gamma_j\overline{\gamma_i}\gamma_i}\gamma_j \\
\xrightarrow{(2a)(3a)}
& 
\overline{\gamma_i}\gamma_i\gamma_j . 
\end{array}
$$

(b)
    We compute
$$\underline{\overline{\gamma_i}\gamma_i\gamma_j}\overline{\gamma_i}
 \ 
 \xrightarrow{\text{Lem.~\ref{lemma:identities}(a)}}
 \ 
\overline{\gamma_i}\gamma_i\overline{\gamma_j}\gamma_j\underline{\overline{\gamma_i}\overline{\gamma_i}}
\ 
\xrightarrow{(1)} 
\ 
\underline{\overline{\gamma_i}\gamma_i\overline{\gamma_j}\gamma_j\overline{\gamma_i}}
\ 
\xrightarrow{\text{Lem.~\ref{lemma:identities}(a)}}
 \ 
 \overline{\gamma_i}\gamma_i\gamma_j .
$$
(c) this relation follows directly from (b) by observing that $\gamma_j\overline{\gamma_i}=\overline{\gamma_i}\gamma_j$ by rule (4).
\end{proof}

\begin{lemma}\label{lemma:general} Let $\mc O_n$ be an origami monoid where $n\geq 2$.
\begin{enumerate}
    \item[(i)] For any  $i \in \{1, \dots, n-1\}$ and $\gamma \in \{\alpha, \beta\}$ and for $|m-i|\ge 2$  
    we have that $\gamma_m\alpha_i \beta_i \rightarrow \gamma_m \beta_i \alpha_i$, or equivalently $\alpha_i \beta_i \gamma_m \rightarrow \beta_i \alpha_i \gamma_m$.\label{part1}
    \item[(ii)] For any $x,y \in \{\alpha_1, \dots, \alpha_{n-1}, \beta_1, \dots, \beta_{n-1}\}$, and any $i \in \{1, \dots, n-1\}$, we have that $x\alpha_i\beta_i y \rightarrow x \beta_i\alpha_i y$ holds in $\mathcal{O}_n$. \label{part2}
\end{enumerate}
\end{lemma}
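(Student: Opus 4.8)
Both parts assert a form of ``contextual commutation'': $\alpha_i$ and $\beta_i$, which do not commute freely, do commute once a suitable letter sits beside them. I would prove everything by explicit manipulation in the rewriting system, using the Jones relations (2),(3), the commutation rules (4),(5), the substitution rules (1a)--(3b), and above all Lemma~\ref{lemma:identities} together with its word-reversed form (legitimate by the Remark, since all rules are symmetric). Two symmetries cut down the bookkeeping: the bar-involution $\alpha_j\leftrightarrow\beta_j$, and the index-reflection $\gamma_j\mapsto\gamma_{n-j}$ (which exchanges (2) with (3) and (2a),(2b) with (3a),(3b) and so preserves the presentation). Applying these, it suffices in part (i) to take the distinguished type to be $\alpha$ and $m>i$, and in part (ii) to reduce the list of configurations of $(x,y)$ to one representative per symmetry class.

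\textbf{Part (i).} Since $|m-i|\ge 2$, rules (4) and (5) let $\gamma_m$ slide past both $\alpha_i$ and $\beta_i$, so the two displayed forms are equivalent; the actual content is producing the swap $\alpha_i\beta_i\to\beta_i\alpha_i$, which commutation alone never does. I would induct on $d=|m-i|$. For the inductive step $d\ge 3$: write $\gamma_m=\gamma_m\gamma_{m-1}\gamma_m$ by a Jones relation, push the inner $\gamma_m$ to the right of $\alpha_i\beta_i$ (allowed since $d\ge 3$), apply the induction hypothesis to $\gamma_{m-1}\alpha_i\beta_i$ (note $|m-1-i|=d-1\ge 2$), and then deflate $\gamma_m\gamma_{m-1}\gamma_m\to\gamma_m$. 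For the base case $d=2$, say $m=i+2$: inflate $\alpha_i\to\alpha_i\alpha_{i+1}\alpha_i$ and $\beta_i\to\beta_i\beta_{i+1}\beta_i$ so that the block $\alpha_i\beta_i$ becomes flanked by letters of the adjacent index $i+1$ on both sides, carry out a fixed sequence of rewrites using Lemma~\ref{lemma:identities}, rules (4),(5), and the substitution rules (1a)--(3b) to turn the flanked block into $\beta_i\alpha_i$, and finally undo the inflations and commute $\gamma_{i+2}$ back into place.

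\textbf{Part (ii).} I would split on the indices of $x$ and $y$ relative to $i$. If $x$ (or $y$) has index at distance $\ge 2$ from $i$, commute it past $\alpha_i\beta_i$ by (4),(5), invoke part (i) in its one-sided form (e.g.\ $\alpha_i\beta_i\gamma_m\to\beta_i\alpha_i\gamma_m$ applied inside the word), and commute back; this disposes of every case with a ``far'' context on either side. If $x\in\{\alpha_i,\beta_i\}$ (symmetrically for $y$): when the type matches, idempotence collapses $x\alpha_i\beta_i y\to\alpha_i\beta_i y$, and the target likewise simplifies, reducing to a shorter instance; when the type is opposite, bring a letter of index $i\pm 1$ next to $\alpha_i\beta_i$ on the free side by a Jones inflation and apply Lemma~\ref{lemma:identities}, again reducing to a shorter or already-settled instance, with residual small cases handled by a further appeal to part (i). The remaining configurations, in which both $x$ and $y$ have index $i-1$ or $i+1$, are finite in number; each is treated exactly as in the base case of part (i) --- inflate if needed so $\alpha_i\beta_i$ is flanked by letters of index $i\pm 1$, apply Lemma~\ref{lemma:identities}, commute, deflate --- and bar-symmetry, index-reflection, and word-reversal reduce them to a short explicit list.

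\textbf{Main obstacle.} The crux is the bounded computation that creates the swap $\alpha_i\beta_i\to\beta_i\alpha_i$ in the presence of a flanking letter of index $i\pm 1$ (equivalently, the base case $d=2$ of part (i) and the ``both contexts adjacent'' configurations of part (ii)). The difficulty is choosing the Jones inflations and commutations so that the hypotheses of Lemma~\ref{lemma:identities} and of the substitution rules (1a)--(3b) become literally applicable rather than almost-applicable; done naively one tends to go in circles (inflate--commute--deflate returning to the start), so the order of moves matters. Moreover not all type-combinations of the flanking letters are identified by the evident symmetries, so a residue of case-by-case verification is unavoidable. A minor secondary nuisance: in the base case $d=2$ the inflating index $i+1$ is also adjacent to $m=i+2$, so one must track carefully which letters actually commute with $\gamma_{i+2}$.
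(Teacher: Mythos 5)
Your plan is essentially the paper's proof. Part (i) is done there by exactly the induction you describe: the inductive step inflates $\gamma_m\to\gamma_m\gamma_j\gamma_m$ by a Jones relation, commutes the inner $\gamma_m$ across the block, applies the hypothesis to $\gamma_j$ (whose index is one closer to $i$), and deflates; part (ii) dispatches far contexts through part (i) and reduces the eighteen near-context configurations to three representatives by word reversal, the bar involution, and the reflection $i+1\leftrightarrow i-1$, then checks those by hand using Lemma~\ref{lemma:identities}.

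The one real reservation is that at the step you yourself single out as the crux --- the bounded computation that actually performs the swap next to a flanking letter --- you only assert that ``a fixed sequence of rewrites'' exists. That computation is the entire content of the lemma. For the record, the paper's base case $|m-i|=2$ does not inflate $\alpha_i$ and $\beta_i$ as you propose; it inflates the far letter into $\gamma_m\gamma_j\gamma_m$ with $j$ strictly between $i$ and $m$, and the swap is achieved by Lemma~\ref{lemma:identities}(c) applied once forward (inserting an extra $\overline{\gamma_i}$ so that $\overline{\gamma_i}\gamma_i$ becomes $\overline{\gamma_i}\gamma_i\overline{\gamma_i}$ against a $\gamma_j$) and once reversed (deleting the leading $\overline{\gamma_i}$ against a $\gamma_j$ on the other side), with $\gamma_j$ transported only inside the pattern $\gamma_m\gamma_j\gamma_m$, so the adjacency of $j$ to $m$ that you flag never obstructs a commutation. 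Your flanking-by-inflation variant should also close, since the same two uses of Lemma~\ref{lemma:identities}(c) are available, but until that chain is written out the argument is a plan rather than a proof. A smaller inaccuracy in part (ii): when $x$ has index $i$ and matching type, idempotence collapses only one side (e.g.\ $\alpha_i\alpha_i\beta_iy\to\alpha_i\beta_iy$, while the target is $\alpha_i\beta_i\alpha_iy$); except when $y$ also has index $i$, identifying the two still needs Lemma~\ref{lemma:identities} or, as in the paper, the observation that this cell is the word-reversal of one of the cases you do treat.
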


\begin{proof} (i) We proceed by induction on $k=|m-i|\ge 2$. Let $k=2$ and 
$j$ be between $i$ and $m$, i.e.,  $|m-j|=1$. We compute 
$$
\begin{array}{cccccc}
\underline{\gamma_m\overline{\gamma_i}\gamma_i}
&
\xrightarrow{(4)(5)}
&
\overline{\gamma_i}\gamma_i\underline{\gamma_m}
&
\xrightarrow{(2)(3)}
&
\underline{\overline{\gamma_i}\gamma_i\gamma_m}\gamma_j\gamma_m
&
\xrightarrow{(4)(5)}
\\[2mm]
\gamma_m\underline{\overline{\gamma_i}\gamma_i\gamma_j}\gamma_m
&
\xrightarrow{\text{Lem.\ref{lem:ident-c}(c)}}
&
\underline{\gamma_m\overline{\gamma_i}\gamma_i\overline{\gamma_i}}\gamma_j\gamma_m
&
\xrightarrow{(4)(5)}
&
\overline{\gamma_i}\gamma_i\overline{\gamma_i}\underline{\gamma_m\gamma_j\gamma_m}
&
\xrightarrow{(2)(3)}
\\[2mm]
\underline{\overline{\gamma_i}\gamma_i\overline{\gamma_i}\gamma_m}
&
\xrightarrow{(4)(5)}
&
\underline{\gamma_m}\overline{\gamma_i}\gamma_i\overline{\gamma_i}
&
\xrightarrow{(2)(3)}
&
\gamma_m\gamma_j\underline{\gamma_m\overline{\gamma_i}\gamma_i\overline{\gamma_i}}
&
\xrightarrow{(4)(5)}
\\[2mm]
\gamma_m\underline{\gamma_j\overline{\gamma_i}\gamma_i\overline{\gamma_i}}\gamma_m
&
\xrightarrow{\text{rev. Lem.\ref{lem:ident-c}(c)}}
&
\gamma_m\gamma_j\underline{\gamma_i\overline{\gamma_i}\gamma_m}
&
\xrightarrow{(4)(5)}
&
\underline{\gamma_m\gamma_j\gamma_m}\gamma_i\overline{\gamma_i}
&
\xrightarrow{(2)(3)}
\\[2mm]
\gamma_m\gamma_i\overline{\gamma_i}. 
\end{array}
$$
Suppose the relation holds for $k$ and consider $|m-i|=k+1$. Again, let 
$j$ be between $i$ and $m$ with $|m-j|=1$. We have

$$
\begin{array}{cccccc}
\underline{\gamma_{m}}\gamma_i\overline{\gamma_i}
&
\xrightarrow{(2)(3)} 
&
\gamma_{m}\gamma_{j}\underline{\gamma_{m}}\gamma_i\overline{\gamma_i} 
&
\xrightarrow{(4)(5)} 
&
\gamma_{m}\gamma_{j}\underline{\gamma_i\overline{\gamma_i}}\gamma_{m} 
&
\xrightarrow{\text{ind. hyp.}}\\[2mm]
\gamma_{m}\gamma_{j}\overline{\gamma_i}\gamma_i\underline{\gamma_{m}}
&
\xrightarrow{(4)(5)}
&
\underline{\gamma_{m}\gamma_{j} \gamma_{m}}\overline{\gamma_i}\gamma_i
&
\xrightarrow{(2)(3)} 
&
\gamma_{m}\overline{\gamma_i}\gamma_i .
&
\end{array} 
$$

which completes the proof of the first part of the lemma.

\begin{table}[h]
\centering
\begin{tabular}{|c|c|c|c|}
\hline
$x=y=\gamma$ & $q=i+1$ & $q=i$ & $q=i-1$ \\
\hline
$p=i+1$ & (1A) $\gamma_{i+1}\overline{\gamma_i}\gamma_i\gamma_{i+1}\rightarrow$ &  (1B) $\gamma_{i+1}\overline{\gamma_i}\gamma_i\gamma_i\rightarrow$ & (1C) $\gamma_{i+1}\overline{\gamma_i}\gamma_i\gamma_{i-1}\rightarrow$ \\
 & $\gamma_{i+1}\gamma_i\overline{\gamma_i}\gamma_{i+1}$ & $\gamma_{i+1}\gamma_i\overline{\gamma_i}\gamma_i$ & $\gamma_{i+1}\gamma_i\overline{\gamma_i}\gamma_{i-1}$\\
\hline
$p=i$ & (2A) $\gamma_i\overline{\gamma_i}\gamma_i\gamma_{i+1}\rightarrow$ & (2B) $\gamma_i\overline{\gamma_i}\gamma_i\gamma_i\rightarrow$ & (2C) $\gamma_i\overline{\gamma_i}\gamma_i\gamma_{i-1}\rightarrow$ \\ 
& $\gamma_i\gamma_i\overline{\gamma_i}\gamma_{i+1}$ & $\gamma_i\gamma_i\overline{\gamma_i}\gamma_i$ & $\gamma_i\gamma_i\overline{\gamma_i}\gamma_{i-1}$\\
\hline
$p=i-1$ & (3A) $\gamma_{i-1}\overline{\gamma_i}\gamma_i\gamma_{i+1}\rightarrow$ & 
(3B) $\gamma_{i-1}\overline{\gamma_i}\gamma_i\gamma_i\rightarrow$ & 
(3C) $\gamma_{i-1}\overline{\gamma_i}\gamma_i\gamma_{i-1}\rightarrow$\\
& $\gamma_{i-1}\gamma_i\overline{\gamma_i}\gamma_{i+1}$ & $\gamma_{i-1}\gamma_i\overline{\gamma_i}\gamma_i$ & $\gamma_{i-1}\gamma_i\overline{\gamma_i}\gamma_{i-1}$\\
\hline
\end{tabular}
\caption{Cases where $x=y=\gamma$}\label{tab1}
\end{table}
\vskip -.5cm
(ii) 
Observe that when $x$ or $y$ has index $\leq i-2$ or $\geq i+2$, the statement follows directly from part (i). 
It then suffices to show that the statement holds in the  cases where $x$ and $y$ are both $\gamma$ or where $x$ is $\gamma$ and $y$ is $\overline{\gamma}$ (where $\gamma$ is one of $\alpha$ or $\beta$ and $\overline{\gamma}$ is the other) and where their indices (call them $p$ and $q$) differ from $i$ by less than 2, i.e., $p,q \in \{i-1, i, i+1\}$. This results in 18 cases (Tables~\ref{tab1} and ~\ref{tab2}). However, many of these cases are identical.
The tables show the cases where $x$ and $y$ are of the same type (Table~\ref{tab1}) and different (Table~\ref{tab2}).

\begin{table}[h!]
\centering
\begin{tabular}{|c|c|c|c|}
\hline
$x=\gamma$, $y=\overline{\gamma}$ & $q=i+1$ & $q=i$ & $q=i-1$ \\
\hline
$p=i+1$ & (1A) $\gamma_{i+1}\overline{\gamma_i}\gamma_i\overline{\gamma_{i+1}}\rightarrow$ &  (1B) $\gamma_{i+1}\overline{\gamma_i}\gamma_i\overline{\gamma_i}\rightarrow$ & (1C) $\gamma_{i+1}\overline{\gamma_i}\gamma_i\overline{\gamma_{i-1}}\rightarrow$ \\
& $\gamma_{i+1}\gamma_i\overline{\gamma_i}\overline{\gamma_{i+1}}$ & $\gamma_{i+1}\gamma_i\overline{\gamma_i}\overline{\gamma_i}$ & $\gamma_{i+1}\gamma_i\overline{\gamma_i}\overline{\gamma_{i-1}}$\\
\hline
$p=i$ & (2A) $\gamma_i\overline{\gamma_i}\gamma_i\overline{\gamma_{i+1}}\rightarrow$ & (2B) $\gamma_i\overline{\gamma_i}\gamma_i\overline{\gamma_i}\rightarrow$ & (2C) $\gamma_i\overline{\gamma_i}\gamma_i\overline{\gamma_{i-1}}\rightarrow$ \\
& $\gamma_i\gamma_i\overline{\gamma_i}\overline{\gamma_{i+1}}$ & $\gamma_i\gamma_i\overline{\gamma_i}\overline{\gamma_i}$ & $\gamma_i\gamma_i\overline{\gamma_i}\overline{\gamma_{i-1}}$\\
\hline
$p=i-1$ & (3A) $\gamma_{i-1}\overline{\gamma_i}\gamma_i\overline{\gamma_{i+1}}\rightarrow$ & 
(3B) $\gamma_{i-1}\overline{\gamma_i}\gamma_i\overline{\gamma_i}\rightarrow$ & 
(3C) $\gamma_{i-1}\overline{\gamma_i}\gamma_i\overline{\gamma_{i-1}}\rightarrow$\\
& $\gamma_{i-1}\gamma_i\overline{\gamma_i}\overline{\gamma_{i+1}}$ & $\gamma_{i-1}\gamma_i\overline{\gamma_i}\overline{\gamma_{i}}$ & $\gamma_{i-1}\gamma_i\overline{\gamma_i}\overline{\gamma_{i-1}}$\\
\hline
\end{tabular}
\caption{Cases where $x=\gamma$ and $y=\overline{\gamma}$}\label{tab2}
\end{table}

Notice that within each respective table, cells (1A) and (3C), (1B) and (3B), (1C) and (3A), and (2A) and (2C) are equivalent because reflection over cell (2B) simply represents switching $i+1$ and $i-1$ on the indices of $x$ and $y$, which causes no change in this context. Further, notice that cells (1B) and (2A) are also equivalent, as the words in each rule may be reversed and (in Table~\ref{tab2}) $\gamma$ and $\overline{\gamma}$ may be switched to obtain the other. Case (2B) holds by directly applying the idempotence rules (1) and (1a).  We then only have to consider cells (1A), (1B), (1C), on each table, leaving us with the following cases. Let $j=i\pm 1$ and $\ell=i\mp 1$ to differentiate between cases where $p=i\pm 1$ and $q=i\mp 1$.

\begin{center}
\begin{tabular}{lll}
 \qquad \ Tab.~\ref{tab1}& & Tab.~\ref{tab2}\\
(1A) \ $\gamma_j\overline{\gamma_i}\gamma_i\gamma_j\rightarrow\gamma_j\gamma_i\overline{\gamma_i}\gamma_j$
& &
 \ $\gamma_j\overline{\gamma_i}\gamma_i\overline{\gamma_j}\rightarrow\gamma_j\gamma_i\overline{\gamma_i}\overline{\gamma_j}$
\\
(1B) \
$\gamma_j\overline{\gamma_i}\gamma_i\gamma_i\rightarrow\gamma_j\gamma_i\overline{\gamma_i}\gamma_i$
& \quad &
 \ $\gamma_j\overline{\gamma_i}\gamma_i\overline{\gamma_i}\rightarrow\gamma_j\gamma_i\overline{\gamma_i}\overline{\gamma_i}$
\\
(1C)\ $\gamma_j\overline{\gamma_i}\gamma_i\gamma_\ell\rightarrow\gamma_j\gamma_i\overline{\gamma_i}\gamma_\ell$
& &
\ $\gamma_j\overline{\gamma_i}\gamma_i\overline{\gamma_\ell}\rightarrow\gamma_j\gamma_i\overline{\gamma_i}\overline{\gamma_\ell}$
\end{tabular}
\end{center}

\begin{itemize}

\item[(1A)]
We compute \\
$\underline{\gamma_j\overline{\gamma_i}}\gamma_i\gamma_j\xrightarrow{(4)(5)}\overline{\gamma_i}\underline{\gamma_j\gamma_i\gamma_j}\xrightarrow{(2)(3)}\underline{\overline{\gamma_i}\gamma_j}\xrightarrow{(4)(5)}\underline{\gamma_j}\overline{\gamma_i}\xrightarrow{(2)(3)}\gamma_j\gamma_i\underline{\gamma_j\overline{\gamma_i}}\xrightarrow{(4)(5)}\gamma_j\gamma_i\overline{\gamma_i}\gamma_j$.
\smallbreak
and

$\underline{\gamma_j}\overline{\gamma_i}\gamma_i\overline{\gamma_j}\xrightarrow{(2)(3)}\gamma_j\gamma_i\underline{\gamma_j\overline{\gamma_i}}\underline{\gamma_i\overline{\gamma_j}}\xrightarrow{(4)(5)}\gamma_j\underline{\gamma_i\overline{\gamma_i}\gamma_j\overline{\gamma_j}\gamma_i}\xrightarrow{\text{Lem.\ref{lem:ident-a}(a)}}\gamma_j\gamma_i\overline{\gamma_i}\overline{\gamma_j}$.
\smallbreak
\item[(1B)] We compute \\
$\gamma_j\overline{\gamma_i}\underline{\gamma_i\gamma_i}\xrightarrow{(1)}\gamma_j\underline{\overline{\gamma_i}\gamma_i}\xrightarrow{(1a)}\underline{\gamma_j\overline{\gamma_i}\gamma_i\overline{\gamma_i}}\gamma_i\xrightarrow{\text{rev. Lem.\ref{lem:ident-a}(c)}}\gamma_j\gamma_i\overline{\gamma_i}\gamma_i$.
\smallbreak
and

$\underline{\gamma_j\overline{\gamma_i}\gamma_i\overline{\gamma_i}}\xrightarrow{\text{rev. Lem.\ref{lem:ident-a}(c)}}\gamma_j\gamma_i\underline{\overline{\gamma_i}}\xrightarrow{(1)}\gamma_j\gamma_i\overline{\gamma_i}\overline{\gamma_i}$.

\smallbreak
\item[(1C)] We compute \\
$\gamma_j\underline{\overline{\gamma_i}\gamma_i\gamma_\ell}\xrightarrow{\text{Lem.\ref{lem:ident-a}(c)}}\underline{\gamma_j\overline{\gamma_i}\gamma_i\overline{\gamma_i}}\gamma_\ell\xrightarrow{\text{rev. Lem.\ref{lem:ident-a}(c)}}\gamma_j\gamma_i\overline{\gamma_i}\gamma_\ell$.
\smallbreak
and

$\gamma_j\underline{\overline{\gamma_i}\gamma_i}\overline{\gamma_\ell}\xrightarrow{(1a)}\underline{\gamma_j\overline{\gamma_i}\gamma_i\overline{\gamma_i}}\gamma_i\overline{\gamma_\ell}\xrightarrow{\text{rev. Lem.\ref{lem:ident-a}(c)}}\gamma_j\underline{\gamma_i\overline{\gamma_i}\gamma_i\overline{\gamma_\ell}}\xrightarrow{\text{Lem.\ref{lem:ident-a}(c)}}\gamma_j\gamma_i\overline{\gamma_i}\overline{\gamma_\ell}$.
\end{itemize}
\end{proof}

Lemma~\ref{lemma:general} implies that $\mathcal{O}_n$ exhibits \textit{contextual commutation} between generators $\alpha_i$ and $\beta_i$. That is, $\alpha_i$ and $\beta_i$ do not freely commute in general, but can when multiplied by a certain third element (in our case, any element with index $k$, where $|i-k|\geq 2$). In \cite{Contextual}, the authors explore what they call a \textit{contextual partial commutation} monoid, or \textit{c-trace} monoid, which is a generalization of a \textit{partial commutation} monoid, or \textit{trace} monoid. A \textit{c-trace monoid} is defined to be the quotient of a free monoid by relations $cab=cba$ for some generators $a,b,$ and $c$. Unlike c-trace relations, in $\mc O_n$, we find unexpected occurrences of contextual commutation not only between generators which are left or right multiplied by a third element, but also between two elements when multiplied by \textit{any} other elements on \textit{both} sides, as in the case of $x\alpha_i\beta_iy=x\beta_i\alpha_iy$.

\medskip
\begin{corollary}\label{cor:presentation}
    The origami monoid $\mc O_n$ has presentations defined with generators $\Sigma_n$ and relations generated with rules (1) -- (5), (1a), (2a) and (3a).
\end{corollary}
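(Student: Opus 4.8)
The plan is to observe that Corollary~\ref{cor:presentation} is equivalent to the redundancy of rules (2b) and (3b) of Definition~\ref{origami-monoid}: if each of (2b), (3b) is a consequence of rules (1)--(5), (1a), (2a), (3a), then the two rule sets generate the same congruence on $\Sigma_n^*$ and hence present the same monoid. Since Lemmas~\ref{lemma:identities} and~\ref{lemma:general} are themselves proved using only rules (1)--(5), (1a), (2a), (3a), all of their conclusions --- in particular the contextual commutation $x\,\gamma_i\overline{\gamma_i}\,y \leftrightarrow x\,\overline{\gamma_i}\gamma_i\,y$ of Lemma~\ref{lemma:general}(ii) --- are available without circularity. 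As usual, (2b) and (3b) are the cases $j=i+1$ and $j=i-1$ of one identity, so it suffices to argue uniformly for $|i-j|=1$ (citing (2)/(3) according to the sign). For brevity write $a=\gamma_i$, $b=\overline{\gamma_i}$, $c=\gamma_j$, $d=\overline{\gamma_j}$; then $aca\to a$ and $bdb\to b$ by (2)/(3), $ad\leftrightarrow da$ by (4), $cd\leftrightarrow dc$ and $ab\leftrightarrow ba$ in nonempty context by Lemma~\ref{lemma:general}(ii), and $cc\to c$, $aa\to a$ by (1).

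The one non-routine step --- and the place where contextual commutation genuinely does the work --- is that although $cdc=\gamma_j\overline{\gamma_j}\gamma_j$ is irreducible on its own, the word $acdca$ collapses:
$$ acdca \;\xrightarrow{\text{Lem.~\ref{lemma:general}(ii)}}\; adcca \;\xrightarrow{(1)}\; adca \;\xrightarrow{(4)}\; daca \;\xrightarrow{(2)(3)}\; da, $$
where Lemma~\ref{lemma:general}(ii) is applied to the factor $acdc$, with the flanking $a$ and $c$ serving as context. Granting this, the rest is a short assembly: starting from the left side of (2b)/(3b),
$$ \gamma_i\overline{\gamma_i}\gamma_i\cdot\gamma_j\overline{\gamma_j}\gamma_j\cdot\gamma_i\overline{\gamma_i}\gamma_i \;=\; ab\,(acdca)\,ba \;\rightarrow\; ab\,(da)\,ba \;=\; abdaba, $$
then Lemma~\ref{lemma:general}(ii) applied to the factor $daba$ gives $abdaba \rightarrow abdbaa \xrightarrow{(1)} abdba$, and finally $abdba = a\,(bdb)\,a \xrightarrow{(2)(3)} aba = \gamma_i\overline{\gamma_i}\gamma_i$. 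This is precisely rule (2b); rule (3b) is the identical computation with $\gamma_{i-1}$ in place of $\gamma_{i+1}$, and I would present both at once with the ``(2)(3)'' convention already in use in the section.

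Everything above is elementary once the reduction is found; the only care required is the bookkeeping of generator types, since when $\gamma=\beta$ one has $cd=\beta_j\alpha_j$ rather than $\alpha_j\beta_j$, so each application of Lemma~\ref{lemma:general}(ii) should be read together with the fact that every rule and its reverse hold (see the remark preceding Lemma~\ref{lemma:identities}); the statement of Lemma~\ref{lemma:general}(ii) is insensitive to this swap, so both choices of $\gamma$ are handled simultaneously. I expect the main --- indeed only --- obstacle to be spotting the collapse $acdca\to da$; after that the reduction to $aba$ is forced and the corollary follows immediately.
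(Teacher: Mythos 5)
Your proposal is correct and follows the same strategy as the paper: observe that Lemma~\ref{lemma:general} (and Lemma~\ref{lemma:identities}) are proved without invoking (2b) or (3b), so contextual commutation is available non-circularly, and then use it to derive (2b) and (3b) from the remaining rules. The only difference is that the paper merely asserts that the derivation can be carried out, whereas you supply the explicit rewriting chain (the collapse $acdca \to da$ and the subsequent reduction to $aba$), which checks out and is a worthwhile addition of detail.
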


\begin{proof}
    In deriving the relations of contextual commutation given by Lemma~\ref{lemma:general}, none of the rules (2b) and (3b) were used. Therefore, we can apply contextual commutation and derive these two rules, i.e., these rules are  consequences of the other defining rules.
\end{proof}
%%%%%%%%%%%%%%%%%%%%%%%%%%%%%%%%
\section{Towards normal forms}
\label{sec:normal}
%%%%%%%%%%%%%%%%%%%%%%%%%%%%%%%%

 Using normal forms for representing elements of Jones monoids and the newly derived rules in Section~\ref{sec:writing}, 
we propose normal forms for elements in origami monoids.
 
Let ${\mathcal J}_n$ be the Jones monoid  
with generators $h_i$, $i=1, \ldots, n-1$.
We  denote the submonoid of $\mathcal{O}_n$ generated by $\alpha$'s
(resp. $\beta$'s),
by $\mathcal{O}^\alpha_n$ (resp. $\mathcal{O}^\beta_n$). An equivalent description for $\mathcal{O}^\alpha_n$ is the set of all words consisting of only $\alpha$'s (plus the empty word), and similarly for $\mathcal{O}^\beta_n$. Let $\mathcal{O}^{\alpha \beta}_n= [\mathcal{O}_n \setminus (\mathcal{O}^\alpha_n \cup \mathcal{O}^\beta_n)] \cup \{1\}$. 
In \cite{GarrettJKS19}, it was derived  that 
$\mathcal{O}^{\alpha \beta}_n$ is a submonoid of $\mathcal{O}_n$.
Using Lemma \ref{lemma:general}, we obtain a general form for any element of $\mathcal{O}_n$.

\medskip
\begin{corollary}\label{cor:form}
For every element $w\in \mc O_n$ there are $\gamma_1,\gamma_2\in \Sigma_n\cup\{1\}$ and $u\in \mc O^\alpha_n$, $v\in \mc O^\beta_n$ such that $w=\gamma_1 uv \gamma_2$. Moreover, if $i$ is the index of the first letter of $u$ and $j$ is the index of the last letter of $v$, then $\gamma_1 \in \{1, \beta_i\}$ and $\gamma_2 \in \{1, \alpha_j \}$.
\end{corollary}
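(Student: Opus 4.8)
The plan is to massage an arbitrary word $w \in \mc O_n$ into the claimed shape by repeatedly pushing $\beta$-letters to the right past $\alpha$-letters and vice versa, using the inter-commutation rule (4), the intra-commutation rule (5), and — crucially — the contextual commutation of Lemma~\ref{lemma:general}. First I would observe that by rule (4), any $\beta_j$ commutes past any $\alpha_i$ with $i \ne j$, and by rule (5) any two generators with indices differing by at least $2$ commute. So the only obstruction to sorting all $\alpha$'s to the left of all $\beta$'s is an adjacent pair of the form $\alpha_i\beta_i$ or $\beta_i\alpha_i$ with equal index. Here Lemma~\ref{lemma:general}(ii) is exactly the tool needed: $x\alpha_i\beta_i y \to x\beta_i\alpha_i y$ for arbitrary $x,y$, which lets us swap such a same-index pair inside any context. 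Thus by induction on, say, the number of ``inversions'' (pairs of positions where a $\beta$ precedes an $\alpha$), one shows every $w$ is equivalent to a word of the form $u'v'$ with $u' \in \mc O^\alpha_n$ and $v' \in \mc O^\beta_n$. I should be a little careful that a swap via Lemma~\ref{lemma:general}(ii) does not create new inversions elsewhere — it only transposes two adjacent letters, so the inversion count strictly drops, and the induction goes through.

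Having written $w = u'v'$, the next step is to extract the boundary letters $\gamma_1,\gamma_2$. Write $u' = \gamma_i^{(1)}\gamma_i^{(2)}\cdots$ listing $\alpha$-letters and look at the \emph{first} letter: it is some $\alpha_p$. If $p$ is the index $i$ we want to record, there is nothing to do and we simply take $\gamma_1 = 1$; the interesting case is when the first letter of the fully-sorted word is a $\beta$. But $u' \in \mc O^\alpha_n$ has only $\alpha$'s, so the first letter of $u'v'$ is a $\beta$ only when $u'$ is empty, i.e. $w = v' \in \mc O^\beta_n$; then $\gamma_1$ can be taken to be the leading $\beta_i$ itself and $u$ empty. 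More generally, to get the refined statement that $\gamma_1 \in \{1,\beta_i\}$ where $i$ is the index of the first letter of $u$, I would argue as follows: after sorting, if the word still begins with a $\beta_j$ followed (eventually) by $\alpha$'s, use rule (4) to slide that $\beta_j$ rightward past every $\alpha$-letter of a different index; it can only be stopped by an $\alpha_j$ of the \emph{same} index $j$. So either $\beta_j$ disappears to the right entirely (absorbed into $v$), in which case $\gamma_1 = 1$, or the word looks like $\beta_j \alpha_j (\cdots)$ — and then $i = j$, so $\gamma_1 = \beta_i$ with $i$ the index of the first $\alpha$-letter, as claimed. The symmetric argument on the right end, pushing a trailing $\alpha_k$ leftward past $\beta$'s of different index until blocked by $\beta_k$, gives $\gamma_2 \in \{1,\alpha_j\}$ with $j$ the index of the last letter of $v$. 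One can also just invoke the reverse-word symmetry (the Remark) to get the right-hand statement from the left-hand one.

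The main obstacle I anticipate is purely bookkeeping: making the induction on inversions airtight while simultaneously keeping track of which same-index swaps are legal. Rule (4) handles \emph{different}-index swaps freely, but a same-index transposition $\alpha_i\beta_i \leftrightarrow \beta_i\alpha_i$ is only available via Lemma~\ref{lemma:general}(ii), whose hypothesis requires flanking letters $x$ and $y$ — and at the very boundary of the word there may be no such flanking letter. This is precisely why the statement cannot promise $w = uv$ outright and must allow the correction terms $\gamma_1,\gamma_2$: a boundary same-index pair that cannot be swapped is exactly what gets absorbed into $\gamma_1$ or $\gamma_2$. So the clean way to organize the proof is: (1) sort the \emph{interior} completely using Lemma~\ref{lemma:general}(ii) together with rules (4),(5), leaving at most a possible ``wrong-order'' $\beta_i$ at the very front and a possible ``wrong-order'' $\alpha_j$ at the very back; (2) peel those off as $\gamma_1$ and $\gamma_2$; (3) verify the index constraints by the sliding argument above. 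Everything else is an application of the already-established rules and lemmas.
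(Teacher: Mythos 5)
Your proposal is correct and follows essentially the same route as the paper's proof: sort $\alpha$'s left of $\beta$'s using rule (4) for distinct indices and Lemma~\ref{lemma:general}(ii) for equal-index pairs in the interior, then dispose of a leading $\beta_k$ with $k\neq i$ (and symmetrically a trailing $\alpha$) by sliding it through $u$ into $v$. Your explicit induction on inversions and your identification of the boundary obstruction as the reason the correction terms $\gamma_1,\gamma_2$ are needed are just a more careful write-up of the same argument.
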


\begin{proof}
   Let $w \in \mathcal{O}_n$. If $w=x\beta_k \alpha_m y$ for $x,y \in \mathcal{O}_n$ and $m \neq k$, then we can rewrite $w$ as $x \alpha_m \beta_k y$. In addition, by Lemma \ref{lemma:general}.(ii), if $x$ and $y$ are not empty, then if $w=x \beta_k \alpha_k y$, we can rewrite $w$ as $x \alpha_k \beta_k y$. Thus, except for the first and last letters, all the $\alpha$'s in $w$ can be moved to the beginning of the word and all $\beta$'s to the end of the word. That is, $w=\gamma_1 uv \gamma_2$, for $u, \gamma_2 \in \mathcal{O}_n^{\alpha}$, $v, \gamma_1 \in \mathcal{O}_n^{\beta}$, and $|\gamma_1|,|\gamma_2| \leq 1$. Let $i$ be the index of the first letter of $u$ and $j$ the index of the last letter of $v$.
   
   Let $\gamma_1 \not =1$. 
   If $\gamma_1 =\beta_k$, where $k \neq i$, then we can switch $\beta_k$ and $\alpha_i$, then move $\beta_k$ past all the $\alpha$'s to the end of the word (by the inter-commutation rule (4) and contextual commutation from Lemma~\ref{lemma:general}).  
   We then write $w$ as $u\beta_k v\gamma_2$, and we  consider $1$ as our new $\gamma_1$. Otherwise, we have $\gamma_1 = \beta_i$.  
   Similarly, by reversing the argument, we have that $\gamma_2 \in \{1, \alpha_j \}$.
\end{proof}

For $j \geq i$, define $h[j,i]=h_jh_{j-1}\cdots h_{i+1}h_i$. An element of $\mathcal{J}_n$, represented as a word over $\{h_1, \dots, h_{n-1}\}$, is said to be in \textit{Jones normal form} (J.n.f.) if it is of the form $h[j_1,i_1]\cdots h[j_k,i_k]$, where $j_1 < j_2 < \cdots < j_k$ and $i_1 < i_2 < \cdots < i_k$ \cite{BDP}. Define the normal forms for elements of $\mathcal{O}_n^{\alpha}$ and $\mathcal{O}_n^{\beta}$ to be the J.n.f. by replacing $h_i$ with $\alpha_i$ and $\beta_i$, respectively. We then have the following description to represent elements of $\mc O_n$.

\medskip
\begin{definition}\label{def:RegularForm}
    Let $w$ be an element of $\mathcal{O}_n$ represented as a word over $\Sigma_n$. Let $u$ be some word in $\mathcal{O}_n^{\alpha}$ which is in J.n.f. and let $v$ be the same in $\mathcal{O}_n^{\beta}$. Let $i$ be the index of the first letter of $u$ and $j$ be the index of the last letter of $v$. The $\textit{regular form}$ of $w$ is one of the following:
    \begin{enumerate}
        \item $uv$
        \item $\gamma_1uv$ or $uv\gamma_2$
        \item $\gamma_1uv\gamma_2$
    \end{enumerate}
    where in each case, $\gamma_1=\beta_i$ and $\gamma_2=\alpha_j$. In addition, the priorities of regular form are in the order listed. That is, if $w$ can be be written in form $(1)$, then $(1)$ is the regular form of $w$. Otherwise, if $w$ can be written in form $(2)$, then $(2)$ is the regular form of $w$. If $w$ can not be written in the form $(1)$ or $(2)$, then $(3)$ is the regular form of $w$. If $w$ can be written in the form $(2)$ in multiple  ways, then the shortest among them, if unique,  is the regular form of $w$. If the shortest is not unique, the form with  
    $\gamma_2=1$ is the regular form. The same holds if $w$ can be written in form $(3)$ in multiple ways.
\end{definition}

The definition above ensures that a regular form associated with an element in $\mc O_n$ is unique. Note that the only case when a word can be written in two ways of form $(2)$ of the same length is $\beta_i\alpha_i$ for some $i$, so we specify the regular form in this case. Lemmas \ref{lemma:normalform4} and \ref{lemma:normalform5} give examples of words being written in two ways of form $(2)$ with different lengths.
Our approach towards finding normal forms of ${\cal O}_n$  is to examine explicit words in a regular form.

\medskip
\begin{lemma}\label{lemma:normalform1}
  Let $w=\gamma_1uv\gamma_2$ be an element of $\mathcal{O}_n$ in a regular form with $\gamma_1 \neq 1$. If the J.n.f. 
  of $u$ is $\alpha[j_1,i_1]\cdots \alpha[j_k,i_k]$, then we must have that $j_{m+1}=j_m+1$ for $m=1, \dots, k-1$.
\end{lemma}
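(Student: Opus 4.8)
The plan is to argue by contradiction from the priority ordering built into Definition~\ref{def:RegularForm}. Since $\gamma_1\neq 1$, Corollary~\ref{cor:form} forces $\gamma_1=\beta_{j_1}$, where $j_1$ is the index of the first letter $\alpha_{j_1}$ of $u$. Assume, contrary to the statement, that $j_{m+1}\ge j_m+2$ for some $m$, and take $m$ minimal, so that $j_1,\dots,j_m$ are consecutive.

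The key step is to show that, under this assumption, the leading letter $\beta_{j_1}$ can be commuted past all of $u$, i.e.\ $\beta_{j_1}u=u'\beta_{j_1}$ in $\mathcal O_n$ for some word $u'$ over $\{\alpha_1,\dots,\alpha_{n-1}\}$. The rewrites used to establish this only permute $\alpha$'s among themselves (rule $(5)$, which on the $\alpha$-subword is just the Jones commutation) and slide $\beta_{j_1}$ past letters of different index, so $u'$ is equal to $u$ in $\mathcal O^\alpha_n$; as $u$ is in J.n.f., $u$ is also the J.n.f.\ of $u'$. Granting this, $w=\beta_{j_1}uv\gamma_2=u'\beta_{j_1}v\gamma_2=u\,(\beta_{j_1}v)\,\gamma_2$, and putting the pure-$\beta$ word $\beta_{j_1}v$ into J.n.f.\ as $v''$ exhibits $w=u\,v''\,\gamma_2$, a representative with no leading $\beta$, hence of form $(1)$ or $(2)$ in Definition~\ref{def:RegularForm}. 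If $\gamma_2=1$ then $u v''$ is already of form $(1)$; if $\gamma_2\neq 1$ then $u v''\gamma_2$ is of form $(1)$ or $(2)$. In either case it has strictly higher priority than $\gamma_1 uv\gamma_2$, which is of form $(2)$ with a leading $\beta$ when $\gamma_2=1$ and of form $(3)$ when $\gamma_2\neq 1$. This contradicts $\gamma_1 uv\gamma_2$ being the regular form of $w$, so no gap exists and $j_{m+1}=j_m+1$ for all $m$.

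The remaining task, proving $\beta_{j_1}u=u'\beta_{j_1}$, is what I expect to be the main obstacle. By rule $(4)$, $\beta_{j_1}$ commutes with every letter of index $\neq j_1$, so it suffices to push $\beta_{j_1}$ past the occurrences of the letter $\alpha_{j_1}$ in $u$ (at most one per block of the J.n.f.). For a given such occurrence one uses rules $(4)$ and $(5)$ to shuffle neighbouring letters and bring some generator immediately to the left of that $\alpha_{j_1}$, reaching a factor $x\,\beta_{j_1}\alpha_{j_1}\,y$ with $x,y$ single generators; then Lemma~\ref{lemma:general}(ii), read in reverse, turns this into $x\,\alpha_{j_1}\beta_{j_1}\,y$, advancing $\beta_{j_1}$ one place. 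The hypothesis $j_{m+1}\ge j_m+2$ is exactly what guarantees that a suitable $x$ can always be produced next to $\beta_{j_1}\alpha_{j_1}$: when no such gap is present—e.g.\ when $u=\alpha[j_1,i_1]$ is a single block—the prefix $\beta_{j_1}\alpha_{j_1}\alpha_{j_1-1}\cdots$ genuinely cannot be rearranged, which is why the condition $\gamma_1\neq 1$ really does restrict $u$.

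The technical core is therefore the bookkeeping in this last step: one must check that after each move $\beta_{j_1}$ can still be brought up to the next occurrence of $\alpha_{j_1}$ and that a generator of index at distance $\geq 2$ from $j_1$ remains available on the side where it is needed. I would organize this by induction on the number of blocks of $u$ (equivalently, on the number of occurrences of $\alpha_{j_1}$), arguing that the gap persists through the inductive step so that the required helper generator is present throughout.
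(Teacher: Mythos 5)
Your proposal follows essentially the same route as the paper's proof: assume a gap $j_{m+1}>j_m+1$, use it to carry the leading $\beta_{j_1}$ through $u$, and conclude from the priority ordering of Definition~\ref{def:RegularForm} that $\gamma_1uv\gamma_2$ could not have been the regular form. The difference is that you defer the step you correctly identify as the crux, namely $\beta_{j_1}u=u'\beta_{j_1}$, proposing an induction over the blocks of $u$ with careful bookkeeping at every occurrence of $\alpha_{j_1}$. The paper shows this step is a single short maneuver, with no induction needed: taking $m$ minimal with $j_{m+1}>j_m+1$, every index occurring in the blocks $\alpha[j_1,i_1],\dots,\alpha[j_m,i_m]$ is at most $j_m\le j_{m+1}-2$, so by intra-commutation (rule (5)) the letter $\alpha_{j_{m+1}}$ can be slid leftward to sit immediately after the initial $\alpha_{j_1}$; one application of Lemma~\ref{lemma:general}(i) to the prefix $\beta_{j_1}\alpha_{j_1}\alpha_{j_{m+1}}$ then flips $\beta_{j_1}$ past $\alpha_{j_1}$, after which rule (4) moves $\beta_{j_1}$ past every letter of index $\ne j_1$ and Lemma~\ref{lemma:general}(ii) moves it past any further occurrence of $\alpha_{j_1}$ — the gap hypothesis is not needed again, since such occurrences are interior and therefore already have the two neighbouring letters that Lemma~\ref{lemma:general}(ii) requires as context. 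So your outline is sound and your identification of where $\gamma_1\ne 1$ genuinely constrains $u$ (the single-block case) is apt, but the ``technical core'' you leave open is precisely the content of the paper's proof, and it is resolved by this one explicit rearrangement rather than by the block-by-block induction you sketch.
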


\begin{proof}
   We prove the contrapositive. Let $1 \leq m \leq k-1$ be the smallest $m$ such that $j_{m+1} > j_{m}+1$. Note $j_{m+1} - i_{m} \geq j_{m+1}-j_m \geq 2$, so we can rewrite $u$ as $$\alpha[j_1,i_1]\cdots\alpha_{j_{m+1}}\alpha[j_m,i_m]\alpha[j_{m+1}-1,i_{m+1}]\cdots \alpha[j_k,i_k].$$ But we also have $j_{m+1} - i_{m-1} \geq j_{m+1} -j_{m-1} > j_{m+1}=j_m \geq 2$, so we can repeat this process to rewrite $u$ as $\alpha_{j_1}\alpha_{j_{m+1}}\alpha[j_1-1,i_1]\cdots \alpha[j_{m+1}-1,i_{m+1}] \cdots \alpha[j_k,i_k]$. Finally, since $j_{m+1}-j_1 > j_{m+1}-j_{m} \geq 2$, by Lemma \ref{lemma:general}.(ii), we can switch the position of $\gamma_1$ and $\alpha_{j_1}$ in $w$. Then, again by Lemma \ref{lemma:general}.(ii), we may rewrite $w$ as $u\gamma_1v\gamma_2$. Thus the regular form of $w$ does not have $\gamma_1 \neq 1$. 
\end{proof}

\begin{figure}[h]
    \centering
\begin{tikzpicture}[scale=.8]
\begin{axis}[
xlabel=Position,
ylabel=Index,
xtick={0,...,8},
ytick={0,...,5}
]
\addplot[color=red] coordinates {
(1,3)
(2,2)
(3,1)
};
\addplot[color=red] coordinates {
(4,4)
(5,3)
(6,2)
};
\addplot[color=red] coordinates {
(7,5)
(8,4)
};

\addplot[blue, very thick, dotted] coordinates {
(1,3)
(4,4)
(7,5)
};
\end{axis}

\end{tikzpicture}
\caption{Graphical representation of index values of $u=\alpha_3\alpha_2\alpha_1\alpha_4\alpha_3\alpha_2\alpha_5\alpha_4$ in  Lemma \ref{lemma:normalform1}}
\label{plot1}
    \end{figure}
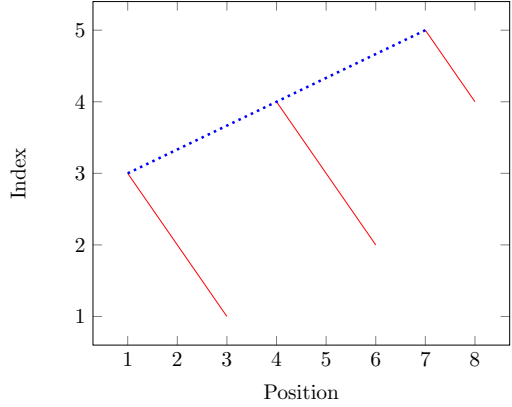
A graphical representation of index values of an example of a word 
$u$ described in Lemma~\ref{lemma:normalform1} is shown in Fig.~\ref{plot1}. The piecewise graph, in red, represents the values of the indices of 
$\alpha$ as a word 
$u$ in J.n.f. is scanned from left to right. Each red line constitutes one block 
$\alpha[j_s,i_s]$ in 
$u$. The dotted blue line shows the restriction on the first index in each block imposed by Lemma \ref{lemma:normalform1} such that the first index  
of each block increases by $1$ with every consecutive block. 
 By reversal of the argument in Lemma~\ref{lemma:normalform1}, we have the following corollary.

\medskip
\begin{corollary}\label{cor:normalform2}
 Let $w = \gamma_1 uv \gamma_2$ be an element of $\mathcal{O}_n$ in a regular form with $\gamma_2 \neq 1$. If the J.n.f.  
  of $v$ is $\beta[j_1,i_1]\cdots \beta[j_k,i_k]$, then we must have that $i_{m+1}=i_m+1$ for $m=1, \dots, k-1$.
\end{corollary}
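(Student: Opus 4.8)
The statement is explicitly asserted to follow ``by reversal of the argument in Lemma~\ref{lemma:normalform1}''. The plan is to make this precise via the reversal symmetry of the rewriting system combined with the bar involution. Recall from the Remark preceding Lemma~\ref{lemma:identities} that whenever $w_1 \to w_2$ is a derived rewriting rule, the reversed rule $w_1^R \to w_2^R$ also holds, since each defining rule of $\mathcal{O}_n$ is closed under the reverse operation. Moreover, the bar involution $\gamma_i \mapsto \overline{\gamma_i}$ (with $\overline{\alpha_i} = \beta_i$, $\overline{\beta_i} = \alpha_i$) is an automorphism of the free monoid that also preserves all defining rules (idempotence, both Jones relations, inter-commutation rule (4), intra-commutation rule (5), and the derived rules (1a), (2a), (3a)), hence induces an automorphism of $\mathcal{O}_n$. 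Therefore the composite operation $w \mapsto \overline{w^R}$, sending a word to its reverse with $\alpha$'s and $\beta$'s interchanged, is an anti-automorphism of $\mathcal{O}_n$.

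First I would observe how this anti-automorphism acts on regular forms. If $w = \gamma_1 u v \gamma_2$ is in regular form with $u \in \mathcal{O}_n^\alpha$ in J.n.f., $v \in \mathcal{O}_n^\beta$ in J.n.f., $\gamma_1 \in \{1,\beta_i\}$, $\gamma_2 \in \{1,\alpha_j\}$, then $\overline{w^R} = \overline{\gamma_2}\, \overline{v^R}\, \overline{u^R}\, \overline{\gamma_1}$. Here $\overline{\gamma_2} \in \{1, \beta_j\}$ plays the role of the new left cap, $\overline{v^R} \in \mathcal{O}_n^\alpha$, $\overline{u^R} \in \mathcal{O}_n^\beta$, and $\overline{\gamma_1} \in \{1,\alpha_i\}$ the new right cap. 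The one point requiring a short check is that reversing a word in J.n.f. and renaming letters again yields a word in J.n.f.\ (of the correspondingly reversed/renamed element): this is the standard fact that the set of J.n.f.\ words is closed under the reversal anti-automorphism of $\mathcal{J}_n$, which one sees directly from the shape $h[j_1,i_1]\cdots h[j_k,i_k]$ with $j_1 < \cdots < j_k$, $i_1 < \cdots < i_k$ — its reversal rearranges to $h[j_k', i_k']\cdots$ with the index inequalities again strictly increasing. I would also note that the priority ordering in Definition~\ref{def:RegularForm} is symmetric under $w \mapsto \overline{w^R}$ (it swaps the two options in case (2) and the $\gamma_1 = 1$ versus $\gamma_2 = 1$ tie-break), so ``regular form'' is sent to ``regular form.''

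Next I would run the deduction. Suppose $w = \gamma_1 u v \gamma_2$ is in regular form with $\gamma_2 \neq 1$ and the J.n.f.\ of $v$ is $\beta[j_1,i_1]\cdots\beta[j_k,i_k]$. Apply the anti-automorphism: $w' := \overline{w^R}$ is in regular form, its left cap $\overline{\gamma_2} = \beta_j \neq 1$, and (by the closure of J.n.f.\ under reversal-plus-bar, together with the explicit effect on indices) its $\alpha$-part is in J.n.f.\ with first-indices obtained from the second-indices $i_1 < \cdots < i_k$ of $v$ in reverse order as the sequence of \emph{first} indices of the blocks of the reversed word. Lemma~\ref{lemma:normalform1} applied to $w'$ (whose left cap is nontrivial) forces consecutive first-indices of its $\alpha$-part to increase by exactly $1$; translating back through the reversal, this says precisely $i_{m+1} = i_m + 1$ for $m = 1,\dots,k-1$ in the J.n.f.\ of $v$, which is the claim.

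\textbf{Expected main obstacle.} The only nontrivial point — and the one I would write out carefully rather than wave at — is verifying that the reversal-plus-bar operation carries a word in J.n.f.\ to a word in J.n.f.\ and tracking exactly which index sequence ends up in the ``first index of each block'' role; everything else is formal. Concretely, one must check that if $v = \beta[j_1,i_1]\cdots\beta[j_k,i_k]$ with $j_1<\cdots<j_k$ and $i_1<\cdots<i_k$, then $\overline{v^R}$, after re-expressing in J.n.f., has block-structure whose sequence of first indices is (a strictly increasing rearrangement determined by) the $i_m$'s, so that the constraint ``first indices go up by $1$'' from Lemma~\ref{lemma:normalform1} becomes ``the $i_m$'s go up by $1$.'' This is a purely combinatorial fact about Jones normal forms; once it is in hand, the corollary is immediate.
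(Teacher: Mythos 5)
Your high-level strategy — exploit the fact that $w\mapsto\overline{w^R}$ is an anti-automorphism preserving the defining rules, so that Corollary~\ref{cor:normalform2} should be the mirror image of Lemma~\ref{lemma:normalform1} — is exactly the symmetry the paper is invoking with the phrase ``by reversal of the argument.'' However, the step you yourself flag as ``the one point requiring a short check'' is where the proposal breaks, and the specific combinatorial claim you make there is false. Reversal does \emph{not} carry a word in Jones normal form to a word in Jones normal form with the same block structure: the number of blocks can change. For instance, $v=\beta_1\beta_2=\beta[1,1]\beta[2,2]$ has two blocks with second indices $(1,2)$, while $\overline{v^R}=\alpha_2\alpha_1=\alpha[2,1]$ is a \emph{single} block with first index $2$; likewise $h[3,1]=h_3h_2h_1$ (one block) reverses to $h_1h_2h_3=h[1,1]h[2,2]h[3,3]$ (three blocks). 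Worse, the first-index sequence of the J.n.f.\ of $\overline{v^R}$ is not a function of the second-index sequence of the J.n.f.\ of $v$ alone: $\beta[1,1]\beta[2,2]$ and $\beta[2,1]\beta[3,2]$ both have $\mathbf{i}=(1,2)$, yet their reversals have first-index sequences $(2)$ and $(2,3)$ respectively. So the translation ``first indices of $\overline{v^R}$ consecutive $\Leftrightarrow$ last indices of $v$ consecutive,'' which your argument needs in order to pull the conclusion of Lemma~\ref{lemma:normalform1} back through the anti-automorphism, is a genuinely nontrivial statement about how reversal acts on Jones normal forms; it appears to be true in examples, but you neither state it correctly nor prove it, and proving it is at least as much work as the corollary itself.

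The intended (and cleaner) reading of ``reversal of the argument'' is to mirror the \emph{proof} of Lemma~\ref{lemma:normalform1} directly on $v$ in its own J.n.f., never touching the J.n.f.\ of $v^R$. Concretely: suppose $i_{m+1}>i_m+1$ for some (largest) $m$. Since every letter of blocks $m+1,\dots,k$ has index at least $i_{m+1}\geq i_m+2$, the \emph{last} letter $\beta_{i_m}$ of block $m$ commutes rightward by rule (5) past all subsequent blocks until it sits immediately before the final letter $\beta_{i_k}$ of $v$; since $\gamma_2=\alpha_j$ with $j=i_k$ and $|i_m-i_k|\geq 2$, contextual commutation (Lemma~\ref{lemma:general}(i)) lets you swap $\beta_{i_k}\alpha_{i_k}$ into $\alpha_{i_k}\beta_{i_k}$, after which $\alpha_{i_k}$ migrates left past all the $\beta$'s by rule (4) and Lemma~\ref{lemma:general}(ii) and is absorbed into $u$, exhibiting $w$ in a higher-priority form — contradicting regularity. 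The roles of the first indices $j_m$ and the last indices $i_m$ swap automatically because in a decreasing block $\beta_{j_m}\cdots\beta_{i_m}$ the letter adjacent to the \emph{next} block is the one indexed by $i_m$, whereas in Lemma~\ref{lemma:normalform1} the relevant letter is the one indexed by $j_{m+1}$ adjacent to the \emph{previous} block. This mirrored derivation is legitimate precisely because every rule used is closed under reversal and under the bar involution, and it requires no analysis of normal forms of reversed words. As written, your proof has a gap that cannot be closed by a ``short check.''
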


In the following corollaries and lemmas, we fix $i$ to be the index of the first symbol in $u$ and $j$ to be the index of the last symbol in $v$.

\medskip
\begin{lemma}\label{lemma:normalform3}
  Let $w = \gamma_1 uv \gamma_2$ be an element of $\mathcal{O}_n$ in a regular form with 
  $\gamma_1=\beta_i$.  If the J.n.f.  
  of $v$ is $\beta[j_1,i_1]\cdots \beta[j_k,i_k]$, then we must have that $|j_1 -i| \leq 1$. Also, we must have $j_{m+1}=j_m+1$ for $m=2, \dots, k-1$, and either $j_{2}=j_1+1$ or $j_1=i-1$, $j_2=i+1$.
\end{lemma}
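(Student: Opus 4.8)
The plan is to prove each of the three assertions by contraposition, exploiting that by Definition~\ref{def:RegularForm} the regular form of an element of $\mathcal{O}_n$ is unique and that forms $(1)$, $(2)$, $(3)$ are ranked by priority. Since the hypothesis $\gamma_1=\beta_i\neq1$ is exactly the hypothesis of Lemma~\ref{lemma:normalform1}, that lemma already gives $u=\alpha[i,b_1]\alpha[i+1,b_2]\cdots\alpha[i+\ell-1,b_\ell]$ with $b_1<\cdots<b_\ell$, so throughout we may treat the block tops of $u$ as the consecutive run $i,i+1,\dots,i+\ell-1$. For each condition I assume its negation and show that $w=\beta_i uv\gamma_2$ rewrites as $u'v'$ or $u'v'\gamma_2'$ with $u'\in\mathcal{O}_n^\alpha$, $v'\in\mathcal{O}_n^\beta$ in J.n.f.; such a representation has strictly higher priority than $\beta_i uv\gamma_2$ (which has $\gamma_1\neq1$), contradicting that the latter is the regular form of $w$.

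The engine is two facts about moving $\beta_i$ rightward. First, by the inter-commutation rule $(4)$, $\beta_i$ commutes with every $\alpha_m$ with $m\neq i$ — the gap condition $|m-i|\ge 2$ is needed only between two generators of the same type — so the only obstruction to sliding $\beta_i$ past $u$ is the copies of $\alpha_i$ in the blocks $\alpha[i+s-1,b_s]$ with $b_s\le i$. Second, once $\beta_i$ has a left neighbour other than $\alpha_i$ (or a companion $\beta$ of index $i\pm1$ nearby), the contextual commutation $x\beta_i\alpha_i y\to x\alpha_i\beta_i y$ of Lemma~\ref{lemma:general}(ii), together with the Jones relations $(2)$, $(3)$ and the identities of Lemma~\ref{lemma:identities} (notably part (c)), let $\beta_i$ cross those copies of $\alpha_i$ and emerge between $u$ and $v$; writing $w=u\,\beta_i\,v\,\gamma_2$ and absorbing $\beta_i$ into $v$ (re-expressing $\beta_i v$ in J.n.f.) then yields the higher-priority representation, exactly as at the end of the proof of Lemma~\ref{lemma:normalform1}. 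The content of the hypotheses on $j_1,\dots,j_k$ is that when they hold no such context can be manufactured, while when one fails the violating block of $v$ supplies it: splitting an offending block $\beta[j_s,i_s]$ of $v$ into $\beta_{j_s}\beta[j_s-1,i_s]$ (as in Lemma~\ref{lemma:normalform1}) and using $|j_s-i|\ge2$, one commutes $\beta_{j_s}$ leftward past the remaining blocks of $v$ and past $u$ down to $\beta_i$, supplying the needed neighbour; dually, Lemma~\ref{lemma:identities}(c) lets a trailing misfit of $v$ be swallowed, shortening $v$.

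Concretely I would split into cases mirroring the three assertions. (A) If $|j_1-i|\ge2$, then either $j_1\ge i+2$ — the first letter $\beta_{j_1}$ of $v$ commutes leftward past $u$ (whose indices lie in $[b_1,i+\ell-1]$), combined when necessary with a Jones reduction of the $\alpha_{j_1}$ occurring in $u$, and one is reduced to moving $\beta_{j_1}$ to the left of the leading $\alpha_i$ — or $j_1\le i-2$, in which case $i_1\le j_1\le i-2$, so $v$'s first block commutes past $\beta_i$ and past the low part of $u$, isolating $\beta_i\alpha_i$ in a two-sided context where Lemma~\ref{lemma:general}(ii) applies. (C), failure: if $j_2>j_1+1$ and $j_1\ge i$, then $|j_2-i|\ge2$, so $\beta_{j_2}$ (the top of the second block) commutes leftward past the first block and then, via the Jones and contextual relations, past $u$; the sub-case $j_1=i-1$, $j_2\ge i+2$ is identical since again $|j_2-i|\ge2$. (B), failure: if $j_{m+1}>j_m+1$ for some $m\ge2$, take the least such $m$ and run the argument of Lemma~\ref{lemma:normalform1} inside $v$ — equivalently, after $\beta_i$ has merged with $\beta[j_1,i_1]$ into a new left marker sitting before the second block, Lemma~\ref{lemma:normalform1} applied to that marker and $\beta[j_2,i_2]\cdots\beta[j_k,i_k]$ forces consecutive tops. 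In every case one arrives at $w=u'v'\gamma_2'$ with $\gamma_1=1$, the desired contradiction; and one checks that the configuration left by (A) and (C) is genuinely irreducible, because there $\beta_i$ is the unique scaffold crossover bridging index $i$ and no far-index lever is exposed at the front of $v$.

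The main obstacle is the bookkeeping forced by the fact that $\beta_i$ cannot cross a single $\alpha_i$ unaided while $u$ may carry several copies of $\alpha_i$: each case must create, then consume, the right context, iterating Lemma~\ref{lemma:general}(ii) and Lemma~\ref{lemma:identities} the correct number of times, all while keeping $u$ and $v$ in J.n.f. so that the priorities of Definition~\ref{def:RegularForm} actually govern the comparison. One must also dispatch the degenerate cases — $v=1$ and $v=\beta_i$ (which, as Lemma~\ref{lemma:identities}(c) shows, never occur as the $v$ of a regular form with $\gamma_1\neq1$, since a trailing $\beta_i$ over $\beta_i u\cdots$ gets absorbed) and the form-$(2)$ ambiguity for $\beta_i\alpha_i$ noted after Definition~\ref{def:RegularForm} — and verify that the representation produced really has the shape $u'v'$ or $u'v'\alpha_{j'}$ with $j'$ the last index of $v'$, rather than merely a word with no leading $\beta$. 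Packaging the recurring computations — pushing a single $\beta_j$ with $|j-i|\ge2$ through $\beta_i u$, and swallowing a trailing block of $v$ — as auxiliary lemmas (the role of Lemmas~\ref{lemma:normalform4} and~\ref{lemma:normalform5} mentioned above) is what makes the case analysis tractable.
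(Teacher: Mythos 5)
Your overall strategy matches the paper's: argue by contraposition, and show that whenever one of the index conditions on $v$ fails, a $\beta$ whose index is at distance $\ge 2$ from $i$ can be split off from $v$ and brought next to the leading $\beta_i\alpha_i$, where the contextual commutation of Lemma~\ref{lemma:general} lets $\beta_i$ cross $\alpha_i$ and migrate into $v$, yielding a representation of strictly higher priority in the sense of Definition~\ref{def:RegularForm}. Your cases (A), (B), (C) and the identification of $\beta_{j_1}$ (resp.\ $\beta_{j_{m+1}}$, $\beta_{j_2}$) as the far-index ``lever'' are exactly what the paper does when $|v|\ge 2$.

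The gap is the case $|v|=1$, i.e.\ $v=\beta_{j_1}$ with $|j_1-i|\ge 2$ (and in the worst instance $\gamma_2=1$). Here your engine stalls: to slide $\beta_{j_1}$ leftward through $u$ you must cross any occurrence of $\alpha_{j_1}$ in $u$, and Lemma~\ref{lemma:general}(ii) requires a nonempty context on \emph{both} sides, while Lemma~\ref{lemma:general}(i) requires the adjacent letter to have index at distance $\ge 2$ from $j_1$. If $u$ ends in $\alpha_{j_1}$ preceded by $\alpha_{j_1\pm 1}$ (e.g.\ $w=\beta_1\alpha_1\alpha_4\alpha_3\beta_3$ with $i=1$, $j_1=3$), neither move applies to the trailing $\alpha_{j_1}\beta_{j_1}$, and your phrase ``combined when necessary with a Jones reduction of the $\alpha_{j_1}$ occurring in $u$'' does not describe a legal rewriting. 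The paper devotes half of its proof to precisely this case: it expands $\beta_i\to\beta_i\beta_{i\pm1}\beta_i$ by the Jones relation, shuttles the inner $\beta_i$ rightward through $u$ (now with two-sided context available) to sit beside the last letter $\alpha_k$ of $u$, uses $|j_1-i|\ge 2$ to flip $\alpha_k\beta_{j_1}$ with $\beta_i$ as left context, and then unwinds to reach $\alpha_iu''\alpha_k\beta_i\beta_{j_1}\gamma_2$, which has no leading $\beta$. Your degenerate-case discussion covers only $v=1$ and $v=\beta_i$, not this configuration, so the proposal as written does not establish the first assertion $|j_1-i|\le 1$ in full.
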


\begin{proof}
   We first consider the case when $|v| \geq 2$. Suppose $|j_1 -i| > 1$. Write $u=\alpha_i u'$ and $v=\beta_{j_1}v'$. Then we can rewrite $w$ as $\gamma_1\alpha_i \beta_{j_1}u'v' \gamma_2$ by Lemma \ref{lemma:general} since $v'$ is not empty. Then since $|j_1-i| > 1$, we can switch $\gamma_1$ and $\alpha_i$ and then move $\gamma_1$ to the right to write $w$ as $u\gamma_1v \gamma_2$.
   
   Now suppose that $j_{m+1} > j_m +1$ for some $m$, other than the case where $j_1=i-1$ and $j_2=i+1$. As in the proof of Lemma~\ref{lemma:normalform1}, we can rewrite $w$ as $\gamma_1 \alpha_i \beta_{j_{m+1}} u'v'\gamma_2$. Then since $j_{m+1} > j_m +1 \geq j_1 +1$ and since  $m>1$ and $|j_1-i|\le 1$, we have 
   $j_{m+1}=i+1$ and  $|j_{m+1}-i| > 1$. Again, as in the proof of Lemma~\ref{lemma:normalform1}, we can switch $\gamma_1$ and $\alpha_i$ and move $\gamma_1$ to the right to rewrite $w$ as $u\gamma_1v\gamma_2$.
   
   Now consider the case when $|v|=1$, and write $j=j_1$, so $v=\beta_{j_1}=\beta_j$. Assume that $|j-i| > 1$. Assume without loss of generality that $i < n-1$, so we can rewrite $w$ as $\beta_i \beta_{i+1}\beta_i \alpha_i u'\beta_j \gamma_2$ (the case of $i=n-1$ follows exactly the same by replacing $i+1$ with $i-1$). Write $u'=u''\alpha_k$. Then by Lemma $\ref{lemma:general}$.(ii) we can rewrite $w$ as $\beta_i\beta_{i+1}\alpha_i u'' \beta_i \alpha_k\beta_j \gamma_2$. If $j \neq k$, then we can switch $\alpha_k$ and $\beta_j$, and otherwise we have  $|k-i|=|j-i| > 1$, so by 
   Lemma~\ref{lemma:general}.(ii) we have $$w \leftrightarrow \beta_i\beta_{i+1}\alpha_i u'' \beta_i \beta_j \alpha_k \gamma_2 \leftrightarrow  \beta_i\beta_{i+1}\beta_i\alpha_i \beta_j u''  \alpha_k \gamma_2 \leftrightarrow \beta_i\alpha_i \beta_j u''  \alpha_k \gamma_2.$$ Again, we have $|j-1 | > 1$, so $$w \leftrightarrow \alpha_i\beta_i \beta_j u''  \alpha_k \gamma_2  \leftrightarrow  \alpha_i u''  \beta_i \beta_j\alpha_k \gamma_2  \leftrightarrow  \alpha_i u''  \alpha_k\beta_i \beta_j \gamma_2.$$
\end{proof}

 The above lemma also holds when we consider $u$ and $\gamma_2=\alpha_j$. That is, if the J.n.f.
 of $u$ is $\alpha[j_1,i_1]\cdots \alpha[j_k,i_k]$, then we have that $|i_k-j|\leq 1$ because all rewriting rules are closed under the reverse operation $^R$.

\medskip
\begin{lemma}\label{lemma:normalform4}
  Let $w = \gamma_1 uv \gamma_2$ be an element of $\mathcal{O}_n$ in a regular form with 
  $\gamma_1=\beta_i$ and $v=\beta_iv'$ for some $v' \in \mathcal{O}_n$.
  If the J.n.f. 
   of $u$ is $\alpha[j_1,i_1]\cdots \alpha[j_k,i_k]$, we must have that $i_k=i$,  $i_{m+1}=i_m+1$ for $m=1, \dots, k-1$, and that $v'=\gamma_2=1$.
\end{lemma}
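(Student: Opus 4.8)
I will argue by contradiction against the priority ordering in Definition~\ref{def:RegularForm}: assuming one of the three asserted equalities fails, I will rewrite $w$ into an expression that stands earlier in the list of Definition~\ref{def:RegularForm}, or is of the same form but strictly shorter, which is impossible if $\gamma_1uv\gamma_2$ is the regular form of $w$. Two basic moves will be used throughout. First, inter-commutation (rule~(4)) lets an $\alpha$ and a $\beta$ of distinct indices pass each other. Second, by Lemma~\ref{lemma:general}(ii) the word $\alpha_i\beta_i$ can be exchanged for $\beta_i\alpha_i$ as soon as it is flanked by two letters. Combining these, a lone $\beta_i$ can be slid leftward across any run of $\alpha$'s, the only obstruction being that while crossing an $\alpha_i$ it must not stand at the extreme right of the word.

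I would first dispose of the conclusions $v'=\gamma_2=1$ and $i_k=i$, which are the easy cases. If $v'\ne 1$ or $\gamma_2\ne 1$, slide the leading $\beta_i$ of $v=\beta_iv'$ to the left through $u$; the flanking requirement for crossing the $\alpha_i$'s of $u$ is met because to the right of any such $\alpha_i$ there always remains more of $u$, or, when it is the last letter of $u$, the nonempty word $v'\gamma_2$. At the front, $\beta_i\beta_i\to\beta_i$ collapses the word by one letter, contradicting the shortest-length clause of Definition~\ref{def:RegularForm}; the boundary case $v'=1\ne\gamma_2$ (where $\gamma_2=\alpha_i$) is handled identically, the slid $\beta_i$ always keeping $\gamma_2$ or a letter of $u$ to its right, with one further use of rule~(1) finishing the collapse. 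Once $w=\beta_iu\beta_i$, the same trick works from the right: if the last letter $\alpha_{i_k}$ of $u$ has $i_k\ne i$, the trailing $\beta_i$ crosses $\alpha_{i_k}$ by rule~(4), then slides all the way to the left (it now always has a letter to its right) and merges with $\gamma_1$, again shortening $w$. Hence $i_k=i$.

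The substantive step is the staircase condition $i_{m+1}=i_m+1$. Here I would use that $\gamma_1=\beta_i\ne 1$, so Lemma~\ref{lemma:normalform1} forces $j_m=i+m-1$; combined with $i_1<\dots<i_k=i$ this reduces the claim to $i_1=i-k+1$, i.e.\ to the $i_m$ exhausting the interval $[i-k+1,i]$ (note $i_1\le i-k+1$ holds automatically). Assuming instead $i_1\le i-k$, the plan is to mimic the rewriting in the proof of Lemma~\ref{lemma:normalform1} to bring the top letter of the first block whose bottom index exceeds its staircase value to the front of $u$, then to Jones-expand $\gamma_1=\beta_i$ to $\beta_i\beta_{i\pm 1}\beta_i$ and use rule~(4) together with Lemma~\ref{lemma:general}(ii) to drive a $\beta$-letter through the whole of $u$; the reorganized word should then be of the form $u''v''$ or $\beta_iu''$ with $|u''v''|<|\beta_iu\beta_i|$, contradicting regular form. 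I expect this last step to be the main obstacle: the routine parts are the two reductions above, whereas here making precise which block is ``too short'', which commutations are legal, and which shorter (or form-$(1)$) representative actually results will require careful bookkeeping.
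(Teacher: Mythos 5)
Your first two reductions are correct and essentially identical to the paper's: if $v'\gamma_2\neq 1$ you slide the leading $\beta_i$ of $v$ leftward through $u$ (Lemma~\ref{lemma:general}(ii) applies because the migrating $\beta_i$ always keeps a letter on its right) and merge it with $\gamma_1$ by idempotence; and once $w=\beta_iu\beta_i$, if $i_k\neq i$ the trailing $\beta_i$ crosses $\alpha_{i_k}$ by rule~(4) and then escapes to the front the same way. Your reduction of the staircase condition to $i_1=i-k+1$ (using Lemma~\ref{lemma:normalform1} and the definition of $i$ as the index of the first letter of $u$, so $j_1=i$) is also sound.

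The genuine gap is the staircase condition itself, which you explicitly leave as a plan rather than a proof, and the plan as sketched is unlikely to go through. You propose to ``mimic the rewriting in the proof of Lemma~\ref{lemma:normalform1}'' to bring a letter of an offending block to the front of $u$; but that rewriting exploits a gap of size $\geq 2$ between consecutive \emph{top} indices $j_m$, and in the present situation Lemma~\ref{lemma:normalform1} has already forced $j_{m+1}=j_m+1$, so there is no such gap to exploit — the defect lives in the \emph{bottom} indices $i_m$, and the front of $u$ is the wrong place to attack. The paper works instead at the right end: take the \emph{largest} $m$ with $i_{m+1}>i_m+1$. Then every letter occurring after $\alpha_{i_m}$ in $u$ has index $\geq i_{m+1}>i_m+1$, so by intra-commutation (rule~(5)) the single letter $\alpha_{i_m}$ migrates rightward past the remainder of $u$ to sit immediately before the trailing $\beta_i$; since $i_m+1<i_{m+1}\leq i_k=i$ gives $|i_m-i|\geq 2$, rule~(4) then swaps it past that $\beta_i$, making $\alpha_{i_m}$ the new last letter. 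The trailing $\beta_i$, now flanked, slides to the front by Lemma~\ref{lemma:general}(ii) and merges with $\gamma_1=\beta_i$, producing a strictly shorter word of the form $\beta_i u''$ and contradicting regularity. Without this (or an equivalent completed argument) the main assertion of the lemma is unproved.
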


\begin{proof}
   First suppose that $v'\neq 1$ or $\gamma_2\neq 1$. Then since $|v'\gamma_2| \geq 1$, by Lemma \ref{lemma:general}.(ii), we can rewrite $w$ as $\beta_i\beta_iuv' \gamma_2$, which can be rewritten as $\beta_i uv' \gamma_2$. Since $w$ can be rewritten as a shorter word, it was not in a regular form.
   
   Now consider the case when $|v\gamma_2|=1$, i.e., $v'=\gamma_2=1$. First, suppose that $i_k \neq i$. Writing $u=u'\alpha_{i_k}$, we then have that $w= \beta_i u' \alpha_{i_k}\beta_i \leftrightarrow \beta_i u' \beta_i \alpha_{i_k} \leftrightarrow \beta_i\beta_i u'  \alpha_{i_k}\leftrightarrow \beta_i u'  \alpha_{i_k}$, so $w$ was not in a regular form.
   
   Now assume that $i_k=i$ but $i_{\ell+1} > i_{\ell}+1$ for some $1 \leq \ell \leq k-1$. Let $m$ be the largest index such that $i_{m+1}> i_m +1 $. We have that $i_m+1 < i_{m+1} \leq j_{m+1}$, so we can switch the position of $\alpha_{i_{m}}$ and $\alpha_{j_{m+1}}$ within $w$. The same is true if we replace $\alpha_{j_{m+1}}$ with $\alpha_{j_{m+1}}-1$, so we can rewrite $w$ as 
   $$
   \beta_i\alpha[j_1,i_1]\cdots \alpha[j_m,i_m+1]\alpha[j_{m+1},i_{m+1}]\alpha_{i_m}\cdots \alpha[j_k,i_k]\beta_i.
   $$ 
   Then since $i_m+1 < i_{m+1} < i_{m+1}$, we can repeat this process to rewrite $w$ as 
   $$
   \beta_i\alpha[j_1,i_1]\cdots \alpha[j_m,i_m+1]\alpha[j_{m+1},i_{m+1}]\cdots \alpha[j_k,i_k]\alpha_{i_m}\beta_i.
   $$
   Then since $i_m+1 < i_{m+1} \leq i_k =i$, we can switch $\alpha_{i_m}$ and $\beta_i$. Then by Lemma \ref{lemma:general}.(ii), we can move $\beta_i$ to the left of all the $\alpha$'s to get $w \leftrightarrow \beta_i\beta_i u\leftrightarrow \beta_iu$, so $w$ was not in a regular form.
\end{proof}

\begin{lemma}\label{lemma:normalform5}
 Let $w = \gamma_1 uv \gamma_2$ be an element of $\mathcal{O}_n$ in a regular form with 
 $\gamma_1=\beta_i$, then we cannot have that $v=\beta_{i+1}\beta_{i}v'$ or $v=\beta_{i-1}\beta_iv'$ for some $v' \in \mathcal{O}_n$. 
\end{lemma}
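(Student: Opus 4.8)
The plan is a proof by contradiction. Suppose $w=\gamma_1uv\gamma_2$ is in a regular form with $\gamma_1=\beta_i$, and suppose toward a contradiction that $v=\beta_{i+1}\beta_iv'$; the case $v=\beta_{i-1}\beta_iv'$ is handled identically, using the Right Jones relation $(3)$ wherever the argument below uses the Left Jones relation $(2)$. Since $i$ is the index of the first letter of $u$ and $u\in\mc O_n^\alpha$ is in J.n.f., write $u=\alpha_iu'$ with $u'\in\mc O_n^\alpha$. The goal is to exhibit the strictly shorter word
$$w'=\beta_i\alpha_iu'\beta_iv'\gamma_2,$$
obtained from $w$ by deleting the leading $\beta_{i+1}$ of $v$, and to show that $w'$ represents the same element of $\mc O_n$ as $w$ while lying in the same form class; this contradicts the fact that the regular form of an element is the shortest representation available in the highest-priority form class the element admits.

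First I would show $w'=w$ in $\mc O_n$. Starting from $w'$, expand the $\beta_i$ that immediately follows $u'$ by the Jones relation $\beta_i\to\beta_i\beta_{i+1}\beta_i$ (rule $(2)$), producing $\beta_i\alpha_iu'\beta_i\beta_{i+1}\beta_iv'\gamma_2$. Now slide the first of these three $\beta_i$'s leftward, past each letter of $u'$ and then past the leading $\alpha_i$: it commutes with any $\alpha_k$, $k\neq i$, by rule $(4)$, and it crosses an $\alpha_i$ by contextual commutation, Lemma~\ref{lemma:general}.(ii). Lemma~\ref{lemma:general}.(ii) applies at each crossing because the travelling $\beta_i$ always has a generator to its right (initially the $\beta_{i+1}$ just created, and later the $\alpha$'s it has already crossed) and a generator to its left (a letter of $u'$, the leading $\alpha_i$, or the leading $\beta_i$), so the required two-sided context is present. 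When it reaches the front, idempotence (rule $(1)$) merges it into $\gamma_1=\beta_i$, recovering $\beta_i\alpha_iu'\beta_{i+1}\beta_iv'\gamma_2=w$. In the extreme case $u'=v'=\gamma_2=1$ this collapses to the three-step computation $\beta_i\alpha_i\beta_i\to\beta_i\alpha_i\beta_i\beta_{i+1}\beta_i\to\beta_i\beta_i\alpha_i\beta_{i+1}\beta_i\to\beta_i\alpha_i\beta_{i+1}\beta_i$.

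Next I would check that $w'$ is in the same form class as $w$. The $\alpha$-part $\alpha_iu'=u$ is already in J.n.f.; and inspecting the block decomposition of the J.n.f. of $v=\beta_{i+1}\beta_iv'$ shows that $\beta_iv'$ is again in J.n.f., since the first block of $v$ is either $\beta[i+1,i]$ (replaced in $\beta_iv'$ by the block $\beta[i,i]$) or $\beta[i+1,i_1]$ with $i_1<i$ (replaced by $\beta[i,i_1]$), and in both cases the sequences of first indices and of last indices of the blocks of $\beta_iv'$ remain strictly increasing. The indices $i$ and $j$ attached to $w'$ equal those of $w$, and $\gamma_1=\beta_i$ and $\gamma_2$ are unchanged, so $w'$ is a legitimate representation of $w$ in the same form (form $(2)$ if $\gamma_2=1$, form $(3)$ if $\gamma_2=\alpha_j$) of length $|w|-1$. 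Hence $w$ is not the shortest representation in its form class, so $w$ is not the regular form — a contradiction, and the lemma follows.

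The step I expect to be the main obstacle is discovering this argument: attempts to simplify $w$ directly tend to loop, since $\beta_i\alpha_i\beta_{i+1}\beta_i$ rewrites by rule $(4)$ to $\beta_i\beta_{i+1}\alpha_i\beta_i$ and back. The key is to run the argument backwards from the shorter word $w'$, conjure a $\beta_{i+1}$ via a Jones relation, and then use contextual commutation — which rule $(4)$ alone cannot accomplish — to walk the surplus $\beta_i$ to the front and absorb it by idempotence. The remaining difficulties are routine bookkeeping: verifying the two-sided context needed for each contextual-commutation step, and checking that $\beta_iv'$ stays in J.n.f.
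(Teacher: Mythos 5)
Your proof is correct and follows essentially the same route as the paper's: both derive the strictly shorter equivalent word $\beta_i u \beta_i v'\gamma_2$ by expanding a $\beta_i$ via a Jones relation, transporting a surplus $\beta_i$ across $u$ using rule (4) together with contextual commutation (Lemma~\ref{lemma:general}.(ii)), and collapsing by idempotence --- you merely run the rewriting in the opposite direction (from the shorter word up to $w$), which is equivalent since all rules are symmetric. Your added check that $\beta_i v'$ remains in Jones normal form and that the shorter word sits in the same form class is bookkeeping the paper leaves implicit, but it is not a different argument.
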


\begin{proof}
   
   Suppose that $v=\beta_{i+1}\beta_iv'$ for some $v' \in \mathcal{O}_n$; the case when $v=\beta_{i-1}\beta_i$ is similar. Then, by applying rules (2) and (3) on the first $\beta_i$ and $\beta_{i+1}$, respectively, and then by applying rule (1), we have 
   $$
   w= \beta_iu\beta_{i+1}\beta_iv' \leftrightarrow \beta_i\beta_{i+1}\beta_i u \beta_{i+1}\beta_i\beta_{i+1}\beta_iv' \leftrightarrow  \beta_i\beta_{i+1}\beta_i \beta_i u \beta_{i+1}\beta_i\beta_{i+1}\beta_iv'.
   $$ 
   By Lemma \ref{lemma:general}.(ii), we obtain 
   $$
   w \leftrightarrow\beta_i\beta_{i+1}\beta_i  u \beta_i\beta_{i+1}\beta_i\beta_{i+1}\beta_iv' \leftrightarrow \beta_i u \beta_i\beta_{i+1}\beta_iv' \leftrightarrow\beta_i u\beta_iv'.
   $$
\end{proof}

We summarize the lemmas with the following two corollaries.

\medskip
\begin{corollary}\label{cor:forms}
Let $w=\gamma_1uv\gamma_2$ be an element of $\mathcal{O}_n$ in a regular form with $\gamma_1 \neq 1$. Let $\gamma_1=\beta_i$ and the J.n.f.
  of $v$ be $\beta[j_1,i_1]\cdots \beta[j_k,i_k]$. Then we have the following possibilities for $v$:
\begin{enumerate}
    \item $k=1$, $j_1=i_1=i$, i.e., $v=\beta_i$
    \item $k=1$, $j_1=i-1$
    \item $k\geq 2$, $j_1=i-1$, $j_2=i+1$
    \item $k\geq 1$, $j_1=i_1=i+1$, $j_m=i_m$ and $j_{m}=j_{m-1}+1$ for $m=2, \dots, k$
\end{enumerate}

In addition, in the case of $(1)$, if the J.n.f. 
 of $u$ is $\alpha[s_1,r_1]\cdots\alpha[s_t,r_t]$, we must also have that $r_t=i$, $r_{q+1}=r_q+1$ for $q=1, \dots, t-1$, and $\gamma_2=1$.
\end{corollary}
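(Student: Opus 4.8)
The plan is to read the statement off Lemmas~\ref{lemma:normalform3}, \ref{lemma:normalform4} and \ref{lemma:normalform5}, organizing the argument by the value of $j_1$. Since $\gamma_1=\beta_i\neq 1$ is part of a regular form and the J.n.f.\ of $v$ is written as $\beta[j_1,i_1]\cdots\beta[j_k,i_k]$ (so $k\geq 1$, in particular $v\neq 1$), Lemma~\ref{lemma:normalform3} gives $|j_1-i|\leq 1$, hence $j_1\in\{i-1,i,i+1\}$; these three values will correspond to possibilities $(2)$--$(3)$, possibility $(1)$, and possibility $(4)$ respectively. First I would dispose of $j_1=i$: then $v$ begins with $\beta_{j_1}=\beta_i$, so $v=\beta_i v'$ and Lemma~\ref{lemma:normalform4} applies, forcing $v'=1$ (hence $k=1$ and $j_1=i_1=i$, which is possibility $(1)$), $\gamma_2=1$, and, writing the J.n.f.\ of $u$ as $\alpha[s_1,r_1]\cdots\alpha[s_t,r_t]$, $r_t=i$ and $r_{q+1}=r_q+1$ for $q=1,\dots,t-1$. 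This is precisely the additional clause of the corollary, so the case $j_1=i$ is complete.

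Next the case $j_1=i+1$. The first block of $v$ is $\beta_{i+1}\beta_i\cdots\beta_{i_1}$; if $i_1\leq i$ then this block has length at least $2$ and $v=\beta_{i+1}\beta_i v''$, which Lemma~\ref{lemma:normalform5} forbids, so $i_1=i+1=j_1$. The exceptional alternative in Lemma~\ref{lemma:normalform3} requires $j_1=i-1$, which fails here, so $j_2=j_1+1$, and, again by that lemma, $j_{m+1}=j_m+1$ for all $m\geq 2$; thus $j_m=i+m$ for every $m$. Since the $i_m$ strictly increase and $i_1=i+1$, we get $i+m\leq i_m\leq j_m=i+m$, so $i_m=j_m=i+m$ for all $m$, i.e.\ each block is a single letter and $v=\beta_{i+1}\beta_{i+2}\cdots\beta_{i+k}$. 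This is possibility $(4)$.

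Finally the case $j_1=i-1$. If $k=1$ we are in possibility $(2)$ with nothing left to check. If $k\geq 2$, Lemma~\ref{lemma:normalform3} gives $j_2=j_1+1=i$ or $j_2=i+1$; the latter, together with $j_{m+1}=j_m+1$ for $m\geq 2$ from the same lemma, is possibility $(3)$. The remaining task is to rule out $j_1=i-1$, $j_2=i$. Here Lemma~\ref{lemma:normalform5} does not apply directly unless the first block of $v$ is the single letter $\beta_{i-1}$. For a general first block $\beta_{i-1}\beta_{i-2}\cdots\beta_{i_1}$ I would observe that every letter of it other than the leading $\beta_{i-1}$ has index $\leq i-2$, hence commutes with the leading $\beta_i$ of the second block by rule $(5)$; pushing that $\beta_i$ to the left inside $v$ rewrites the element $w$ as $\beta_i\,u\,\beta_{i-1}\beta_i\,v'''\gamma_2$ for some word $v'''$. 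Now the computation in the proof of Lemma~\ref{lemma:normalform5} applies verbatim: rules $(1)$, $(2)$, $(3)$ together with Lemma~\ref{lemma:general}(ii) collapse this to the strictly shorter word $\beta_i\,u\,\beta_i\,v'''\gamma_2$, contradicting that $w$ is in a regular form. Hence $j_2=i$ cannot occur, which closes the case analysis.

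I expect the one genuinely delicate point to be this last sub-case, $j_1=i-1$ with $j_2=i$: Lemma~\ref{lemma:normalform3} alone does not exclude it, and Lemma~\ref{lemma:normalform5} is not literally applicable, so one must first manufacture the pattern $\beta_{i-1}\beta_i$ at the front of $v$ by an internal commutation and then verify that this commutation leaves the tail of $v$ and $\gamma_2$ untouched while producing a strictly shorter word. Everything else is bookkeeping that combines the three normal-form lemmas with the elementary observation that the $i_m$ (and the $j_m$) are strictly increasing integers bounded above by $j_m$.
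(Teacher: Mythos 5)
Your proof is correct and follows the route the paper intends: the paper offers no explicit proof of this corollary, presenting it only as a summary of Lemmas~\ref{lemma:normalform3}, \ref{lemma:normalform4} and \ref{lemma:normalform5}, and your case analysis on $j_1\in\{i-1,i,i+1\}$ is exactly the derivation those lemmas are meant to support. You also correctly identify and close the one point the lemmas do not literally cover --- ruling out $j_1=i-1$, $j_2=i$ when the first block of $v$ has length greater than $1$ --- by commuting the leading $\beta_i$ of the second block past the tail of the first block (all of whose indices are at most $i-2$, so rule (5) applies) and then running the computation from the proof of Lemma~\ref{lemma:normalform5} to obtain a strictly shorter word; this is a genuine refinement of the paper's presentation.
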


An example of a word $u$ in case (1) where $v=\beta_i$ for $i=3$ is depicted in Fig.~\ref{plot2}; a case where $v$ consists of a single block of $\beta$'s for $i=4$ is depicted (the first index of $\beta$ must be $3$) in Fig.~\ref{plot3}; and Fig.~\ref{plot4} shows an example where there are more than 2 blocks of $\beta$'s in $v$ and hence they have to start with index one less (i.e., $3$) and one more (i.e., $5$) than $i=4$, illustrating case (3). 
Case (4), where there are more than one $\beta$-blocks in $v$ with all being singletons for $i=2$, is illustrated in Fig.~\ref{plot5}.

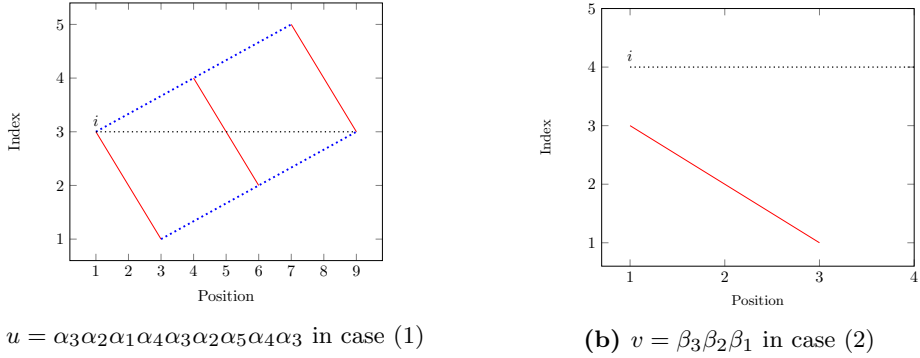
\begin{figure}[h]
    \begin{minipage}[h]{.47\textwidth}
    \centering
    \begin{tikzpicture}[scale=.6]
\begin{axis}[
xlabel=Position,
ylabel=Index,
xtick={0,...,9},
ytick={0,...,5}
]
\addplot[color=red] coordinates {
(1,3)
(2,2)
(3,1)
};
\addplot[color=red] coordinates {
(4,4)
(5,3)
(6,2)
};
\addplot[color=red] coordinates {
(7,5)
(8,4)
(9,3)
};

\addplot[blue, very thick, dotted] coordinates {
(1,3)
(4,4)
(7,5)
};
\addplot[blue, very thick, dotted] coordinates {
(3,1)
(6,2)
(9,3)
};

\addplot[black, thick, dotted] coordinates {
(1,3)
(9,3)
};

\node at (axis cs:1, 3.2){$i$};
\end{axis}
\end{tikzpicture}
\subcaption{$u=\alpha_3\alpha_2\alpha_1\alpha_4\alpha_3\alpha_2\alpha_5\alpha_4\alpha_3$ in case (1)}
\label{plot2}
    \end{minipage}\hfill
    \begin{minipage}[h]{.45\textwidth}
    \centering
\begin{tikzpicture}[scale=.6]
\begin{axis}[
xlabel={\small Position},
ylabel={\small Index},
xtick={0,...,4},
ytick={0,...,5},
xmax=4,
ymax=5
]
\addplot[color=red] coordinates {
(1,3)
(2,2)
(3,1)
};
\addplot[black, thick, dotted] coordinates {
(1,4)
(4,4)
};

\node at (axis cs:1, 4.2){$i$};
\end{axis}

\end{tikzpicture}
\subcaption{ $v=\beta_3\beta_2\beta_1$ in case (2)}
\label{plot3}
    \end{minipage}
    \caption{Examples of words corresponding to cases (1) and (2) of  Corollary \ref{cor:forms}}\label{fig:cases12}
\end{figure}

\medskip
\begin{corollary}\label{cor:forms2}

Let $w=\gamma_1uv\gamma_2$ be an element of $\mathcal{O}_n$ in a regular form with 
$\gamma_2=\alpha_j$ and the J.n.f. 
of $u$ be $\beta[j_1,i_1]\cdots \beta[j_k,i_k]$. Then we have the following possibilities for $u$:
\begin{enumerate}
    \item $k=1$, $j_1=i_1=i$, i.e., $u=\alpha_i$
    \item $k=1$, $j_1=i-1$
    \item $k\geq 2$, $i_{k}=i-1$, $i_{k-1}=i+1$
    \item $k\geq 1$, $j_k=i_k=i+1$, $j_m=i_m$ and $j_{m-1}=j_{m1}+1$ for $m=2, \dots, k$
\end{enumerate}

In addition, in the case of $(1)$, if the J.n.f. 
of $v$ is $\alpha[s_1,r_1]\cdots\alpha[s_t,r_t]$, we must also have that $r_t=i$, $r_{q+1}=r_q+1$ for $q=1, \dots, t-1$, and $\gamma_1=1$.
\end{corollary}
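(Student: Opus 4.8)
The plan is to deduce Corollary~\ref{cor:forms2} from Corollary~\ref{cor:forms} by means of a symmetry of $\mathcal{O}_n$, rather than re-proving everything from scratch. Consider the map $\sigma\colon \mathcal{O}_n\to\mathcal{O}_n$ defined by $\sigma(w)=\overline{w^{R}}$, i.e.\ the composition of word reversal with the bar involution. The bar is an \emph{automorphism} of $\mathcal{O}_n$: each defining relation is invariant under $\alpha_i\leftrightarrow\beta_i$ (rules (1)--(3), (5), (1a)--(3a) are visibly symmetric, and the bar of rule (4), $\overline{\gamma_i}\gamma_j\to\gamma_j\overline{\gamma_i}$, is again an instance of (4)). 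Reversal is an \emph{anti-automorphism} of $\mathcal{O}_n$ because, as recorded in the Remark preceding Lemma~\ref{lemma:identities}, the rule set is closed under $w_1^{R}\to w_2^{R}$. Hence $\sigma$ is an anti-automorphism; it interchanges $\mathcal{O}_n^{\alpha}$ and $\mathcal{O}_n^{\beta}$, sends $\alpha_i\mapsto\beta_i$, $\beta_i\mapsto\alpha_i$, and is an involution. Consequently it carries a word of the shape $\gamma_1uv\gamma_2$ from Corollary~\ref{cor:form} to $\overline{\gamma_2}\,\overline{v^{R}}\,\overline{u^{R}}\,\overline{\gamma_1}$, which is again of that shape with the roles of $u$ and $v$, and of the left and right boundary letters, exchanged.

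Next I would check that $\sigma$ sends the regular form of any element $w$ to the regular form of $\sigma(w)$, once the $\alpha$- and $\beta$-factors are re-expressed in Jones normal form. The priority scheme of Definition~\ref{def:RegularForm} is self-dual under $\sigma$ except that the tie-break ``$\gamma_2=1$'' is sent to ``$\gamma_1=1$''; but the only word with two equal-length presentations of type (2) is $\beta_i\alpha_i$, and $\sigma(\beta_i\alpha_i)=\overline{\alpha_i\beta_i}=\beta_i\alpha_i$ is $\sigma$-fixed, so no ambiguity survives. Thus $\sigma$ exchanges the configuration ``$\gamma_1=\beta_i$'' with the configuration ``$\gamma_2=\alpha_j$'': if $w=\gamma_1uv\alpha_j$ is a regular form with $\gamma_2=\alpha_j$ (so $j$ is the last index of $v$), then $\sigma(w)$ has left boundary $\beta_j$, with $\alpha$-part $\overline{v^{R}}\in\mathcal{O}_n^{\alpha}$, $\beta$-part $\overline{u^{R}}\in\mathcal{O}_n^{\beta}$, and right boundary $\overline{\gamma_1}$. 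Applying Corollary~\ref{cor:forms} to $\sigma(w)$ then yields the four possibilities for $\overline{u^{R}}$, together with (in the first case) the stated constraints on $\overline{v^{R}}$ and the forcing $\overline{\gamma_1}=1$, i.e.\ $\gamma_1=1$.

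It then remains to translate the conditions on the Jones normal forms of $\overline{u^{R}}$ and $\overline{v^{R}}$ back into the conditions on $u$ and $v$ listed in Corollary~\ref{cor:forms2}. This is the step I expect to be the main obstacle: it requires knowing how Jones normal form in $\mathcal{J}_n$ transforms under word reversal, and this transformation is not length-preserving on the block data --- for instance the reverse of a single block $h[j,i]$ is the increasing run $h_i h_{i+1}\cdots h_j$, whose normal form is the string of singleton blocks $h[i,i]h[i+1,i+1]\cdots h[j,j]$, and conversely. So one first needs a small lemma in $\mathcal{J}_n$ (provable directly, or extractable from the normal-form analysis of \cite{BDP}) describing the normal form of $x^{R}$ in terms of that of $x$, informally by reversing the order of the blocks and passing to the ``complementary'' endpoint description. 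Feeding this through $\sigma$ should match, case by case, the four possibilities of Corollary~\ref{cor:forms} for the $\beta$-part with the four possibilities of Corollary~\ref{cor:forms2} for $u$ (and likewise the extra first-case constraints). Everything apart from this $\mathcal{J}_n$-bookkeeping is formal, being a consequence only of $\sigma$ being a relation-preserving, regular-form-preserving anti-automorphism. An alternative route, avoiding the normal-form reversal lemma, is to re-run the proofs of Lemmas~\ref{lemma:normalform1}--\ref{lemma:normalform5} with every rewriting step reversed and with $\alpha$ and $\beta$ interchanged throughout, which is legitimate since every rule invoked there is closed under both operations; Corollary~\ref{cor:forms2} then summarizes these mirrored lemmas exactly as Corollary~\ref{cor:forms} summarizes the originals.
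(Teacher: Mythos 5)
Your proposal matches the paper's approach: the paper gives no separate proof of Corollary~\ref{cor:forms2}, presenting it (like Corollary~\ref{cor:normalform2}) simply as the mirror image of the lemmas summarized in Corollary~\ref{cor:forms}, justified only by the remark that the rule set is closed under reversal and under the bar involution --- exactly your anti-automorphism $\sigma(w)=\overline{w^{R}}$, with your ``alternative route'' of re-running the mirrored lemma proofs being what the paper implicitly does. The one substantive point you flag, namely how the Jones normal form transforms under reversal (so that conditions on the leading block indices $j_m$ become conditions on the trailing block indices $i_m$), is genuine and is not written out in the paper either; it is the only bookkeeping separating the two corollaries, it is not fully avoided by your alternative route (since the mirrored lemmas must still be stated in terms of the J.n.f.\ of $u$ rather than of $u^{R}$), and it would need to be supplied in a fully rigorous account.
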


All these observations allow us to make the following conjecture about the normal forms of elements of $\mc O_n$.
\medskip
\begin{conjecture}\label{conj:normform}
Let $u \in \mathcal{O}_n^{\alpha}, v \in \mathcal{O}_n^{\beta}$ be in Jones normal form and let $i$ represent the first letter of $u$ and $j$ represent the last letter of $v$. A {\em normal form of an element of $\mathcal{O}_n$} is one of the following:

\begin{enumerate}
    \item $uv$
    \item $\beta_i uv$, and $u$ and $v$ are subject to restriction (a)
    \item $uv\alpha_j$, and $u$ and $v$ are subject to restriction (b)
    \item $\beta_i uv \alpha_j$, and $u$ and $v$ are subject to restrictions (a) and (b),
\end{enumerate} 
with the following restrictions:

\begin{enumerate}[label=\alph*)]
    \item $u$ is as in Lemma \ref{lemma:normalform1} and $v$ is one of the possibilities listed in Corollary \ref{cor:forms}
    \item $u$ is one of the possibilities listed in Corollary \ref{cor:forms2} and $v$ is as in Corollary \ref{cor:normalform2}
\end{enumerate}
\end{conjecture}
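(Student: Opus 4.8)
The proof of Conjecture~\ref{conj:normform} naturally splits into two parts: \emph{existence} (every element of $\mathcal{O}_n$ admits a representative in one of the four listed shapes) and \emph{uniqueness} (distinct normal-form words represent distinct elements). The existence half is essentially already in hand: by Corollary~\ref{cor:form} every $w\in\mathcal{O}_n$ equals $\gamma_1 uv\gamma_2$ with $u\in\mathcal{O}_n^\alpha$, $v\in\mathcal{O}_n^\beta$ in Jones normal form and $\gamma_1\in\{1,\beta_i\}$, $\gamma_2\in\{1,\alpha_j\}$; the regular form of Definition~\ref{def:RegularForm} then selects such a representative unambiguously; and Lemmas~\ref{lemma:normalform1}--\ref{lemma:normalform5}, together with Corollaries~\ref{cor:normalform2}, \ref{cor:forms} and \ref{cor:forms2}, show that whenever $\gamma_1\neq 1$ (resp. $\gamma_2\neq 1$) the blocks of $u$ and $v$ are forced into exactly the configurations recorded in restrictions (a) and (b). So the first step of the plan is purely organizational: collect these statements and verify that the four cases, with their restrictions, are exhaustive and mutually exclusive as descriptions of the regular form, paying attention to the tie-breaking clauses in Definition~\ref{def:RegularForm} (the only genuinely ambiguous word being $\beta_i\alpha_i$).

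The substance of the conjecture is uniqueness, and this is where the main obstacle lies. I see two routes. The first is a \emph{rewriting-system} argument in the spirit of Knuth--Bendix: orient every relation of $\mathcal{O}_n$ (including the derived contextual commutations of Lemma~\ref{lemma:general} and the presentation of Corollary~\ref{cor:presentation}) into a terminating directed system using a reduction order that first compares length and then applies a positional tie-break pushing $\alpha$'s to the left, $\beta$'s to the right, and sorting within $\mathcal{O}_n^\alpha$ and $\mathcal{O}_n^\beta$ toward Jones normal form; one then checks that all critical pairs resolve, concludes that the irreducible words are precisely the words described in Conjecture~\ref{conj:normform}, and reads off the bijection with $\mathcal{O}_n$. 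The difficulty is that $\mathcal{O}_n$ has infinitely many generators and several \emph{length-preserving} rules (the contextual commutations), so the critical-pair analysis cannot be a finite case check but must be carried out by schema, uniformly in $i$ and $n$; moreover, completing the system may require adjoining further derived rules beyond (1)--(5), (1a)--(3b), and isolating a finite family of rule-schemas that is genuinely confluent is the real work. I would use the GAP data for $n=3,4$ as a guide to find the right order and the complete rule set, first proving confluence for each fixed $n$ (a decidable, finite check that matches the computed orders) and then extracting the uniform argument.

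The second route is to exhibit a \emph{faithful representation} of $\mathcal{O}_n$ and evaluate each normal form on it. The homomorphism $\phi\colon\mathcal{O}_n\to\mathcal{J}_n\times\mathcal{J}_n$ defined by $\alpha_i\mapsto(h_i,1)$ and $\beta_i\mapsto(1,h_i)$ (one checks it respects all of (1)--(5) and (1a)--(3b)) recovers the ``Jones data'' $(u,v)$ of a normal form but, as the pair $\alpha_i\beta_i,\ \beta_i\alpha_i$ already shows, it collapses the contextual information; a faithful model must refine $\phi$ by a combinatorial decoration recording the relative vertical placement of the $\alpha$- and $\beta$-caps and cups (in the DNA picture, which side of a scaffold strand a staple cap lies on). Designing this decorated-diagram monoid correctly, and proving that concatenation of decorated diagrams is faithful, is the crux of this route; once it is in place, distinctness of the normal forms should follow by a direct computation, since $u$, $v$, $\gamma_1$, $\gamma_2$ are each recoverable from the diagram. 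A third, more structural handle comes from Section~\ref{sec:Greens}: since $\mathcal{O}_n$ is $\mathcal{H}$-trivial, $|\mathcal{O}_n|$ equals the total number of cells in its egg-box diagrams, and the $\mathcal{D}$-classes of $\mathcal{O}_n$ are in bijection with those of $\mathcal{J}_n\times\mathcal{J}_n$, so determining the egg-box dimensions on the $\mathcal{O}_n$ side in terms of the $\mathcal{J}_n\times\mathcal{J}_n$ side would yield a closed count of $|\mathcal{O}_n|$ to match against the number of conjectured normal forms, turning uniqueness into a counting identity. In every route the hard point is the same -- controlling the interaction between the Jones relations and the contextual commutations \emph{uniformly in $n$} -- and everything else is bookkeeping already prepared by the lemmas above; I would attempt the rewriting route first, falling back to the per-$n$ confluence check (which already certifies the conjecture for the cases GAP has reached) if a uniform completion proves elusive.
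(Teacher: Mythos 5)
The statement you are asked to prove is, in the paper, explicitly a \emph{conjecture}: the authors state outright that ``we don't know whether the form in Conjecture~\ref{conj:normform} is indeed unique,'' and they support it only by the preceding lemmas (which give necessary conditions on the regular form) together with hand and GAP counts for $n=3,4$ showing that the number of conjectured normal forms matches $|\mathcal{O}_n|$. Your proposal correctly diagnoses the situation: the existence half is assembled from Corollary~\ref{cor:form}, Definition~\ref{def:RegularForm}, Lemmas~\ref{lemma:normalform1}--\ref{lemma:normalform5} and Corollaries~\ref{cor:normalform2}, \ref{cor:forms}, \ref{cor:forms2}, exactly as the paper does, and the entire difficulty is uniqueness. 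But your text does not prove uniqueness either; it surveys three strategies (a Knuth--Bendix completion uniform in $n$, a faithful decorated-diagram representation refining $\phi\colon\mathcal{O}_n\to\mathcal{J}_n\times\mathcal{J}_n$, and a counting identity via the egg-box structure) and in each case you explicitly flag the central step as unresolved. A plan that names its own missing lemma is not a proof, so as a proof attempt this has a genuine gap --- albeit the same gap the paper itself leaves open.

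Two smaller points. First, even your existence half needs one more check that the paper also elides: the restrictions (a) and (b) are proved as \emph{necessary} conditions on a regular form with $\gamma_1\neq 1$ or $\gamma_2\neq 1$, but the conjecture implicitly asserts that every word satisfying those restrictions actually occurs as the regular form of some element and that no two of them coincide in $\mathcal{O}_n$; the second assertion is precisely the uniqueness problem again, so the two halves are not as separable as your write-up suggests. Second, your third route is genuinely different from anything in the paper and is arguably the most promising: the paper does prove $\mathcal{H}$-triviality and the bijection of $\mathcal{D}$-classes with those of $\mathcal{J}_n\times\mathcal{J}_n$, so if you could compute the number of $\mathcal{R}$- and $\mathcal{L}$-classes in each $\mathcal{D}$-class of $\mathcal{O}_n$ in terms of the corresponding data for $\mathcal{J}_n$, you would obtain a closed formula for $|\mathcal{O}_n|$ to compare against the count of conjectured forms, turning uniqueness into an enumerative identity. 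That would be a real contribution beyond the paper, but as written it remains a proposal, not an argument.
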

\medskip
By a normal form, we mean a form in which every element of $\mathcal{O}_n$ can be written uniquely. Corollary \ref{cor:form} provides a form in which every element can be written, but we cannot exlude the possibility that some element in $\mc O_n$ may have several such representations, so, we do not have unique representations with these forms. 
Note that while Definition \ref{def:RegularForm} does provide a unique regular form for each element in $\mathcal{O}_n$, it relies on choosing one regular form when multiple candidates are available, and is not descriptive, which is why we hesitate to call it a normal form. Conjecture \ref{conj:normform} describes a concrete description for a possible normal form of $\mathcal{O}_n$. However, we don't know whether 
the form in Conjecture \ref{conj:normform} is indeed unique.

We examine the regular forms for $n=2,3,4$.
It was proved in \cite{GarrettJKS19} that 
elements of ${\cal O}_2$ are
represented by 
	the following words:
	$\alpha_{1},\
	\beta_{1},\
	\alpha_{1}\beta_{1},\
	\beta_{1}\alpha_{1},\
	\alpha_{1}\beta_{1}\alpha_{1},\ {\rm  and}\
	\beta_{1}\alpha_{1}\beta_{1} $ which are in regular forms.
	For $n=3$ and $n=4$, the number of possible normal forms of words in $\mathcal{O}_n$ as described in Conjecture \ref{conj:normform} was manually calculated, compared to the size of $\mathcal{O}_n$, and was found to coincide. 
    For each of the cases $(1)-(4)$ in Conjecture \ref{conj:normform}, the number of $u$'s and $v$'s that meet the restrictions were counted and then multiplied to obtain the total number of combinations for that case. For example, in the case of $n=4$ in case $(2)$, we find that $v$ must be one of $\beta_1,\beta_2,\beta_3,\beta_1\beta_3,\beta_2\beta_1,\beta_2\beta_3,\beta_1\beta_3\beta_2$. If we look at a particular instance, such as $v=\beta_2\beta_1$, we then find that we must have $i=3$ and that $u$ is one of $\alpha_3,\alpha_3\alpha_2,\alpha_3\alpha_2\alpha_1$.

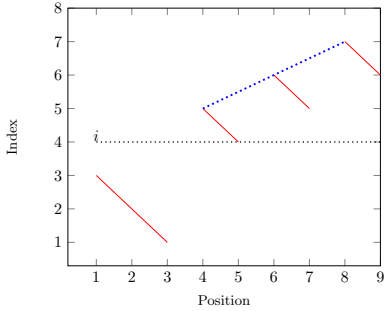
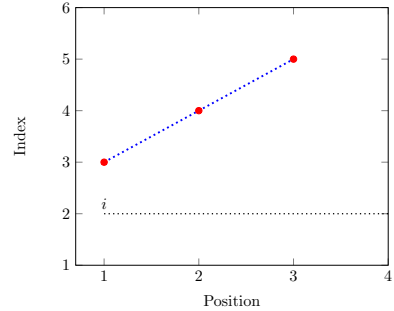
\begin{figure}[h]
    \begin{minipage}[h]{.45\textwidth}
    \centering
\begin{tikzpicture}[scale=.6]
\begin{axis}[
xlabel={\small Position},
ylabel={\small Index},
xtick={0,...,10},
ytick={0,...,9},
xmax=9,
ymax=8
]
\addplot[color=red] coordinates {
(1,3)
(2,2)
(3,1)
};
\addplot[color=red] coordinates {
(4,5)
(5,4)
};
\addplot[color=red] coordinates {
(6,6)
(7,5)
};

\addplot[color=red] coordinates {
(8,7)
(9,6)
};

\addplot[black, thick, dotted] coordinates {
(1,4)
(10,4)
};

\addplot[blue, very thick, dotted] coordinates {
(4,5)
(6,6)
(8,7)
};

\node at (axis cs:1, 4.2){$i$};
\end{axis}

\end{tikzpicture}
\subcaption{ $v=\beta_3\beta_2\beta_1\beta_5\beta_4\beta_6\beta_5\beta_7\beta_6$ in case (3) }
\label{plot4}
    \end{minipage}\hfill
   \begin{minipage}[h]{.45\textwidth}
   \centering
\begin{tikzpicture}[scale=.6]
\begin{axis}[
xlabel=Position,
ylabel=Index,
xtick={0,...,4},
ytick={0,...,6},
ymin=1,
xmax=4,
ymax=6
]
\addplot[mark=otimes*, color=red] coordinates {
(1,3)
};
\addplot[mark=otimes*, color=red] coordinates {
(2,4)
};
\addplot[mark=otimes*, color=red] coordinates {
(3,5)
};
\addplot[black, thick, dotted] coordinates {
(1,2)
(4,2)
};
\addplot[blue, very thick, dotted] coordinates {
(1,3)
(3,5)
};

\node at (axis cs:1, 2.2){$i$};
\end{axis}

\end{tikzpicture}
\subcaption{ $v=\beta_3\beta_4\beta_5$ in case (4) }
\label{plot5}
   \end{minipage}
 \caption{Examples of words corresponding to cases (3) and (4) of  Corollary \ref{cor:forms}}\label{fig:cases34}   
\end{figure}

It is known that the  elements of the Jones monoid ${\mathcal J}_n$
are in bijection with the linear chord diagrams obtained from the arcs of the diagrams representing them, and the total number  of such chord diagrams is equal to  the Catalan number
$\displaystyle C_n=\frac{1}{n+1}
\left( \begin{array}{cc} 2n \\ n \end{array} \right)$~\cite{BDP}.
Thus the numbers of  elements of ${\cal J}_n$ for $n=2, \ldots, 7$ are 
2, 5, 14, 42, 132, and 429 respectively.
GAP computations show that 
the number of non-identity elements in  ${\mathcal O}_3$, ${\mathcal O}_4$, ${\mathcal O}_5$, $\mathcal O_6$, and $\mathcal{O}_7$) are
44, 293, 2179, 19086, and 190512, respectively \cite{GAP4}. This sequence of integers is the new sequence A380196  in the OEIS~\cite{OEIS} list of sequences. 

\medskip
\begin{definition}
We call $p_{\alpha}:\mathcal{O}_n\to\mathcal{O}_n^{\alpha}$, defined by $p_{\alpha}(\alpha_i)=\alpha_i$ and $p_{\alpha}(\beta_i)=1$ for all $i\leq n-1$, the {\em $\alpha$-projection}. Define $p_{\beta}:\mathcal{O}_n\to\mathcal{O}_n^{\beta}$ as the {\em $\beta$-projection} in the same way for $\beta$'s. For $w\in\mathcal{O}_n$, we call the product $p_\alpha(w)p_\beta(w)$ the {\em core} of $w$. We will denote $p(w)=p_{\alpha}(w)p_{\beta}(w)$.
\end{definition}

\medskip
\begin{lemma}\label{lemma:finite}
 $\mathcal{O}_n$ is finite for all $n$.
\end{lemma}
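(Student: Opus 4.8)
The plan is to derive finiteness of $\mathcal O_n$ directly from the structural description of its elements in Corollary~\ref{cor:form}, once we know that the one-type submonoids $\mathcal O_n^\alpha$ and $\mathcal O_n^\beta$ are finite.

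First I would observe that $\mathcal O_n^\alpha$ and $\mathcal O_n^\beta$ are finite because each is a homomorphic image of the Jones monoid $\mathcal J_n$. Indeed, the assignment $h_i\mapsto\alpha_i$ extends to a monoid homomorphism $\mathcal J_n\to\mathcal O_n$: the defining relations of $\mathcal J_n$, namely $h_ih_jh_i=h_i$ for $|i-j|=1$, $h_i^2=h_i$, and $h_ih_j=h_jh_i$ for $|i-j|\ge 2$, map respectively to instances of rules $(2)$/$(3)$, $(1)$, and $(5)$ of $\mathcal O_n$ with $\gamma=\alpha$, all of which hold in $\mathcal O_n$. The image of this homomorphism is exactly the submonoid generated by the $\alpha_i$, i.e. $\mathcal O_n^\alpha$, so $|\mathcal O_n^\alpha|\le|\mathcal J_n|=C_n$; the same argument with $\beta$ in place of $\alpha$ gives $|\mathcal O_n^\beta|\le C_n$.

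Next I would apply Corollary~\ref{cor:form}: every $w\in\mathcal O_n$ is of the form $w=\gamma_1 uv\gamma_2$ with $u\in\mathcal O_n^\alpha$, $v\in\mathcal O_n^\beta$, $\gamma_1\in\{1,\beta_1,\dots,\beta_{n-1}\}$, and $\gamma_2\in\{1,\alpha_1,\dots,\alpha_{n-1}\}$. Hence the evaluation map sending a quadruple $(\gamma_1,u,v,\gamma_2)$ in the finite set $\{1,\beta_1,\dots,\beta_{n-1}\}\times\mathcal O_n^\alpha\times\mathcal O_n^\beta\times\{1,\alpha_1,\dots,\alpha_{n-1}\}$ to the product $\gamma_1 uv\gamma_2\in\mathcal O_n$ is surjective, which yields $|\mathcal O_n|\le n^2 C_n^2<\infty$.

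There is essentially no remaining obstacle: Corollary~\ref{cor:form} (which itself rests on the contextual-commutation Lemma~\ref{lemma:general}) does all the work, and the only point requiring a moment of care is checking that the map $\mathcal J_n\to\mathcal O_n$ is well defined, which is immediate once one notes that rules $(1)$, $(2)$, $(3)$, $(5)$ contain the Jones relations for a single generator type. If a sharper bound were wanted, one could instead count the admissible quadruples using the finer restrictions on $u,v,\gamma_1,\gamma_2$ recorded in Corollaries~\ref{cor:forms} and~\ref{cor:forms2}, but for mere finiteness the crude estimate above suffices.
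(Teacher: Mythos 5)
Your proof is correct and takes essentially the same route as the paper: both arguments rest on the decomposition $w=\gamma_1uv\gamma_2$ forced by contextual commutation (Lemma~\ref{lemma:general}) together with the fact that $\mathcal{O}_n^\alpha$ and $\mathcal{O}_n^\beta$ have at most $|\mathcal{J}_n|$ elements. The only differences are cosmetic: you invoke Corollary~\ref{cor:form} directly (obtaining the cruder but sufficient bound $n^2|\mathcal{J}_n|^2$ rather than the paper's $4|\mathcal{J}_n|^2$, which comes from analyzing the fibers of the core map $p$), and you spell out the homomorphism $\mathcal{J}_n\to\mathcal{O}_n$ sending $h_i\mapsto\alpha_i$, a step the paper leaves implicit.
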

\begin{proof}

  Let $u\in \mathcal{O}_n^{\alpha}$ and $v \in \mathcal{O}_n^{\beta}$ be non-empty words and let $i$ be the index of the first letter in $u$ and $j$ the index of the last letter in $v$. Now let $w \in \mathcal{O}_n$ with $p(w)=p(uv)$. We show that $w \in \{ uv, \beta_iuv, uv\alpha_j, \beta_iuv\alpha_j \}$, which gives $|\mathcal{O}_n| \leq 4 |\mathcal{J}_n|^2 < \infty$.
  
  Note by Lemma \ref{lemma:general} that for any word in $\mathcal{O}_n$, we may switch the order of $\alpha$'s and $\beta$'s freely within the word, except for the first and last letter of the word. Thus we may rewrite $w$ as $\beta_{k}u'v'\alpha_{\ell}$, where $u' \in \mathcal{O}_n^{\alpha}$, $v'\in \mathcal{O}_n^{\beta}$, and $u'\alpha_{\ell}=u$,$\beta_{k}v'=v$.
  Also note that we may switch $\alpha$'s and $\beta$'s at the beginning or end of a word as well, unless they have the same indices. Thus if $k\neq i$ and $\ell \neq j$, we may rewrite $w$ as $uv$. Assume that $k=i$ and $\ell=j$; the case where only one equality holds follows similarly. Then we may rewrite $w$ as $\beta_iu'v' \alpha_j\to\beta_i\beta_iu'v' \alpha_j\alpha_j\to\beta_iu'\alpha_j\beta_iv' \alpha_j\to\beta_iuv\alpha_j$.
\end{proof}

%%%%%%%%%%%%%%%%%%%%%%%%%%%%%%%%%%%
\section{Green's classes}\label{sec:Greens}
%%%%%%%%%%%%%%%%%%%%%%%%%%%%%%%%%%%%

In this section, we explore the ideal structure within $\mc O_n$, in particular the relationship of the Green's classes between $\mathcal{O}_n$ and 
$\mathcal{J}_n$.

We recall some facts about the Green's classes of $\mathcal{J}_n$. 
From \cite{LF}, 
the Jones monoid is a regular $^*$-semigroup, which is
a semigroup with an additional unary operation $^*$ such that for all $v,w\in\mathcal{J}_n$, $w^{**}=w$, $(vw)^{*}=w^{*}v^{*}$, and $ww^{*}w=w$ \cite{*-semigroups}. In the case of the Jones monoid, since all relations 
are symmetric with respect to the reverse operation,  
$^*$ may be considered equivalent to the reverse operation $^R$. We say that the Jones monoid is a regular $^R$semigroup.
    
By \cite{LF}, $\mathcal{J}_n$ is $\mathcal{H}$-trivial, meaning any word in $\mathcal{J}_n$ is $\mathcal{H}$-related only to itself and that the Jones monoid has no nontrivial subgroups. It should also be stated that an equivalent condition for a monoid to be $\mathcal{H}$-trivial is that it is aperiodic, meaning that for all elements $a$ in the monoid, there exists a positive integer $n$ such that $a^n=a^{n-1}$ (Proposition 4.2 of \cite{Pin}).

For $v\in\mathcal{J}_n$, let {\sf cap}$(v)$ 
be the number of caps (cups) on the top (resp. bottom) of the diagram of $v$ (there are the same number). More formally, {\sf cap}$(v)$
is the number of edges on the subgraph of an element of $\mathcal{J}_n$ containing only the vertices on the top of the graph. Then $v\mathcal{D}w$ if and only if {\sf cap}$(v)=\,\, ${\sf cap}$(w)$
for $v, w \in\mathcal{J}_n$ \cite{LF}. This implies that the $\mathcal{D}$-classes of $\mathcal{J}_n$ are in bijection with the set consisting of $1$ and all elements of the form $h_1h_3\cdots h_{2k-1}$ for some $0<k\leq\lfloor\frac{n}{2}\rfloor$.

 Based on these facts about the Jones monoid, we now have several observations about the structure of $\mathcal{O}_n$ and its Green's classes.

\medskip
\begin{proposition}\label{*-semigroup}
$\mathcal{O}_n$ is a regular $^R$semigroup.
\end{proposition}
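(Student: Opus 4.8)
The plan is to exhibit an explicit unary operation $^R$ on $\mathcal{O}_n$ (word reversal) and verify the three axioms of a regular $^*$-semigroup: $w^{RR}=w$, $(vw)^R = w^R v^R$, and $w w^R w = w$. First I would observe that the defining rewriting rules (1)--(5), (1a), (2a), (3a) of $\mathcal{O}_n$ (Corollary \ref{cor:presentation}) are each closed under the reverse operation $^R$ on words: idempotence, the two Jones relations (which swap into each other under reversal), inter-commutation and intra-commutation are all visibly symmetric, and likewise for (1a)--(3a). Hence reversal descends to a well-defined map $^R\colon\mathcal{O}_n\to\mathcal{O}_n$, and on words it obviously satisfies $w^{RR}=w$ and $(vw)^R=w^Rv^R$, so these two axioms are immediate.

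The substance is the regularity axiom $w w^R w = w$ for every $w\in\mathcal{O}_n$. I would prove this by reducing to the generating cases via the anti-homomorphism property: it suffices to check it on a spanning set and then propagate. Concretely, I would first verify $\gamma_i \overline{\gamma_i}^{\,?}$ --- rather, I would verify the identity on generators, where $\gamma_i^R=\gamma_i$ gives $\gamma_i\gamma_i\gamma_i\to\gamma_i$ by idempotence, and more generally proceed by induction on word length: writing $w = \gamma\, w'$ with $\gamma$ a generator, one has $w w^R w = \gamma w' w'^R \gamma \gamma w' = \gamma (w' w'^R) \gamma w'$, and I would use the inductive hypothesis $w' w'^R w' = w'$ together with the Jones/idempotence relations to collapse the middle. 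Alternatively, and probably more cleanly, I would use the fact (already available in the excerpt: $\mathcal{J}_n$ is a regular $^R$-semigroup, and the core map $p\colon\mathcal{O}_n\to\mathcal{O}_n^\alpha\times\mathcal{O}_n^\beta\cong\mathcal{J}_n\times\mathcal{J}_n$ behaves well) to transfer regularity: since $p(w w^R w) = p(w)p(w)^R p(w) = p(w)$ in the product of two copies of the $\mathcal{J}_n$, the element $w w^R w$ has the same core as $w$; then by Lemma \ref{lemma:finite}, $w w^R w$ lies in the finite fibre $\{uv,\ \beta_i uv,\ uv\alpha_j,\ \beta_i uv\alpha_j\}$ over that core, and a short case analysis --- using that $w w^R w$ begins with the same letter $w$ begins with and ends with the same letter $w$ ends with, which pins down the $\gamma_1,\gamma_2$ decoration --- forces $w w^R w = w$.

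I expect the main obstacle to be exactly this last case analysis: showing that within a given core-fibre, the reverse-sandwich $w w^R w$ cannot pick up or shed one of the optional boundary letters $\beta_i$ or $\alpha_j$. The key leverage is that reversal fixes the first and last letters' indices (it swaps $\alpha\leftrightarrow\beta$ and reverses order, so $w^R$ starts with $\overline{(\text{last letter of }w)}$), so in $w w^R w$ the leftmost letter agrees with that of $w$ and the rightmost with that of $w$, and the interior can be normalized by contextual commutation (Lemma \ref{lemma:general}) without disturbing these endpoints; combined with the computation $\gamma_i\overline{\gamma_i}\gamma_i\cdot\gamma_i\overline{\gamma_i}\gamma_i\cdot\gamma_i\overline{\gamma_i}\gamma_i\to\gamma_i\overline{\gamma_i}\gamma_i$ type reductions from the (2b)/(3b)-style relations and idempotence, this should close each of the four shapes in Corollary \ref{cor:form}. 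Once $w w^R w = w$ is established for all $w$, together with the two formal axioms already noted, $\mathcal{O}_n$ is a regular $^R$-semigroup.
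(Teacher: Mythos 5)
Your setup is right --- reversal is well defined because every defining rule is closed under $^R$, and the two anti-involution axioms are immediate --- but neither of your two routes to the substantive axiom $ww^Rw=w$ actually closes. Your inductive route fails at the stated step: after writing $w=\gamma w'$ you reach $\gamma\,(w'w'^R)\,\gamma\,w'$, and the inductive hypothesis $w'w'^Rw'=w'$ cannot be applied because a $\gamma$ sits between $w'w'^R$ and the final $w'$; no rule lets you delete or move that $\gamma$ in general. Your core-fibre route correctly gets $p(ww^Rw)=p(w)$ and, via Lemma~\ref{lemma:finite}, places $ww^Rw$ among the at most four elements $uv,\ \beta_iuv,\ uv\alpha_j,\ \beta_iuv\alpha_j$; but the final step --- that the first and last letters of the word $ww^Rw$ pin down which one it is --- is unjustified. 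Being one of those four shapes is a property of the monoid element, not of a word representative, and nothing in the paper shows that the four fibre elements are distinguished by the first/last letters of arbitrary representatives (that is essentially the uniqueness question left open in Conjecture~\ref{conj:normform}); a word beginning with $\alpha_i$ could a priori represent an element whose regular form begins with $\beta_i$. Separately, your parenthetical description of $^R$ as ``swapping $\alpha\leftrightarrow\beta$ and reversing'' is wrong and not harmless: that barred reversal already fails regularity on generators, since it would give $\alpha_1(\alpha_1)^{*}\alpha_1=\alpha_1\beta_1\alpha_1\neq\alpha_1$ in $\mathcal{O}_2$. The operation in the statement is plain word reversal.

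The paper proves the axiom by a direct rewriting computation on the three regular-form shapes of Definition~\ref{def:RegularForm}: contextual commutation (Lemma~\ref{lemma:general}) rearranges $ww^Rw$ so that $uu^Ru$ and $vv^Rv$ appear, regularity of $\mathcal{J}_n$ collapses each of these, and the leftover boundary letters are absorbed using the index constraints $|i-I|\leq 1$ and $|j-J|\leq 1$ from Lemma~\ref{lemma:normalform3} together with the Jones relations and idempotence. That boundary-absorption step is the real content of the argument and is exactly what your sketch leaves open.
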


\begin{proof}
As per Definition~\ref{def:RegularForm}, we have three 
cases to consider
for the regular form of an element in $\mathcal{O}_n$.
\begin{enumerate}
\item $w=uv$
\item $w=\beta_iuv\alpha_j$
\item $w=\beta_iuv$ or
$w=uv\alpha_j$
\end{enumerate}
where $u \in \mathcal{O}_n^\alpha$, $v \in \mathcal{O}_n^\beta$, $i$ is the index of the first letter of $u$, and $j$ is the index of the last letter of $v$. In the case that $u=1$ or $v=1$, the regular form of a word either reduces directly to a word in $\mathcal{O}_n^{\alpha}$ or $\mathcal{O}_n^{\beta}$, or it reduces to a word $w=v'u'$ for some $v'\in\mathcal{O}_n^{\beta}$ and $u'\in\mathcal{O}_n^{\alpha}$, which is not meaningfully different from case (1). Thus, we assume that $|u|,|v|\geq 1$ in this proof unless otherwise stated.
\begin{enumerate}
\item Let $w \in \mathcal{O}_n$ have the regular form $uv$. We have that $w^R=v^Ru^R$. Then $ww^Rw=uvv^Ru^Ruv$, which reduces to $uu^Ruvv^Rv$ by contextual commutation of $\alpha$'s and $\beta$'s. This then simplifies to $uv$ because the Jones monoid is a
regular $^R$semigroup and $\mathcal{O}^\alpha_n\simeq\mathcal{O}^\beta_n\simeq\mathcal{J}_n$. 
\item Let $w$ have the regular form $\beta_iuv\alpha_j$. We have that 
$$
ww^Rw=\beta_iuv\alpha_j\alpha_jv^Ru^R\beta_i\beta_iuv\alpha_j . 
$$
This rewrites to $\beta_iuv\alpha_jv^Ru^R\beta_iuv\alpha_j$ from idempotence. By contextual commutation (Lemma~\ref{lemma:general}), it then rewrites to $\beta_iu\alpha_ju^Ruvv^R\beta_iv\alpha_j$. 
From Lemma \ref{lemma:normalform3}, we have that the index of the first letter of $v$ (call it $I$) and the last letter of $u$ (call it $J$) are such that $|i-I|\leq 1$ and $|j-J|\leq 1$. Notice that the first letter of $v$ is also the last letter of $v^R$ and vice versa for $u$. Therefore, by the Jones relations and idempotence, $\alpha_J\alpha_j\alpha_J\rightarrow\alpha_J\rightarrow\alpha_J\alpha_J$ and $\beta_I\beta_i\beta_I\rightarrow\beta_I\rightarrow\beta_I\beta_I$, so that
$\beta_iu\alpha_ju^Ruvv^R\beta_iv\alpha_j\rightarrow\beta_iuu^Ruvv^Rv\alpha_j$, which then rewrites to $\beta_iuv\alpha_j$ because $\mathcal{J}_n$ is a regular $^R$semigroup. If $v=1$, then $\beta_iu\alpha_j=\beta_iu'$, where $u'\in\mathcal{O}_n^{\alpha}$.
If $u=v=1$, the proof is an application of idempotence rules (1) and (1a).
\item This case is almost identical to that of part (2), except in the case where one of $u$ or $v$ is $1$ (let it be $v=1$ w.l.o.g.). Then we have either $w=u\alpha_j=u'\in\mathcal{O}_n^{\alpha}\simeq\mathcal{J}_n$, or $w=\beta_iu$. 
\end{enumerate}

We then have that $w^R$ 
satisfies $ww^R w= w$  
for $w$ in all cases. Trivially, also notice that $^R$ satisfies 
$w^{RR}=w$ and $(w_1w_2)^R=w_2^R w_1^R$, and hence  
$\mathcal{O}_n$ is a regular $^R$semigroup.
\end{proof}

\begin{proposition}
$\mathcal{O}_n$ is $\mathcal{H}$-trivial.
\end{proposition}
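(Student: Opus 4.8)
The proof will establish that $\mathcal{O}_n$ is \emph{aperiodic}: since $\mathcal{O}_n$ is finite (Lemma~\ref{lemma:finite}), this is equivalent to $\mathcal{H}$-triviality by Proposition~4.2 of \cite{Pin}, so it suffices to show that for every $w\in\mathcal{O}_n$ there is a $k$ with $w^{k+1}=w^{k}$.

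First I would record that $p_{\alpha}$ and $p_{\beta}$ are monoid homomorphisms onto $\mathcal{O}_n^{\alpha}\simeq\mathcal{J}_n$ and $\mathcal{O}_n^{\beta}\simeq\mathcal{J}_n$: setting every $\beta_i$ (resp.\ every $\alpha_i$) equal to $1$ carries each of the rules $(1)$--$(5)$, $(1a)$--$(3a)$ of Definition~\ref{origami-monoid} to a consequence of the Jones relations, so these maps are well defined. Fix $w$. Since $\mathcal{J}_n$ is aperiodic, the sequences $p_{\alpha}(w)^{k}=p_{\alpha}(w^{k})$ and $p_{\beta}(w)^{k}=p_{\beta}(w^{k})$ are eventually constant, say equal to $u$ and $v$ for all $k\ge N$; hence $w^{N},w^{N+1},\dots$ all lie in the fiber $F=\{x\in\mathcal{O}_n:\ p_{\alpha}(x)=u,\ p_{\beta}(x)=v\}$.

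If $u=1$, then $p_{\alpha}(w)=1$ (a nonempty word in $\mathcal{J}_n$ has at least one cap, and the caps of a product include those of its left factor, so no power of a nonidentity element equals the identity), and then Corollary~\ref{cor:form} forces the $\alpha$-block and the trailing $\alpha_j$ of $w$ to vanish, so $w\in\mathcal{O}_n^{\beta}\simeq\mathcal{J}_n$, which is aperiodic; symmetrically if $v=1$. So assume $u$ and $v$ are nonempty, and let $i$ (resp.\ $j$) be the index of the first (resp.\ last) letter of $u$ (resp.\ $v$). By Lemma~\ref{lemma:finite}, $F\subseteq\{\beta_i^{\,\epsilon}\,uv\,\alpha_j^{\,\delta}:\ \epsilon,\delta\in\{0,1\}\}$. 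For $x$ in this set, $\beta_i^{2}=\beta_i$ gives $\beta_i x=\beta_i uv\alpha_j^{\,\delta}$, so $\beta_i x=x$ holds exactly when $x\in\{\beta_i uv,\ \beta_i uv\alpha_j\}$, and when $\beta_i x\ne x$ necessarily $x\in\{uv,\ uv\alpha_j\}$; symmetrically for $x\alpha_j$. Consequently the pair $\big(\,[\beta_i x=x],\,[x\alpha_j=x]\,\big)\in\{0,1\}^{2}$ determines $x$ inside $F$, and this remains valid when some of the four displayed expressions happen to coincide.

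Finally, for $k\ge N$ put $L_k=[\beta_i w^{k}=w^{k}]$ and $R_k=[w^{k}\alpha_j=w^{k}]$. From $w^{k+1}=w^{k}\!\cdot\! w$ we get $\beta_i w^{k+1}=(\beta_i w^{k})w$, so $L_k$ implies $L_{k+1}$; from $w^{k+1}=w\!\cdot\! w^{k}$ we get $w^{k+1}\alpha_j=w(w^{k}\alpha_j)$, so $R_k$ implies $R_{k+1}$. Being nondecreasing Boolean sequences, $L_k$ and $R_k$ are eventually constant, whence by the previous paragraph $w^{k}$ itself is eventually constant; in particular $w^{k+1}=w^{k}$ for large $k$. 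Taking the maximum of such $k$ over the finitely many $w\in\mathcal{O}_n$ shows $\mathcal{O}_n$ is aperiodic, hence $\mathcal{H}$-trivial. The two places that need care are exactly the two verifications isolated above: that $p_{\alpha},p_{\beta}$ respect every defining rule of $\mathcal{O}_n$, and the case analysis showing that the pair of Boolean invariants separates the (up to) four elements of a fiber even when some of them coincide; the monotonicity of $L_k,R_k$ and the reduction to the Jones monoid are routine.
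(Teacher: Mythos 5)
Your proof is correct, but it takes a genuinely different route from the paper's. The paper argues directly on a regular form $w=\beta_iuv\alpha_j$: it picks $m$ beyond the aperiodicity indices of $u\alpha_j$ and $v\beta_i$ in $\mathcal{J}_n$ and then explicitly rewrites $w^m$ using contextual commutation (Lemma~\ref{lemma:general}) to collect all $\alpha$-blocks and all $\beta$-blocks, obtaining $w^m=\beta_iu(\alpha_ju)^{m-1}(\beta_iv)^{m-1}v\alpha_j=w^{m-1}$. You instead factor the problem through the projections: you verify that $p_\alpha,p_\beta$ are well-defined homomorphisms onto copies of $\mathcal{J}_n$ (a fact the paper only establishes later, inside the proof of Theorem~\ref{thm:main}), use aperiodicity of $\mathcal{J}_n$ to trap the tail of the sequence $(w^k)$ in a single fiber, invoke Lemma~\ref{lemma:finite} to see that this fiber has at most the four elements $\beta_i^{\,\epsilon}uv\alpha_j^{\,\delta}$, and then separate those elements by the two monotone Boolean invariants $[\beta_ix=x]$ and $[x\alpha_j=x]$. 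Your case analysis showing that this pair of invariants distinguishes the fiber elements even when some of the four expressions coincide is the one nonroutine step, and it checks out (e.g.\ the pair $(1,0)$ forces $x=\beta_ix=\beta_iuv\alpha_j^{\,\delta}$ with $\delta=0$, since $\delta=1$ would give $x\alpha_j=x$). What your approach buys is that it avoids the paper's delicate in-line manipulation of powers via contextual commutation and replaces it with a structural argument resting only on the already-proved finiteness lemma; what the paper's approach buys is self-containedness at this point in the text (it does not need well-definedness of the projections) and an explicit rewriting witness for $w^m=w^{m-1}$. One small bookkeeping point: when you check that the projections respect the defining rules you should either also check rules (2b) and (3b) of Definition~\ref{origami-monoid} (they project to consequences of idempotence and the Jones relations, so this is immediate) or cite Corollary~\ref{cor:presentation} to discard them.
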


\begin{proof}
Let $w\in\mathcal{O}_n$. Similarly to Proposition \ref{*-semigroup}, $w$ has three
possible regular forms, but we shall examine only the case where $w=\beta_iuv\alpha_j$, as other cases are similar. In cases where $u$ or $v$ are $1$, the proof follows directly from contextual commutation (Lemma~\ref{lemma:general}).

Consider $u\alpha_j\in\mathcal{O}_n^\alpha\simeq\mathcal{J}_n$ and $v\beta_i\in\mathcal{O}_n^\beta\simeq\mathcal{J}_n$. Because $\mathcal{J}_n$ is aperiodic, there exists some $m\geq 2$ such that $x^m=w^{m-1}$ for all $x\in\mathcal{J}_n$. Since $\mathcal{O}_n^{\alpha}\simeq\mathcal{O}_n^{\beta}\simeq\mathcal{J}_n$, there exist $m_1,m_2\in\mathbb{Z}^+$ such that $(u\alpha_j)^{m_1}=(u\alpha_j)^{m_1-1}$ and $(v\beta_i)^{m_2}=(v\beta_i)^{m_2-1}$. Choose $m>max\{m_1,m_2\}$. Let $w=\beta_iuv\alpha_j$ be in a regular form. Then $w^m=(\beta_iuv\alpha_j)^m=\beta_iuv\alpha_j\beta_iuv\alpha_j\cdots\beta_iuv\alpha_j$. From contextual commutation in Lemma \ref{lemma:general}, we have that $xv\alpha_iy=x\alpha_ivy$ for any $x,y\in\mathcal{O}_n$, therefore $w^m=(\beta_iuv\alpha_j)^m=
\beta_i u(\alpha_jv\beta_iu)^{m-1}v\alpha_j$.  
 Again from contextual commutation (Lemma \ref{lemma:general}), $xv\beta_iu\alpha_jy=xu\alpha_jv\beta_iy$ whenever $x,y\not = 1$.  
 Thus we have $w^m=\beta_i u (\alpha_ju)^{m-1}(\beta_iv)^{m-1}v\alpha_j$.
 Because $m>max\{m_1,m_2\}$, $w^m=\beta_i u (\alpha_ju)^{m-2}(\beta_iv)^{m-2}v\alpha_j=w^{m-1}$. 
 
  Since for all $w\in\mathcal{O}_n$ we have $w^m=w^{m-1}$ for some $m$,  $\mathcal{O}_n$ is aperiodic and thus $\mathcal{H}$-trivial by Proposition 4.2 in \cite{Pin}.
\end{proof}

\medskip
\begin{lemma}\label{core-d-related}
For every $w\in \mc O_n$, $w \,\mc D\, p(w)$.
\end{lemma}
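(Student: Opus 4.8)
The plan is to show that $w$ and its core $p(w)=uv$ (where $u=p_\alpha(w)$ and $v=p_\beta(w)$) lie in the same $\mathscr{D}$-class by establishing $w\,\mathscr{L}\,p(w)$ or by producing an intermediate element $c$ with $w\,\mathscr{L}\,c\,\mathscr{R}\,p(w)$. The starting observation is that by Lemma~\ref{lemma:finite}, any $w$ with $p(w)=p(uv)$ lies in $\{uv,\beta_iuv,uv\alpha_j,\beta_iuv\alpha_j\}$, where $i$ is the index of the first letter of $u$ and $j$ is the index of the last letter of $v$. So it suffices to treat these four cases, and the cases $uv$ is trivial. For the remaining three cases, I would show directly that $\mc O_n w=\mc O_n (uv)$ and $w\mc O_n=(uv)\mc O_n$, i.e. that $w\,\mathscr{H}\,p(w)$, which is even stronger and is consistent with $\mathscr{H}$-triviality just proved.

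The key steps are as follows. First, note $p(w)=uv\in\mc O_n w$ would give $\mc O_n(uv)\subseteq\mc O_n w$, and conversely $w\in\mc O_n(uv)$ gives the reverse inclusion; analogously on the right. Second, for $w=\beta_iuv$: write $u=\alpha_i u'$. Then $\beta_i u = \beta_i\alpha_i u'$, and since $\mathcal{J}_n\cong\mc O_n^\alpha$ is a regular $^R$-semigroup, there is $a\in\mc O_n^\alpha$ with $ua(u) \to u$, hence multiplying $\beta_iuv$ on the right by $a^R u v$-type elements recovers $uv$; more cleanly, $(\alpha_i)(\beta_i u v)=\alpha_i\beta_i u v \to \alpha_i u v$ (using idempotence $\alpha_i\beta_i$ is not idempotent but $\alpha_i\beta_i\alpha_i u'\dots$), so a short direct rewriting using rules (1),(2),(3),(4) and contextual commutation shows $uv\in\mc O_n w$; the reverse inclusion $w=\beta_i uv\in\mc O_n(uv)$ is immediate since $\beta_i\in\mc O_n$. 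For the right ideal, $w\mc O_n=(uv)\mc O_n$ follows because multiplying $uv$ on the right cannot produce a leading $\beta_i$, so one checks $uv\notin$— actually one only needs $(uv)\mc O_n\subseteq w\mc O_n$ and the reverse; since $v$ begins with some $\beta$ and ends with $\beta_j$, right-multiplication of $uv$ by suitable elements gives back $uv$ but not $\beta_iuv$, so instead I would pass through an intermediate element: take $c=\beta_i uv$ itself and show $w\,\mathscr{R}\,p(w)$ fails is not needed — rather use the $\mathscr{D}$ route $w\,\mathscr{L}\,(uv)$ directly, which by symmetry of the four cases we can always arrange. The symmetric cases $uv\alpha_j$ (use the reverse/$^R$ argument) and $\beta_iuv\alpha_j$ (combine both) follow the same pattern, invoking Lemma~\ref{lemma:normalform3} to control the indices $I,J$ of the first letter of $v$ and last letter of $u$ when collapsing $\beta_i\cdots\beta_i$ and $\alpha_j\cdots\alpha_j$ via the Jones relations, exactly as in the proof of Proposition~\ref{*-semigroup}.

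The cleanest unified argument: show $w\,\mathscr{L}\,p(w)$ in cases (1),(3-with-$\gamma_1$),(2); and $w\,\mathscr{R}\,p(w)$ in cases (1),(3-with-$\gamma_2$),(2). Concretely for $w=\beta_iuv$, I claim $\mc O_n w=\mc O_n uv$: $\supseteq$ is trivial, and for $\subseteq$ write $uv=\alpha_i u' v$ and compute $\alpha_i\cdot w=\alpha_i\beta_i u'\cdots$; since $u$ is in Jones normal form starting with $\alpha_i$, one uses $\alpha_i\beta_i\alpha_i\to\alpha_i$ (rules (1),(2),(3) applied within $\alpha_i\beta_i\alpha_i\cdots$) to absorb the $\beta_i$, yielding $\alpha_i w\to uv$, so $uv\in\mc O_n w$. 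For $w=\beta_iuv\alpha_j$ with $u=1$ or $v=1$ the statement reduces to the case of a one-sided decoration handled above; when both are nontrivial, apply the left argument to strip $\beta_i$ (using Lemma~\ref{lemma:normalform3} to know the last letter of $u$ has index within $1$ of $j$ so contextual commutation applies) and the reversed argument to strip $\alpha_j$, obtaining $w\,\mathscr{L}\,(uv\alpha_j)$ and $(uv\alpha_j)\,\mathscr{R}\,(uv)$, hence $w\,\mathscr{D}\,p(w)$ by transitivity through the intermediate $uv\alpha_j$.

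The main obstacle I anticipate is the bookkeeping in the case $w=\beta_iuv\alpha_j$ when $u$ or $v$ is a single generator of the "wrong" index, or when $v$ has the shape $\beta_{i-1}\cdots$ from case (2) or (3) of Corollary~\ref{cor:forms}; there the naive absorption $\beta_i(\cdots)\to(\cdots)$ requires first rewriting $\beta_i u$ to expose a matching $\beta$ to cancel against, and one must verify that the same rewriting does not disturb the trailing $\alpha_j$. I expect this is manageable by invoking Corollary~\ref{cor:forms} / Corollary~\ref{cor:forms2} to restrict the shape of $u$ and $v$ in precisely the decorated cases, and then a short explicit computation (paralleling the three displayed chains in the proof of Proposition~\ref{*-semigroup}) completes each subcase.
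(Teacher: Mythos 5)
There is a genuine gap, and it sits at the computational heart of your argument. You claim that for $w=\beta_i u v$ with $u=\alpha_i u'$, left-multiplying by $\alpha_i$ absorbs the decoration: ``one uses $\alpha_i\beta_i\alpha_i\to\alpha_i$ \dots to absorb the $\beta_i$, yielding $\alpha_i w\to uv$.'' This rewriting is not valid: $\alpha_i\beta_i\alpha_i$ and $\alpha_i$ are distinct elements (the paper's list of the six non-identity elements of $\mathcal{O}_2$ contains both $\alpha_1$ and $\alpha_1\beta_1\alpha_1$), and rules (2),(3) only contract triples of the \emph{same} type with adjacent indices. More robustly, every defining rule preserves the projections $p_\alpha$ and $p_\beta$ (this is exactly the observation used in the $(\Rightarrow)$ direction of Theorem~\ref{thm:main}), so $\alpha_i\cdot(\beta_i uv)$ can never rewrite to $uv$ unless $\beta_i v=v$ already holds in $\mathcal{O}_n^\beta\simeq\mathcal{J}_n$, which is false in general. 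The $\mathscr{L}$-relation $\beta_i uv\,\mathscr{L}\,uv$ is in fact true, but the correct multiplier is $\alpha_J\beta_I$ (with $\alpha_J$ the first letter of $u$ and $\beta_I$ the first letter of $v$, so that $|i-I|\le 1$ by Lemma~\ref{lemma:normalform3} and $\beta_I\beta_i\beta_I\to\beta_I$ absorbs $\beta_i$ into the $\beta$-part); this is the paper's remark \emph{after} the lemma, not its proof. A second problem is that you identify $p(w)$ with $uv$: for $w=\beta_i uv\alpha_j$ one has $p(w)=p_\alpha(w)p_\beta(w)=u\alpha_j\beta_i v$, which need not equal $uv$ as an element, so even a repaired proof of $w\,\mathscr{D}\,uv$ would still leave $uv\,\mathscr{D}\,p(w)$ unproved. (Your aside that $w\,\mathscr{H}\,p(w)$ would be ``consistent with $\mathscr{H}$-triviality'' is also backwards: $\mathscr{H}$-triviality would force $w=p(w)$, which already fails for $w=\beta_1\alpha_1$.)

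The paper's proof avoids all of this by working two-sidedly with the actual core. Writing $w=\gamma_1 uv\gamma_2$ in regular form, so that $p(w)=u\gamma_2\gamma_1 v$, it verifies $\alpha_i w\beta_j\to p(w)$ and $\gamma_1\,p(w)\,\gamma_2\to w$ using only contextual commutation and idempotence; this gives $\mathcal{O}_n w\mathcal{O}_n=\mathcal{O}_n p(w)\mathcal{O}_n$, i.e.\ $w\,\mathscr{J}\,p(w)$, and $\mathscr{J}=\mathscr{D}$ in a finite monoid. If you want to keep your one-sided ($\mathscr{L}$/$\mathscr{R}$) strategy, you would need to (i) replace the false absorption step by the $\alpha_J\beta_I$-type multipliers and the Jones relation on the $\beta$-side, and (ii) relate $uv$ to the true core $u\gamma_2\gamma_1 v$ separately; at that point the two-sided argument is strictly shorter.
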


\begin{proof}
Let $w\in \mc O_n$ have a regular form $w=\gamma_iuv{\gamma_j}$  
where $\gamma_i\in\{1,\beta_i\}$ and ${\gamma_j}\in\{1,\alpha_j\}$, $i$ is the index of the first letter of $u$ and $j$ is the index of the last letter of $v$ (provided $u,v\neq 1$). We assume $u,v\neq 1$, as if one or both of $u=1$ or $v=1$ is true, then $w=v'u'$ for some $v'\in\mathcal{O}_n^{\beta}$ and $u'\in\mathcal{O}_n^{\alpha}$, which will be treated later.
Then $u=\alpha_i u'$ and $v=v'\beta_j$ and $p(w)=u\gamma_2\gamma_1v$. Using contextual commutation and idempotence, we may rewrite $\alpha_i w \beta_j=\alpha_i \gamma_1 uv \gamma_2 \beta_j\to \alpha_i \alpha_i u'\gamma_1 \gamma_2 v'\beta_j \beta_j\to\alpha_iu'\gamma_2\gamma_1v'\beta_j\to u\gamma_2\gamma_1v= p(w)$.
On the other side, $\gamma_1p(w)\gamma_2=\gamma_1u\gamma_2\gamma_1v\gamma_2\to\gamma_1\gamma_1uv\gamma_2\gamma_2= w$. Hence, $w \,\mc D\, p_\alpha(w)p_\beta(w)$.

In the case when $u=1$ or $v=1$ or both, then $w=z_1z_2$ where $z_1\in \mc O_n^\beta $ and $z_2\in \mc O_n^\alpha$ such that $z_1,z_2\neq 1$.  
Suppose $z_1=z_1'\beta_j$ and 
$z_2=\alpha_i z_2'$. We have $p(w)=z_2z_1$.
Then $\alpha_i w \beta_j= \alpha_i z_1 z_2 \beta_j\to\alpha_i z_2 z_1 \beta_j \to
\alpha_i\alpha_i z_2' z_1'\beta_j\beta_j\to \alpha_iz_2z_1\beta_j \to z_2z_1=p(w)$.
Similarly, if $\beta_I$ is the first symbol of $z_1$ and $\alpha_J$ is the last symbol of $z_2$, we have  $\beta_I p(w)\alpha_J=\beta_I z_2 z_1 \alpha_J=\beta_I z_1 z_2 \alpha_J =z_1z_2 $. Hence, again $w \,\mc D\, p_\alpha(w)p_\beta(w)=p(w)$.
\end{proof}

Furthermore, one can see that $\beta_iuv$ is $\mathcal{L}$-related to $uv$ when $v\neq 1$ and $uv\alpha_j$ is $\mathcal{R}$-related to $uv$ when $u\neq 1$ in a similar manner to above. To illustrate, by Lemma~\ref{lemma:normalform3}, if $I$ is the index of the first letter of $v$, then $|i-I|\leq 1$. Let $a_J$ be the first letter of $u$. Then $\alpha_J\beta_I(\beta_iuv)=\alpha_Ju\beta_I\beta_iv=uv$ from idempotence, the Jones relations (2)(3), and contextual commutation (Lemma~\ref{lemma:general}). 

\medskip
\begin{theorem}\label{thm:main}
Denote $\mathcal{D}_w$ to be the $\mathcal{D}$-class of the word $w$ and suppose $p(w)=uv$ for $u\in \mc O_n^\alpha$ and $v\in \mc O_n^\beta$. 
Then $z\in \mathcal{D}_{uv}$ if and only if $p_\alpha(z)\in \mc D_u$ and $p_\beta(z)\in \mc D_v$.
\end{theorem}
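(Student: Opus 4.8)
The plan is to route everything through the two projections together with Lemma~\ref{core-d-related}. First I would record that $p_\alpha\colon\mathcal{O}_n\to\mathcal{O}_n^\alpha$ and $p_\beta\colon\mathcal{O}_n\to\mathcal{O}_n^\beta$ are well-defined surjective monoid homomorphisms: one checks that each of the relation schemes in Corollary~\ref{cor:presentation} (rules (1)--(5), (1a), (2a), (3a)) is carried by $p_\alpha$ either to a defining relation of $\mathcal{J}_n\simeq\mathcal{O}_n^\alpha$ or to a trivial identity, and symmetrically for $p_\beta$. A surjective monoid homomorphism $\phi\colon M\to N$ satisfies $\phi(MaM)=N\phi(a)N$, hence preserves the $\mathcal{J}$-relation; since $\mathcal{O}_n$, $\mathcal{O}_n^\alpha$, $\mathcal{O}_n^\beta$ are all finite (Lemma~\ref{lemma:finite}), where $\mathcal{D}=\mathcal{J}$, it preserves $\mathcal{D}$. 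Because $p_\alpha(uv)=u$ and $p_\beta(uv)=v$, the forward implication is then immediate: $z\,\mathcal{D}\,uv$ in $\mathcal{O}_n$ forces $p_\alpha(z)\,\mathcal{D}\,u$ and $p_\beta(z)\,\mathcal{D}\,v$. (Since $\mathcal{O}_n^\alpha\,a\,\mathcal{O}_n^\alpha\subseteq\mathcal{O}_n\,a\,\mathcal{O}_n$ and symmetrically, for elements of $\mathcal{O}_n^\alpha$ the $\mathcal{D}$-class computed inside $\mathcal{O}_n^\alpha$ coincides with the one inside $\mathcal{O}_n$, so the statement reads the same under either interpretation of $\mathcal{D}_u$.)

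For the converse, put $a=p_\alpha(z)$ and $b=p_\beta(z)$, so that $z\,\mathcal{D}\,p(z)=ab$ by Lemma~\ref{core-d-related}; it then suffices to prove $ab\,\mathcal{D}\,uv$ assuming $a\,\mathcal{D}\,u$ in $\mathcal{O}_n^\alpha$ and $b\,\mathcal{D}\,v$ in $\mathcal{O}_n^\beta$. I would pass through the intermediate word $ub$, showing $ab\,\mathcal{D}\,ub$ and then $ub\,\mathcal{D}\,uv$. The computational tool is the identity $p(s\,xy\,t)=(sxt)\,y$, valid for $x,s,t\in\mathcal{O}_n^\alpha$ and $y\in\mathcal{O}_n^\beta$, together with its mirror $p(s'\,xy\,t')=x\,(s'yt')$ for $s',t'\in\mathcal{O}_n^\beta$; both are immediate from $p_\alpha,p_\beta$ being homomorphisms that kill the opposite family of generators. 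Since $\mathcal{D}=\mathcal{J}$ in the finite monoid $\mathcal{O}_n^\alpha$, from $a\,\mathcal{D}\,u$ we obtain $s_1,t_1,s_2,t_2\in\mathcal{O}_n^\alpha\cup\{1\}$ with $u=s_1at_1$ and $a=s_2ut_2$. Then $s_1(ab)t_1\in\mathcal{O}_n(ab)\mathcal{O}_n$ and $p(s_1(ab)t_1)=(s_1at_1)b=ub$, so $ub\,\mathcal{D}\,s_1(ab)t_1$ by Lemma~\ref{core-d-related}, giving $\mathcal{O}_n(ub)\mathcal{O}_n\subseteq\mathcal{O}_n(ab)\mathcal{O}_n$; symmetrically $s_2(ub)t_2\in\mathcal{O}_n(ub)\mathcal{O}_n$ and $p(s_2(ub)t_2)=(s_2ut_2)b=ab$, giving the reverse inclusion. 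Hence $\mathcal{O}_n(ab)\mathcal{O}_n=\mathcal{O}_n(ub)\mathcal{O}_n$, so $ab\,\mathcal{D}\,ub$ (finiteness). Running the same argument with $b\,\mathcal{D}\,v$, the mirror identity, and $\beta$-factors sandwiching $ub$ and $uv$ yields $ub\,\mathcal{D}\,uv$. Chaining gives $z\,\mathcal{D}\,ab\,\mathcal{D}\,ub\,\mathcal{D}\,uv$, as required.

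Two points need routine care. First, checking that $p_\alpha$ and $p_\beta$ descend to the quotient $\mathcal{O}_n$ amounts to verifying the relation schemes above, which is short but should be written out. Second, the degenerate cases $u=1$ or $v=1$: there $\mathcal{D}_u$ (resp.\ $\mathcal{D}_v$) is the singleton $\{1\}$ in $\mathcal{J}_n$, and the hypothesis forces the corresponding projection of $z$ to be $1$, whereupon the core relation $z\,\mathcal{D}\,p(z)$ closes the argument directly (the sandwich computations also remain valid with empty factors, keeping all words in the core form of Corollary~\ref{cor:form}). The one genuine difficulty, which this routing is designed to circumvent, is that $\alpha$'s and $\beta$'s fail to commute at the two ends of a word --- contextual commutation (Lemma~\ref{lemma:general}) requires a context on both sides --- so one cannot simply transport an $\mathcal{R}$- or $\mathcal{L}$-relation of $\mathcal{O}_n^\alpha$ across the $\alpha/\beta$ interface of $ab$; Lemma~\ref{core-d-related} lets every boundary mismatch be absorbed as a passage to the core, i.e.\ as a harmless move inside a single $\mathcal{D}$-class.
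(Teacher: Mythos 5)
Your proof is correct. The direction showing that $z\,\mathcal{D}\,uv$ forces $p_\alpha(z)\in\mathcal{D}_u$ and $p_\beta(z)\in\mathcal{D}_v$ is essentially the paper's: both verify that $p_\alpha,p_\beta$ descend to homomorphisms by checking the relation schemes of Corollary~\ref{cor:presentation} and then push the two-sided ideal equalities through the projections. Where you genuinely diverge is the converse. The paper proves $u_1v_1\,\mathcal{D}\,u_2v_2$ from $u_1\,\mathcal{D}\,u_2$ and $v_1\,\mathcal{D}\,v_2$ by an explicit rewriting: it forms $\alpha_i x_1y_1u_1v_1x_2y_2\beta_j$ and uses contextual commutation (Lemma~\ref{lemma:general}) together with idempotence to sort the $\alpha$-witnesses past the $\beta$'s, padding with $\alpha_i$ and $\beta_j$ precisely so that contextual commutation applies when $x_1y_1$ or $x_2y_2$ is empty. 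You avoid that computation entirely by invoking Lemma~\ref{core-d-related} a second time: the sandwich $s_1(ab)t_1$ lies in the principal ideal of $ab$ and has core $ub$, hence is $\mathcal{D}$-equivalent to $ub$, and the two ideal inclusions follow without ever commuting a letter; the chain $z\,\mathcal{D}\,ab\,\mathcal{D}\,ub\,\mathcal{D}\,uv$ then closes the argument. This buys a cleaner treatment of exactly the boundary difficulty you identify (contextual commutation needing context on both sides), at the cost of a two-step chain where the paper does it in one. Both arguments rest on the same two pillars---the projections being well-defined homomorphisms and Lemma~\ref{core-d-related}---so the difference is one of execution in the harder direction rather than of overall strategy.
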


\begin{proof}
($\Leftarrow$) It was established in Lemma \ref{core-d-related} that each word is $\mc D$-related to its core.
Thus, it suffices to show that if $u_1\mathcal{D}u_2$ in $\mathcal{O}_n^\alpha$ and $v_1\mathcal{D}v_2$ in $\mathcal{O}_n^\beta$, then $u_1v_1\mathcal{D}u_2v_2$. We assume $u_1,u_2,v_1,v_2\neq 1$. Otherwise, if $u_1=1$, then $u_2$ must also be $1$, as in $\mathcal{J}_n$, $1$ is $\mathcal{D}$-related only to itself. Then $v_1\mathcal{D}v_2$.

There exist $x_1,\dots,x_4 \in \mathcal{O}_n^\alpha$ and $y_1,\dots,y_4 \in \mathcal{O}_n^\beta$ such that $x_1u_1 x_2=u_2$, $x_3 u_2 x_4=u_1$, $y_1 v_1 y_2=v_2$, $y_3 v_2 y_4=v_1$. Let $i$ be the index of the first letter of $u_2$ and $j$ be the index of the last letter of $v_2$. We then have that $\alpha_ix_1y_1u_1v_1x_2y_2\beta_j\to\alpha_ix_1u_1x_2y_1v_1y_2\beta_j\to\alpha_iu_2v_2\beta_j\to u_2v_2$ from contextual commutation (Lemma ~\ref{lemma:general}) and idempotence. The $\alpha_i$ and $\beta_i$ are added to ensure that contextual commutation may be used, as $x_1y_1=1$ or $x_2y_2=1$ may be the case. By switching the appropriate indices and $x$'s and $y$'s, one may obtain $u_1v_1$ from right and left multiplication on $u_2v_2$ in an identical fashion.
Then if $w\in\mathcal{O}_n$ has core $xy$ for $x\in\mathcal{O}_n^{\alpha}, y\in\mathcal{O}_n^{\beta}$ and $x\mathcal{D}u$, $y\mathcal{D}v$, we have that $w,xy\in\mathcal{D}_{uv}$. 

($\Rightarrow$) 
We observe that $z_1=z_2$ in $\mc O_n$ implies that there is a sequence of rewriting rules among (1)--(5), (1a), (2a), and (3a) that rewrites $z_1$ into $z_2$ by Corollary~\ref{cor:presentation}. Rules (1), (2), (3), and (5) are rules within $\mc O_n^\gamma$ for $\gamma\in \{\alpha,\beta\}$.
We observe that if $w_1\rar w_2$ is one of the rules (4), (1a), (2a), and (3a), then $p_\gamma(w_1)\rar p_\gamma(w_2)$ is one of the rules $\gamma_i\rar \gamma_i$, (1), (2), (3) in $\mc O_n^\gamma$, respectively. Therefore, $z_1=z_2$ implies that $p_\gamma(z_1)=p_\gamma(z_2)$.   
Let $w\in \mc O_n$ and $w\, \mc D\, uv$ for $u\in \mc O_n^\alpha$ and $v\in \mc O_n^\beta$. 
By Lemma~\ref{core-d-related}, $p(w)=p_\alpha(w)p_\beta(w)\,\mc D \,uv$. 
There are $z_1,\ldots,z_4$ such that  $z_1p_\alpha(w)p_\beta(w)z_2=uv$ and $p_\alpha(w)p_\beta(w)=z_3uvz_4$.
We have $p_\alpha(z_1wz_2)=p_\alpha(z_1)p_{\alpha}(w)p_\alpha(z_2)=u$ and 
$p_\beta(z_1wz_2)=p_\beta(z_1)p_\beta(w)p_\beta(z_2)=v$. Similarly, 
$p_\alpha(w)=p_\alpha (z_3)up_\alpha(z_4)$ and 
$p_\beta(z_3)vp_\beta(z_4)=p_\beta(w)$. Therefore $p_\alpha(z)\in \mc D_u$ and $p_\beta(z)\in \mc D_v$.

\end{proof}

One may see from Theorem~\ref{thm:main} that  
the $\mathcal{D}$-classes of $\mathcal{O}_n$ are \textit{exactly} such $\mathcal{D}_{xy}$ as described with the projection map,
as each word in the monoid appears in exactly one $\mathcal{D}$-class.

\medskip
Directly from Theorem~\ref{thm:main}, we have the following corollary.

\medskip
\begin{corollary}
There is a bijection between the Green's classes of $\mathcal{O}_n$ and those of $\mathcal{J}_n\times\mathcal{J}_n$.
\end{corollary}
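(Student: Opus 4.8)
The plan is to show that the projections $p_\alpha,p_\beta$ assemble into the desired bijection. Using the isomorphisms $\mathcal{O}_n^\alpha\simeq\mathcal{J}_n\simeq\mathcal{O}_n^\beta$ (sending $\alpha_i,\beta_i$ to $h_i$), consider the map $\varphi\colon\mathcal{O}_n\to\mathcal{J}_n\times\mathcal{J}_n$ given by $\varphi(w)=(p_\alpha(w),p_\beta(w))$; concretely, if the core of $w$ is $p(w)=uv$ with $u\in\mathcal{O}_n^\alpha$ and $v\in\mathcal{O}_n^\beta$, then $\varphi(w)$ is the pair $(u,v)$. First I would record that $\varphi$ is a surjective monoid homomorphism. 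It is a homomorphism because, as already observed in the proof of Theorem~\ref{thm:main}, each defining rule of $\mathcal{O}_n$ (among (1)--(5), (1a), (2a), (3a)) is carried by $p_\gamma$ either to a Jones rule in $\mathcal{O}_n^\gamma$ or to the trivial identity $\gamma_i\to\gamma_i$, so each $p_\gamma$ descends to a well-defined homomorphism on $\mathcal{O}_n$; it is surjective because $\varphi(uv)=(u,v)$ for every $u\in\mathcal{O}_n^\alpha$, $v\in\mathcal{O}_n^\beta$.

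Since any monoid homomorphism preserves the relations $\mathcal{L}$, $\mathcal{R}$, and hence $\mathcal{D}$, the map $\varphi$ induces a well-defined map $\overline{\varphi}$ from the $\mathcal{D}$-classes of $\mathcal{O}_n$ to the $\mathcal{D}$-classes of $\mathcal{J}_n\times\mathcal{J}_n$, namely $\overline{\varphi}(\mathcal{D}_w)=\mathcal{D}_{\varphi(w)}$. Surjectivity of $\overline{\varphi}$ is immediate: by the standard componentwise description of Green's relations in a direct product (together with $\mathcal{D}=\mathcal{L}\circ\mathcal{R}$), every $\mathcal{D}$-class of $\mathcal{J}_n\times\mathcal{J}_n$ equals $\mathcal{D}_{(u,v)}$ for some $u,v\in\mathcal{J}_n$, and $\mathcal{D}_{(u,v)}=\overline{\varphi}(\mathcal{D}_{uv})$.

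The one step with content is injectivity, and here Theorem~\ref{thm:main} does the work. Suppose $\overline{\varphi}(\mathcal{D}_{w_1})=\overline{\varphi}(\mathcal{D}_{w_2})$, i.e., $\varphi(w_1)\,\mathcal{D}\,\varphi(w_2)$ in $\mathcal{J}_n\times\mathcal{J}_n$. Writing the cores as $p(w_t)=u_tv_t$, the componentwise description of $\mathcal{D}$ in a product gives $u_1\,\mathcal{D}\,u_2$ (in $\mathcal{O}_n^\alpha\simeq\mathcal{J}_n$) and $v_1\,\mathcal{D}\,v_2$ (in $\mathcal{O}_n^\beta\simeq\mathcal{J}_n$). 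The ``$\Leftarrow$'' direction of Theorem~\ref{thm:main} then gives $u_1v_1\,\mathcal{D}\,u_2v_2$ in $\mathcal{O}_n$, while Lemma~\ref{core-d-related} gives $w_t\,\mathcal{D}\,p(w_t)=u_tv_t$; composing these $\mathcal{D}$-equivalences yields $w_1\,\mathcal{D}\,w_2$, hence $\mathcal{D}_{w_1}=\mathcal{D}_{w_2}$. Thus $\overline{\varphi}$ is a bijection of $\mathcal{D}$-classes, and since both monoids are finite, $\mathcal{D}$ and $\mathcal{J}$ coincide, so $\overline{\varphi}$ is simultaneously a bijection of $\mathcal{J}$-classes.

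Because the corollary is essentially a repackaging of Theorem~\ref{thm:main}, I do not anticipate a genuine obstacle; the two things to watch are that $p_\alpha,p_\beta$ really descend to homomorphisms of $\mathcal{O}_n$ (verified inside the proof of Theorem~\ref{thm:main}) and the degenerate cases $u_t=1$ or $v_t=1$, where the symbols $\mathcal{D}_{uv}$ and $\mathcal{D}_{(u,v)}$ must be read through the identifications $\mathcal{O}_n^\alpha\simeq\mathcal{J}_n\simeq\mathcal{O}_n^\beta$ — but precisely these cases are already treated in the proofs of Lemma~\ref{core-d-related} and Theorem~\ref{thm:main}. It is worth stressing that the statement concerns $\mathcal{D}$- (equivalently $\mathcal{J}$-) classes and not $\mathcal{L}$- or $\mathcal{R}$-classes: a single $\mathcal{D}$-class of $\mathcal{O}_n$ may contain strictly more $\mathcal{L}$- and $\mathcal{R}$-classes than its image in $\mathcal{J}_n\times\mathcal{J}_n$, so no bijection on $\mathcal{L}$- or $\mathcal{R}$-classes should be expected, and one should argue through the componentwise structure of $\mathcal{D}$ in a product rather than through cardinalities.
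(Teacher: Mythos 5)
Your proposal is correct and follows the same route as the paper, which simply asserts that the corollary follows directly from Theorem~\ref{thm:main}; you have unpacked that assertion in the natural way (the projections induce a well-defined map on $\mathcal{D}$-classes, surjectivity via $\varphi(uv)=(u,v)$, injectivity via the ``$\Leftarrow$'' direction of the theorem together with Lemma~\ref{core-d-related} and the componentwise description of $\mathcal{D}$ in a direct product). No gaps.
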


\medskip

\begin{figure}[h]
    \centering
    \begin{tikzpicture}[vertex/.style={circle, draw=black!60, fill=gray!5, very thick, minimum size=1.5cm, inner sep=2mm}]

\node 
(a0) at (1,1.5) {[$\varepsilon$]};
\node 
(a1) at (0,0) {[$\alpha_1$]};
\node
(b1) at (2,0) {[$\beta_1$]};
\node
(a1b1) at (1,-1.5) {[$\alpha_1\beta_1$]};
\node
(a1a3) at (-1,-1.5) {[$\alpha_1\alpha_3$]};
\node
(b1b3) at (3,-1.5) {[$\beta_1\beta_3$]};
\node
(a1a3b1) at (0,-3) {[$\alpha_1\alpha_3\beta_1$]};
\node
(a1b1b3) at (2,-3) {[$\alpha_1\beta_1\beta_3$]};
\node
(a1a3b1b3) at (1,-4.5) {[$\alpha_1\alpha_3\beta_1\beta_3$]};

\draw[->, ultra thick] (a0) -- (a1);
\draw[->, ultra thick] (a0) -- (b1);
\draw[->, ultra thick] (a1) -- (a1a3);
\draw[->, ultra thick] (a1) -- (a1b1);
\draw[->, ultra thick] (b1) -- (a1b1);
\draw[->, ultra thick] (b1) -- (b1b3);
\draw[->, ultra thick] (a1a3) -- (a1a3b1);
\draw[->, ultra thick] (a1b1) -- (a1a3b1);
\draw[->, ultra thick] (a1b1) -- (a1b1b3);
\draw[->, ultra thick] (b1b3) -- (a1b1b3);
\draw[->, ultra thick] (a1a3b1) -- (a1a3b1b3);
\draw[->, ultra thick] (a1b1b3) -- (a1a3b1b3);
    
\end{tikzpicture}
\caption{$\mathcal{D}$-classes of $\mathcal{O}_5$. A node labeled $[w]$ denotes the $\mathcal{D}$-class of $w$. An arrow from $w$ to $v$ denotes that $v\leq_{\mathcal{D}}w$. The $\mathcal{D}$-classes of $\mathcal{O}_n^{\alpha}$ and $\mathcal{O}_n^{\beta}$ are along the upper flanks of the diamond.}
\label{fig:o5_dclasses}\vskip -3mm
\end{figure}
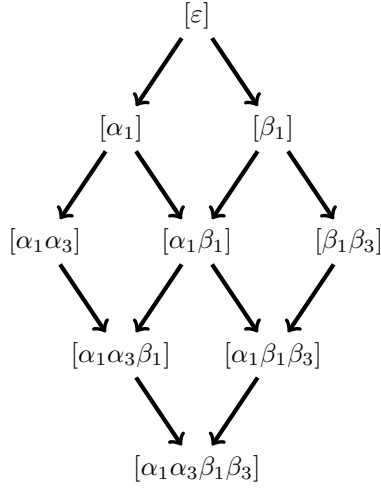

Notice that for $\mathcal{O}_n^\gamma\simeq\mathcal{J}_n$  ($\gamma \in\{ \alpha, \beta\}$), since every (non-identity) word is $\mathcal{D}$-related to a word of the form $\gamma_1\gamma_3\dots\gamma_{2k-1}$, where  $0<k\leq\lfloor\frac{n}{2}\rfloor$, we find that in $\mathcal{O}_n$, such is true for words of the form $\alpha_1\alpha_3\dots\alpha_{2k_1-1}\beta_1\beta_3\dots\beta_{2k_2-1}$ where $0<k_1,k_2\leq \lfloor\frac{n}{2}\rfloor$. This then gives the diagram of the Green's classes of $\mathcal{O}_n$ a diamond shape, pictured for $\mathcal{O}_5$ in Figure \ref{fig:o5_dclasses}. The figure for $\mathcal{O}_6$ looks identical to that of $\mathcal{O}_5$ and that for $\mathcal{O}_7$ would have seven more classes to extend the $3\times3$ diamond to a $4\times4$ diamond. The diagrams for $\mathcal{O}_n$ where $n$ is odd look identical to their $n-1$ counterparts due to the fact that increasing diagrams from an even $n-1$ to an odd $n$ strings cannot allow an additional cup on the top of any one word's diagram.

\section{Concluding remarks}

With this paper, we propose normal forms for elements of the origami monoids $\mc O_n$ 
that rely on 
normal forms for elements of the Jones monoid.
In search of normal forms, we discovered new relations resulting from the presentation inspired by DNA origami structures, such as contextual commutation (Lemma~\ref{lemma:general}). 
Contextual commutation
relations between two types of generators are used to separate the generator types  as much as possible, such that the  Jones normal forms can be used for each generator type in the description of the normal forms. 

We showed 
a relationship of  the Green's  $\mathscr{D}$-classes of $\mathcal{O}_n$ and $\mathcal{J}_n$ under the projection $p$, as was hinted in~\cite{GarrettJKS19}. 
Specifically, we showed that the $\mathscr{D}$-classes of $\mathcal{O}_n$ are in one-to-one correspondence with the $\mathscr{D}$-classes of $\mathcal{J}_n \times \mathcal{J}_n$.

The definition of origami  monoids 
by presentation is based on the presentations of Jones monoids. The set of generators of $\mathcal{J}_n$ is doubled into two types, where the submonoid generated by each type is isomorphic to $\mathcal{J}_n$; partial commutation among differing types of generators is allowed; and substitution-type rules inspired by relations from $\mathcal{J}_n$ are added. While these relations occurred naturally from the examination of DNA origami structures, the construction of doubling generators and imposing substitution relations can be generalized to other presentations of algebraic structures. More than two types of generators may also be used.
How such a general approach of expanding algebraic systems affects properties of the new constructs is an interesting algebraic question. 
It is also notable that contextual commutation appears in this construction, despite not being apparent from the defining relations. This phenomenon may generalize to other monoids under appropriate doubling constructions as well. Identifying conditions that induce such contextual commutation may be of interest, because separating the generators within a word may be a way to study larger classes of algebraic systems. 

\section*{Acknowledgment}
This work is partially supported by grants NSF DMS-2054321, CCF-2107267, and the W.M. Keck Foundation.

\bibliography{dna-new}

%% BioMed_Central_Bib_Style_v1.01

\begin{thebibliography}{20}
% BibTex style file: bmc-mathphys.bst (version 2.1), 2014-07-24
\ifx \bisbn   \undefined \def \bisbn  #1{ISBN #1}\fi
\ifx \binits  \undefined \def \binits#1{#1}\fi
\ifx \bauthor  \undefined \def \bauthor#1{#1}\fi
\ifx \batitle  \undefined \def \batitle#1{#1}\fi
\ifx \bjtitle  \undefined \def \bjtitle#1{#1}\fi
\ifx \bvolume  \undefined \def \bvolume#1{\textbf{#1}}\fi
\ifx \byear  \undefined \def \byear#1{#1}\fi
\ifx \bissue  \undefined \def \bissue#1{#1}\fi
\ifx \bfpage  \undefined \def \bfpage#1{#1}\fi
\ifx \blpage  \undefined \def \blpage #1{#1}\fi
\ifx \burl  \undefined \def \burl#1{\textsf{#1}}\fi
\ifx \doiurl  \undefined \def \doiurl#1{\url{https://doi.org/#1}}\fi
\ifx \betal  \undefined \def \betal{\textit{et al.}}\fi
\ifx \binstitute  \undefined \def \binstitute#1{#1}\fi
\ifx \binstitutionaled  \undefined \def \binstitutionaled#1{#1}\fi
\ifx \bctitle  \undefined \def \bctitle#1{#1}\fi
\ifx \beditor  \undefined \def \beditor#1{#1}\fi
\ifx \bpublisher  \undefined \def \bpublisher#1{#1}\fi
\ifx \bbtitle  \undefined \def \bbtitle#1{#1}\fi
\ifx \bedition  \undefined \def \bedition#1{#1}\fi
\ifx \bseriesno  \undefined \def \bseriesno#1{#1}\fi
\ifx \blocation  \undefined \def \blocation#1{#1}\fi
\ifx \bsertitle  \undefined \def \bsertitle#1{#1}\fi
\ifx \bsnm \undefined \def \bsnm#1{#1}\fi
\ifx \bsuffix \undefined \def \bsuffix#1{#1}\fi
\ifx \bparticle \undefined \def \bparticle#1{#1}\fi
\ifx \barticle \undefined \def \barticle#1{#1}\fi
\bibcommenthead
\ifx \bconfdate \undefined \def \bconfdate #1{#1}\fi
\ifx \botherref \undefined \def \botherref #1{#1}\fi
\ifx \url \undefined \def \url#1{\textsf{#1}}\fi
\ifx \bchapter \undefined \def \bchapter#1{#1}\fi
\ifx \bbook \undefined \def \bbook#1{#1}\fi
\ifx \bcomment \undefined \def \bcomment#1{#1}\fi
\ifx \oauthor \undefined \def \oauthor#1{#1}\fi
\ifx \citeauthoryear \undefined \def \citeauthoryear#1{#1}\fi
\ifx \endbibitem  \undefined \def \endbibitem {}\fi
\ifx \bconflocation  \undefined \def \bconflocation#1{#1}\fi
\ifx \arxivurl  \undefined \def \arxivurl#1{\textsf{#1}}\fi
\csname PreBibitemsHook\endcsname

%%% 1
\bibitem[\protect\citeauthoryear{Rothemund}{2006}]{rothemund-folding-2006}
\begin{barticle}
\bauthor{\bsnm{Rothemund}, \binits{P.W.K.}}:
\batitle{Folding {DNA} to create nanoscale shapes and patterns}.
\bjtitle{Nature}
\bvolume{440}(\bissue{7082}),
\bfpage{297}--\blpage{302}
(\byear{2006})
\end{barticle}
\endbibitem

%%% 2
\bibitem[\protect\citeauthoryear{Han et~al.}{2011}]{HPND11}
\begin{barticle}
\bauthor{\bsnm{Han}, \binits{D.}},
\bauthor{\bsnm{Pal}, \binits{S.}},
\bauthor{\bsnm{Nangreave}, \binits{J.}},
\bauthor{\bsnm{Deng}, \binits{Z.}},
\bauthor{\bsnm{Liu}, \binits{Y.}},
\bauthor{\bsnm{Yan}, \binits{H.}}:
\batitle{{{DNA} origami with complex curvatures in three-dimensional space.}}
\bjtitle{Science}
\bvolume{332}(\bissue{6027}),
\bfpage{342}--\blpage{6}
(\byear{2011})
\doiurl{10.1126/science.1202998}
\end{barticle}
\endbibitem

%%% 3
\bibitem[\protect\citeauthoryear{Marchi et~al.}{2014}]{Marchi2014}
\begin{barticle}
\bauthor{\bsnm{Marchi}, \binits{A.N.}},
\bauthor{\bsnm{Saaem}, \binits{I.}},
\bauthor{\bsnm{Vogen}, \binits{B.N.}},
\bauthor{\bsnm{Brown}, \binits{S.}},
\bauthor{\bsnm{LaBean}, \binits{T.H.}}:
\batitle{{Toward larger {DNA} origami.}}
\bjtitle{Nano Letters}
\bvolume{14}(\bissue{10}),
\bfpage{5740}--\blpage{7}
(\byear{2014})
\end{barticle}
\endbibitem

%%% 4
\bibitem[\protect\citeauthoryear{Han et~al.}{2017}]{HQMW17}
\begin{barticle}
\bauthor{\bsnm{Han}, \binits{D.}},
\bauthor{\bsnm{Qi}, \binits{X.}},
\bauthor{\bsnm{Myhrvold}, \binits{C.}},
\bauthor{\bsnm{Wang}, \binits{B.}},
\bauthor{\bsnm{Dai}, \binits{M.}},
\bauthor{\bsnm{Jiang}, \binits{S.}},
\bauthor{\bsnm{Bates}, \binits{M.}},
\bauthor{\bsnm{Liu}, \binits{Y.}},
\bauthor{\bsnm{An}, \binits{B.}},
\bauthor{\bsnm{Zhang}, \binits{F.}},
\bauthor{\bsnm{Yan}, \binits{H.}},
\bauthor{\bsnm{Yin}, \binits{P.}}:
\batitle{{Single-stranded {DNA} and RNA origami.}}
\bjtitle{Science}
\bvolume{358}(\bissue{6369}),
\bfpage{2648}
(\byear{2017})
\doiurl{10.1126/science.aao2648}
\end{barticle}
\endbibitem

%%% 5
\bibitem[\protect\citeauthoryear{Wang et~al.}{2022}]{Bathe-hex}
\begin{barticle}
\bauthor{\bsnm{Wang}, \binits{X.}},
\bauthor{\bsnm{Jun}, \binits{H.}},
\bauthor{\bsnm{Bathe}, \binits{M.}}:
\batitle{Programming 2d supramolecular assemblies with wireframe {DNA} origami}.
\bjtitle{Journal of the American Chemical Society}
\bvolume{144}(\bissue{10}),
\bfpage{4403}--\blpage{4409}
(\byear{2022})
\end{barticle}
\endbibitem

%%% 6
\bibitem[\protect\citeauthoryear{Garrett et~al.}{2019}]{GarrettJKS19}
\begin{bchapter}
\bauthor{\bsnm{Garrett}, \binits{J.}},
\bauthor{\bsnm{Jonoska}, \binits{N.}},
\bauthor{\bsnm{Kim}, \binits{H.}},
\bauthor{\bsnm{Saito}, \binits{M.}}:
\bctitle{Algebraic systems motivated by {DNA} origami}.
In: \bbtitle{Proceedings of the 8th International Conference on Algebraic Informatics},
pp. \bfpage{164}--\blpage{176}
(\byear{2019})
\end{bchapter}
\endbibitem

%%% 7
\bibitem[\protect\citeauthoryear{Garrett et~al.}{2021}]{NACO-Garrett2021}
\begin{barticle}
\bauthor{\bsnm{Garrett}, \binits{J.}},
\bauthor{\bsnm{Jonoska}, \binits{N.}},
\bauthor{\bsnm{Kim}, \binits{H.}},
\bauthor{\bsnm{Saito}, \binits{M.}}:
\batitle{{DNA} origami words, graphical structures and their rewriting systems}.
\bjtitle{Natural Computing}
\bvolume{20}(\bissue{2}),
\bfpage{217}--\blpage{231}
(\byear{2021})
\end{barticle}
\endbibitem

%%% 8
\bibitem[\protect\citeauthoryear{Borisavljevi\'{c} et~al.}{2002}]{BDP}
\begin{barticle}
\bauthor{\bsnm{Borisavljevi\'{c}}, \binits{M.}},
\bauthor{\bsnm{Do\v{s}en}, \binits{K.}},
\bauthor{\bsnm{Petric}, \binits{Z.}}:
\batitle{Kauffman monoids}.
\bjtitle{Journal of Knot Theory and its Ramifications}
\bvolume{11}(\bissue{2}),
\bfpage{127}--\blpage{143}
(\byear{2002})
\end{barticle}
\endbibitem

%%% 9
\bibitem[\protect\citeauthoryear{Jones}{1983}]{Jones83}
\begin{barticle}
\bauthor{\bsnm{Jones}, \binits{V.F.R.}}:
\batitle{Index for subfactors}.
\bjtitle{Inventiones Matheematicae}
\bvolume{72},
\bfpage{1}--\blpage{25}
(\byear{1983})
\end{barticle}
\endbibitem

%%% 10
\bibitem[\protect\citeauthoryear{East et~al.}{2018}]{East3}
\begin{barticle}
\bauthor{\bsnm{East}, \binits{J.}},
\bauthor{\bsnm{Mitchell}, \binits{J.D.}},
\bauthor{\bsnm{Ru\v{s}kue}, \binits{N.}},
\bauthor{\bsnm{Torpey}, \binits{M.}}:
\batitle{Congruence lattices of finite diagram monoids}.
\bjtitle{Advances in Mathematics}
\bvolume{333},
\bfpage{931}--\blpage{1003}
(\byear{2018})
\end{barticle}
\endbibitem

%%% 11
\bibitem[\protect\citeauthoryear{Abramsky}{2009}]{temperley}
\begin{botherref}
\oauthor{\bsnm{Abramsky}, \binits{S.}}:
Temperley-{L}ieb algebra: From knot theory to logic and computation via quantum mechanics.
CoRR
\textbf{abs/0910.2737}
(2009)
\end{botherref}
\endbibitem

%%% 12
\bibitem[\protect\citeauthoryear{}{}]{GAP4}
\begin{botherref}
GAP -- Groups, Algorithms, and Programming, Version 4.10.0.
\url{https://www.gap-system.org}
\end{botherref}
\endbibitem

%%% 13
\bibitem[\protect\citeauthoryear{}{}]{OEIS}
\begin{botherref}
The On-Line Encyclopedia of Integer Sequences, id:A047974.
\url{http://www.research.att.com/~njas/sequences/A047974}
\end{botherref}
\endbibitem

%%% 14
\bibitem[\protect\citeauthoryear{Dolinka and East}{2017}]{IdemP}
\begin{barticle}
\bauthor{\bsnm{Dolinka}, \binits{I.}},
\bauthor{\bsnm{East}, \binits{J.}}:
\batitle{The idempotent-generated subsemigroup of the {Kauffman} monoid}.
\bjtitle{Glasgow Mathematical Journal}
\bvolume{59}(\bissue{3}),
\bfpage{673}--\blpage{683}
(\byear{2017})
\end{barticle}
\endbibitem

%%% 15
\bibitem[\protect\citeauthoryear{Lau and FitzGerald}{2006}]{LF}
\begin{barticle}
\bauthor{\bsnm{Lau}, \binits{K.W.}},
\bauthor{\bsnm{FitzGerald}, \binits{D.G.}}:
\batitle{Ideal structure of the {Kauffman} and related monoids}.
\bjtitle{Communications in Algebra}
\bvolume{34}(\bissue{7}),
\bfpage{2617}--\blpage{2629}
(\byear{2006})
\end{barticle}
\endbibitem

%%% 16
\bibitem[\protect\citeauthoryear{Kauffman}{2001}]{Kauff}
\begin{bbook}
\bauthor{\bsnm{Kauffman}, \binits{L.H.}}:
\bbtitle{Knots and Physics},
\bedition{3rd} edn.
\bpublisher{WORLD SCIENTIFIC}, \blocation{???}
(\byear{2001}).
\doiurl{10.1142/4256}
\end{bbook}
\endbibitem

%%% 17
\bibitem[\protect\citeauthoryear{East}{2021}]{East4}
\begin{barticle}
\bauthor{\bsnm{East}, \binits{J.}}:
\batitle{Presentations for temperley–lieb algebras}.
\bjtitle{The Quarterly Journal of Mathematics}
\bvolume{72}(\bissue{4}),
\bfpage{1253}--\blpage{1269}
(\byear{2021})
\doiurl{10.1093/qmath/haab001}
{\href{https://arxiv.org/abs/https://academic.oup.com/qjmath/article-pdf/72/4/1253/43803123/haab001.pdf}{{https://academic.oup.com/qjmath/article-pdf/72/4/1253/43803123/haab001.pdf}}}
\end{barticle}
\endbibitem

%%% 18
\bibitem[\protect\citeauthoryear{Pin}{1986}]{Pin}
\begin{bbook}
\bauthor{\bsnm{Pin}, \binits{J.E.}}:
\bbtitle{Varieties of Formal Languages}.
\bsertitle{Foundations of computer science}.
\bpublisher{North Oxford Academic}, \blocation{???}
(\byear{1986}).
\burl{https://books.google.com/books?id=LNwmAQAAIAAJ}
\end{bbook}
\endbibitem

%%% 19
\bibitem[\protect\citeauthoryear{Choffrut and Mercas}{2010}]{Contextual}
\begin{barticle}
\bauthor{\bsnm{Choffrut}, \binits{C.}},
\bauthor{\bsnm{Mercas}, \binits{R.}}:
\batitle{Contextual partial commutations}.
\bjtitle{Discrete Mathematics \& Theoretical Computer Science}
\bvolume{12},
\bfpage{59}--\blpage{72}
(\byear{2010})
\doiurl{10.46298/dmtcs.493}
\end{barticle}
\endbibitem

%%% 20
\bibitem[\protect\citeauthoryear{Nordahl and Scheiblich}{1978}]{*-semigroups}
\begin{barticle}
\bauthor{\bsnm{Nordahl}, \binits{T.E.}},
\bauthor{\bsnm{Scheiblich}, \binits{H.E.}}:
\batitle{Regular * semigroups}.
\bjtitle{Semigroup Forum}
\bvolume{16}(\bissue{1}),
\bfpage{369}--\blpage{377}
(\byear{1978})
\doiurl{10.1007/BF02194636}
\end{barticle}
\endbibitem

\end{thebibliography}

\end{document}